\documentclass[11pt,a4paper,reqno]{amsart}
\usepackage[a4paper,top=3cm,bottom=3cm,inner=3cm,outer=3cm,includehead,includefoot]{geometry}
\usepackage{amsmath,amssymb,amsthm,calc,verbatim,mathrsfs,paralist,tikz,url,cite}
\usetikzlibrary{shapes.misc,calc,intersections,patterns,decorations.pathreplacing}

\linespread{1.1}

\newtheorem{theorem}{Theorem}
\newtheorem{lemma}[theorem]{Lemma}

\newtheorem{definition}[theorem]{Definition}
\newtheorem{question}[theorem]{Question}

\def\E{\mathbb{E}}
\def\N{\mathbb{N}}
\def\P{\mathbb{P}}

\def\Z{\mathbb{Z}}
\newcommand{\bin}{\operatorname{Bin}}
\newcommand{\strongly}{G^S}
\newcommand{\good}{G}
\newcommand{\interior}{\operatorname{int}}
\newcommand{\given}{\, \big| \,}
\renewcommand{\lg}{\operatorname{long}}
\newcommand{\sh}{\operatorname{short}}
\newcommand{\crest}{\operatorname{Cr}}
\renewcommand{\epsilon}{\varepsilon}
\renewcommand{\leq}{\leqslant}
\renewcommand{\geq}{\geqslant}
\renewcommand{\to}{\rightarrow}

\title{The time of bootstrap percolation in two dimensions}

\author[P. Balister]{Paul Balister}
\address{Department of Mathematical Sciences, University of Memphis, Memphis, TN 38152, USA}
\email{pbalistr@memphis.edu}

\author[B. Bollob\'as]{B\'ela Bollob\'as}
\address{Department of Pure Mathematics and Mathematical Statistics, Wilberforce Road, Cambridge, CB3 0WA, UK, and Department of Mathematical Sciences, University of Memphis, Memphis, TN 38152, USA, and London Institute for Mathematical Sciences, 35a South Street, London, W1K 2XF, UK}
\email{b.bollobas@dpmms.cam.ac.uk}

\author[P.J. Smith]{Paul Smith}
\address{IMPA, 110 Estrada Dona Castorina, Jardim Bot\^{a}nico, Rio de Janeiro, 22460-320, Brazil}
\email{psmith@impa.br}

\date{\today}

\subjclass[2010]{Primary 60K35; Secondary 60C05}
\keywords{bootstrap percolation, concentration of measure}

\begin{document}

\begin{abstract}
We study the distribution of the percolation time $T$ of 2-neighbour bootstrap percolation on $[n]^2$ with initial set $A\sim\bin([n]^2,p)$. We determine $T$ up to a constant factor with high probability for all $p$ above the critical probability for percolation, and to within a $1+o(1)$ factor for a large range of $p$.
\end{abstract}

\maketitle

\section{Introduction}

The subject of this paper is bootstrap percolation, a type of two-state cellular automaton introduced by Chalupa, Leath and Reich in 1979 \cite{CLR} to model certain interacting particle systems in physics. In \emph{$r$-neighbour bootstrap percolation} on a graph $G$, vertices are either \emph{infected} or \emph{uninfected}, and the states of vertices evolve at discrete times according to the following process. At time $t=0$, there is an initial set $A\subset V(G)$ of infected vertices, and all other vertices in the graph are uninfected. Thereafter, at each discrete time, uninfected vertices become infected if at least $r$ of their neighbours are already infected, while infected vertices remain infected forever. Thus, we set $A_0=A$, and for each integer $t\geq 1$, the set of infected vertices at time $t$ is
\[
A_t := A_{t-1} \cup \big\{ v\in V(G): |N(v)\cap A_{t-1}| \geq r \big\},
\]
where $N(v)$ denotes the set of neighbouring vertices of $v$ in $G$. The graph $G$ is often taken to be $\Z^d$ or $[n]^d=\{1,\dots,n\}^d$, where in both cases edges are between vertices which differ by exactly $1$ in exactly one coordinate. We write $[A]=\cup_{t=0}^\infty A_t$ and call $[A]$ the \emph{closure} or \emph{span} of $A$. We say \emph{$A$ percolates $G$} if $[A]=V(G)$. Occasionally we use the notation $[X]_t$ to mean the set of infected vertices at time $t$ when the initial set is $X$. A subset $U$ of $V(G)$ is said to be \emph{internally spanned} if $U\subset[A\cap U]$.

Bootstrap percolation may be thought of as a monotone version of the Glauber dynamics of the Ising model, and it is here that many of its applications lie. For example, Fontes, Schonmann and Sidoravicius \cite{FSSIsing} and Morris \cite{MorrisGlauber} used results from bootstrap percolation to prove bounds on the critical threshold for fixation at the Gibbs state in the Ising model. Bootstrap percolation has also found applications in crack formation, clustering phenomena, dynamics of glasses \cite{GST}, sandpiles \cite{FLP}, jamming \cite{DGLBD}, and many other areas of statistical physics \cite{Adler,AdLev,ASA}.

Many of the most widely studied questions in bootstrap percolation ask what one can say about the properties of the system when the initial set is chosen randomly. By ``randomly'' here we mean that each vertex of $V(G)$ is included in $A$ independently with probability $p$; sometimes we say that $A$ is a \emph{$p$-random} subset of $V(G)$, and we write $A\sim\bin(V(G),p)$. One would like to know how likely percolation is to occur in this setting, as a function of the graph $G$, the infection parameter $r$, and the initial infection probability $p$. In the case of $r$-neighbour bootstrap percolation on the lattice graph $[n]^d$, with $d$ fixed and $n$ tending to infinity, it is known that there exists a sharp phase transition for percolation for all $2\leq r\leq d$. This means that there is a function $p_c=p_c([n]^d,r)$ such that for all $\epsilon>0$, if $p\leq(1-\epsilon)p_c$ then with high probability there is no percolation, while if $p\geq(1+\epsilon)p_c$ then with high probability there is percolation. The function $p_c$ is known as the \emph{critical probability} for percolation. A certain weaker form of this result was proved by Aizenman and Lebowitz \cite{AL} in 1988 for $r=2$, in a paper that started the study of the critical probability on finite graphs. The analogous results for $r\geq 3$ were proved considerably later: by Cerf and Cirillo \cite{CerfCir} for $r=3$ and Cerf and Manzo \cite{CerfManzo} for $r\geq 4$. The sharper form we have just stated has a similar history: in 2002, Holroyd \cite{Hol} proved that $p_c([n]^2,2)=(\pi^2/18+o(1))/\log n$; in 2009, Balogh, Bollob\'as and Morris \cite{BBM3D} established the sharp threshold for $r=3$; and the full result was proved by Balogh, Bollob\'as, Duminil-Copin and Morris \cite{BBDCM} in 2012. Sharp thresholds are also known to exist for several other bootstrap models, including the hypercube \cite{BBhyp,BBMhigh} and a number of other models on $\Z^2$ \cite{DCH,DCvE}. Moreover, recent work of Bollob\'as, Smith and Uzzell \cite{BSUgen} and Bollob\'as, Duminil-Copin, Morris and Smith \cite{BDCMS} shows that similar threshold behaviour, albeit in a weaker sense, is exhibited by a considerably larger class of two-dimensional bootstrap percolation processes. 

Given a graph $G$ and an initial infection probability $p$ such that percolation is likely to occur, one would also like to know how long percolation takes. Thus, letting $T$ denote the random variable $\min\{t:A_t=V(G)\}$, which we call the \emph{percolation time} of the set $A$, the question is to determine information about the distribution of $T$. In particular, how concentrated is $T$?

Before continuing, we mention that the study of \emph{typical} infection times on infinite lattices (our objects of study -- total infection times -- are quite different) has an extensive history (for just a small selection, see \cite{AL,Hol,BBDCM,AMS}) and these are by now well understood in a variety of settings. In particular, in all cases that have so far been studied, there has been shown to exist a close inverse relationship between typical occupation times on infinite lattices and critical probabilities for complete infection on the corresponding finite lattices. To illustrate this phenomenon, in the 2-neighbour model on $\Z^2$ with a $p$-random initial configuration, let $\tau$ be the minimal $t$ such that the origin is infected by time $t$. Then one result of \cite{Hol} is that $\tau = p_c^{-1}\big(p(1+o(1)\big)$ with high probability, where $p_c=p_c([n]^2,2)$ is the critical probability for percolation on $[n]^2$ as a function of $n$, and $p_c^{-1}$ is its inverse function. We emphasize again, however, that our object of study is quite different: we are interested in determining information about the \emph{total} infection time, rather than the \emph{typical} occupation time, and there does not seem to be a straightforward relationship between the two. Indeed, we do not use any results or techniques specifically related to the latter in our study of the former.

All known proofs of bounds for the critical probability in the various bootstrap percolation processes also give some (rather limited) information about the percolation time, although the bounds one can extract are never explicitly stated in these papers. (Of course, this is not surprising: the papers are not concerned with studying the percolation time.) For example, the methods in \cite{AL} and \cite{Hol} for proving that percolation is likely to occur in 2-neighbour bootstrap percolation on $[n]^2$ show that if $p\geq(1+\epsilon)p_c([n]^2,2)$ then $T\leq n(\log n)^{2+o(1)}$ with high probability as $n$ tends to infinity. (Actually, a simple adaptation of the proof of this statement can be used to show under the same conditions that the percolation time satisfies the stronger inequality $T=O(n\log n)$ with high probability, and we use this adaptation in the proofs of both main theorems in this paper.) From below, the bounds one can extract are even weaker: for example, again in the 2-neighbour model on $[n]^2$, all one can deduce from \cite{Hol} is that if $p=(1+\epsilon)p_c([n]^2,2)$ then with high probability $T\geq\E[\tau] = n^{1-o(1)}$ as $\epsilon\to 0$ (where $\tau$ is the occupation time of the origin (say) in $\Z^2$, as in the previous paragraph).

The only known sharp results about the time of percolation relate to the $r$-neighbour model on the torus $(\Z/n\Z)^d$ when $p$ is close to $1$. With such a large initial infection probability, and therefore such a small percolation time, one might expect the events that sites in $(\Z/n\Z)^d$ are uninfected at time $t$ to be approximately independent, and therefore the number of uninfected sites at time $t$ to be approximately Poisson distributed. Bollob\'as, Holmgren, Smith and Uzzell \cite{BHSU} ($d$-neighbour in $d$ dimensions) and Bollob\'as, Smith and Uzzell \cite{BSUr} ($r$-neighbour in $d$ dimensions) make this heuristic precise using extremal techniques and the Stein-Chen method. They show that if $p$ satisfies certain conditions depending on $t$ and $n$ (which in particular imply that $p=1-o(1)$), then with high probability the percolation time is exactly equal to $t$, or in some cases to either $t$ or $t+1$. A weaker statement, which follows from Theorem $1.3$ of \cite{BHSU} (after observing that the expression $m_t$ in that theorem is asymptotically $4t$ as $t\to\infty$), says that if $r=d=2$ and $\log\log n \ll \log 1/(1-p) \ll \log n$ then
\begin{equation}\label{eq:Tlargep}
T = \frac{(1+o(1))\log n}{2\log 1/(1-p)}
\end{equation}
with high probability. (We use the notation $f(n)\ll g(n)$ to mean that $g(n)/f(n)\to\infty$ as $n\to\infty$.) The condition $\log\log n \ll \log 1/(1-p)$ above corresponds to the upper bound on the time $t$ in the statement of Theorem $1.3$ in \cite{BHSU}, which is the natural limit of the Stein-Chen method. The condition $\log 1/(1-p)\ll\log n$ is required to ensure that the expression for $T$ in \eqref{eq:Tlargep} tends to infinity. For larger $p$ (that is, $p$ such that $\log n / \log 1/(1-p)$ is bounded), the theorems in \cite{BHSU,BSUr} instead show (in many cases) that the percolation time is equal to a constant with high probability as $n\to\infty$. Indeed, the \emph{aim} of \cite{BHSU,BSUr} was to establish one- and two-point concentration for the percolation time.

The first of the two main theorems in this paper says that the expression \eqref{eq:Tlargep} for the percolation time holds for a much broader range of sequences of initial infection probabilities: not only do we drop the condition $p=1-o(1)$, but in fact we only require $\liminf p\log\log n > \pi^2/9$. It is worth remarking that while the techniques of \cite{BHSU,BSUr} are broadly similar to each other, exploiting the near-independence of the states of sites (a property that we mentioned several paragraphs ago), the techniques of the present paper are fundamentally different. These new proof techniques, which we sketch in Section \ref{se:sketch}, together form one reason that we are able to relax the lower bound on $p$ quite so much compared with \cite{BHSU,BSUr}.

Throughout this paper we use $T$ to denote the percolation time of a $p$-random subset of $[n]^2$ under the 2-neighbour bootstrap percolation process. We also fix the constant $\lambda=\pi^2/18$; the reader may recall this constant appearing in the result of Holroyd \cite{Hol} on the critical probability $p_c([n]^2,2)$.
The following is our first theorem.

\begin{theorem}\label{th:large}
Let $0< p=p(n)<1 $ be such that $\liminf p\log\log n > 2\lambda$ and $1-p=n^{-o(1)}$ (that is, $\log 1/(1-p)\ll\log n$). Then
\[
T = \frac{(1+o(1))\log n}{2\log 1/(1-p)}
\]
with high probability as $n\to\infty$.
\end{theorem}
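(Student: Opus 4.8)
The plan is to establish $(1-\epsilon)m\leq T\leq(1+\epsilon)m$ with high probability for every fixed $\epsilon>0$, where $m:=\log n/\bigl(2\log\tfrac{1}{1-p}\bigr)$; throughout write $q:=1-p$. The two hypotheses are exactly what is needed to make this meaningful and provable: $1-p=n^{-o(1)}$ forces $m\to\infty$ and $m=n^{o(1)}$, while $\liminf p\log\log n>2\lambda$ puts $p$ strictly above $2\lambda/\log\log n$, which is Holroyd's critical probability for boxes of side $\sqrt{\log n}$; in particular $p$ lies far above $p_c([n]^2,2)=(\lambda+o(1))/\log n$, so $[A]=[n]^2$ and $T<\infty$ with high probability.

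\emph{The lower bound.} The key point is that two adjacent, entirely empty columns form a barrier that the $2$-neighbour process cannot cross from the side: if the $4R+2$ sites $\{c,c+1\}\times\{-R,\dots,R\}$ all avoid $A$, an easy induction on $s$ shows that $[A]_s$ is disjoint from $\{c,c+1\}\times\{-(R-s),\dots,R-s\}$ for $0\leq s\leq R$ (at each step a site $(c,j)$ with $|j|\leq R-s-1$ has at most one infected neighbour, namely $(c-1,j)$), and hence $(c,0)\notin[A]_R$. With $R=\lceil(1-\epsilon)m\rceil$, such a barrier is present with probability $q^{4R+2}=n^{-2(1-\epsilon)-o(1)}$; since $[n]^2$ contains order $n^2/m$ pairwise disjoint vertical strips of width $2$ and height $4R+2$, on which the barrier events are independent, with high probability at least one barrier occurs and then $T>R\geq(1-\epsilon)m$.

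\emph{The upper bound.} By a union bound over $x\in[n]^2$ it suffices to show $\P(x\notin[A]_{(1+\epsilon)m})=o(n^{-2})$. The heart of the matter is a deterministic converse to the barrier lemma: if $x\notin[A]_t$, then $A^c$ contains a thin connected ``barrier'' near $x$ — a connected union of pairs of parallel adjacent empty segments, lying in the $\ell^1$-ball of radius $t$ about $x$, with ``inradius'' at least $t$, so that the infection, unable to cross the barrier, needs time at least $t$ to reach $x$ from its ends — and any such barrier must omit at least $(4-o(1))t$ sites, a straight double column through $x$ being the cheapest shape and, up to negligible perturbations, the only one meeting this bound. Proving this is the main obstacle. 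It involves (i) ruling out slow \emph{small-scale} obstructions — and this is precisely where $\liminf p\log\log n>2\lambda$ enters, since it keeps $p$ well above Holroyd's threshold for boxes up to the critical droplet scale, which the hypothesis forces to be at most $\sqrt{\log n}$, so that a quantitative version of Holroyd's theorem together with the $O(N\log N)$ percolation-time bound noted in the introduction shows such features delay the infection by only $o(m)$ and may be absorbed into the error term; and (ii) enumerating the admissible barrier shapes to show that, beyond the straight double columns, they are too rare per omitted site of $A^c$ to matter. Granting this, straight double columns of length $\ell$ (necessarily $\ell\geq 2t-o(t)$) through a fixed $x$ number only $O(\ell)$, each omitted by $A$ with probability $q^{2\ell}$, so the total probability that some barrier forces $x\notin[A]_{(1+\epsilon)m}$ is at most $\mathrm{poly}(m)\cdot q^{(4-o(1))(1+\epsilon)m}=n^{-2(1+\epsilon)+o(1)}=o(n^{-2})$, completing the proof.
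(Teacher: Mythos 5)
Your lower bound is correct and is essentially the paper's Lemma~\ref{le:lb}: an initially empty $(2R+1)\times 2$ strip protects its central pair for $R$ steps, such a strip has probability $q^{4R+2}=n^{-2(1-\epsilon)+o(1)}$, and there are $\Theta(n^2/R)$ disjoint trials, so with high probability $T>R$. Fine. The upper bound, however, has a genuine gap: the entire argument is hung on a ``deterministic converse to the barrier lemma'' which you state, admit is ``the main obstacle,'' and then assume (``Granting this\dots''). That assumption is doing all the work. Moreover, as stated it is not a deterministic fact: if $x\notin[A]_t$ and $x$ is away from the boundary, what one can actually conclude deterministically is that there is a $2\times 2$ square of sites uninfected at time $t-2$ near $x$, and from each corner an up-right/up-left/down-right/down-left \emph{monotone lattice path} of $t-2$ initially uninfected sites; these four single-width wiggly paths (not a ``connected union of pairs of parallel adjacent empty segments'') are the structure one has to control. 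Individually each such configuration costs about $q^{4t}$, but there are about $2^{4t}$ choices of the four paths, so the naive union bound gives $(2q)^{4t}$, which is useless. The hard content of the theorem is precisely the argument that kills this entropy, and your proposal does not supply it.

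The paper's route is different in exactly the place you leave open. It sets up a renormalization: a critical grid size $K=\exp(\mu(p)/p)$ (Lemma~\ref{le:mu}), then $L=AK^2\log(1/q)$, and proves by a doubling recursion (Lemmas~\ref{le:etaineq} and~\ref{le:etaineq2}) that the probability an $L$-cell is ``bad'' (its interior not filled by time $BL/p$) is at most $\mathrm{poly}(L)\,q^{2L}$ --- a gain of a factor $q^{L}$ over the trivial $q^{L}$ coming from one empty edge. With this in hand, Lemma~\ref{le:upright} looks at the coarse up-right $L$-path traced by a single uninfected site-path: each $L$-cell traversed is either bad (gain $q^{2L}$, enough to beat the $2^{O(1)}$ choices per cell, via Lemma~\ref{le:ratio}) or only ``semi-good,'' in which case the fine path is confined to cell edges and one shows that consecutive semi-good cells force whole $2\times(L-2)$ buffers to be empty, again yielding roughly $q^{2L}$ per cell. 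Summing the resulting geometric series gives $\mathbb{P}(x\notin[A]_t)\leq\frac{16}{p}q^{t-t'}$ with $t'=O(K^2\log(1/q)/p)=o(m)$, and the hypothesis $\liminf p\log\log n>2\lambda$ enters exactly to guarantee $t'\ll m$. Your intuition about where the hypothesis is used and about the $q^{4t}$ target is correct, but without some version of this renormalization and the entropy-vs-probability comparison (or an alternative mechanism of comparable strength), the upper bound is not proved.
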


A natural example of an event that would prevent percolation happening by time $t$ is the existence of an empty $(2t+1)\times 2$ rectangle in the initial set $A$. (Such a rectangle with a site missing at either end would also suffice, but since we are only interested in determining $T$ asymptotically, we do not need to be that precise.) One can easily show that the largest $t$ for which such a rectangle is likely to exist is about $(\log n) / \big(2\log 1/(1-p)\big)$. This observation essentially proves the lower bound of Theorem \ref{th:large}; the real content of the theorem is therefore the upper bound.

A detailed sketch of the proof of Theorem \ref{th:large} is given in Section \ref{se:sketch}. Here we mention only briefly that the proof proceeds by showing that the event that a site $x$ is uninfected at time $t$ implies (deterministically) the existence of a ``path'' of $L\times L$ squares near to $x$, each of which is not internally spanned. For an appropriately optimized value of $L$ (and hence length of the ``path''), the probability of the latter event can be shown to be small. This summary of the proof is so brief as to be positively misleading, however: for example, the event that the $L\times L$ squares are internally spanned turns out to be too strong (in the sense that it is not quite necessary for the event whose probability we are trying to bound, and in fact its probability is vastly different), and so a weakening of this is required, which leads to considerable complications. Once again, we refer the reader to Section \ref{se:sketch} for a more comprehensive outline of the proof.

Our second theorem establishes the percolation time up to a constant factor with high probability in the remaining case, when $p$ is supercritical but small. When $p$ is in the range of Theorem \ref{th:large} (we shall call this the ``large $p$ regime''), the percolation time is with high probability asymptotically equal to one half of the length of the longest initially empty double row or column. When $p$ is in the range of Theorem \ref{th:small}, the percolation time is with high probability much larger than the length of the longest initially empty double row or column. We shall call this  the ``small $p$ regime'', although sometimes we shall reserve this phrase for the special case when $\liminf p\log\log n$ is strictly less than $2\lambda$.

\begin{theorem}\label{th:small}
There exists a function $\mu:(0,1)\to(0,\infty)$, with $\mu(p)=\lambda+o(1)$ as $p\to 0$, such that the following holds. Let $p=p(n)$ be such that $\liminf p\log n>\lambda$ and $p=o(1)$. Then
\begin{equation}\label{eq:small}
T = \Theta\big( t(n,p) \big)
\end{equation}
with high probability as $n\to\infty$, where
\begin{equation}\label{eq:small2}
t(n,p) := \max\left\{ \sqrt{\frac{\log n - \mu(p)/p}{p}} \exp\left(\frac{\mu(p)}{p}\right) \, , \, \frac{\log n}{p} \right\}.
\end{equation}
\end{theorem}

Let $t_1(n,p)$ denote the first of the two functions inside the maximum in \eqref{eq:small2} and let $t_2(n,p)=(\log n)/p$ denote the second. If $\limsup p\log\log n<2\lambda$ then $t_1(n,p)\gg t_2(n,p)$, while if $\liminf p\log\log n>2\lambda$ then $t_1(n,p)\ll t_2(n,p)$. Thus, as $p$ becomes small, the point at which $t_1(n,p)$ starts to become larger than $t_2(n,p)$ (and thus $T=\Theta\big(t_1(n,p)\big)$ wth high probability) occurs precisely at the point at which the conditions for Theorem \ref{th:large} fail. Thus, for almost the entire range of $p$ for which Theorem \ref{th:small} applies but Theorem \ref{th:large} does not, the theorem says that $T=\Theta\big(t_1(n,p)\big)$ with high probability, and therefore this result is the main content of Theorem \ref{th:small}. However, at the transition itself, when $p=2\lambda/\log\log n$, it is not possible to say which of the two functions is larger without knowing more about the function $\mu(p)$, so it is not possible to omit the function $t_2(n,p)$ from Theorem \ref{th:small}.

The nature of the $o(1)$ term in the function $\mu(p)$ is dependent on the second and higher order terms in the critical probability $p_c([n]^2,2)$. The precise definition of $\mu(p)$ is given in Lemma \ref{le:mu}, but roughly speaking it is such that the probability that a grid of side length $\exp\big(\mu(p)/p\big)$ contains an internally spanned ``critical droplet'' (that is, a droplet of side length approximately a power of $1/p$; a precise definition is given in Section \ref{se:defs}) is equal to $1/2+o(1)$. (The definition in Lemma \ref{le:mu} includes an extra condition, which is needed for technical reasons.) Unfortunately, even with the recent result of Morris \cite{MorrisSharp} identifying the second order term in $p_c([n]^2,2)$ up to a constant factor, it is only possible to say that $|\mu(p)-\lambda|$ is at most $O(\sqrt{p})$. Thus, since $e^{c/\sqrt{p}}\gg \sqrt{(\log n)/p}$ for small enough $p$ and constant $c$, the main feature of Theorem \ref{th:small} is the assertion that there \emph{exists} a function $t(n,p)$ such that \eqref{eq:small} holds, not the formula for $t(n,p)$ in \eqref{eq:small2}. (In particular, when $p=(1+\epsilon)p_c([n]^2,2)$, all that can be said about the function $t(n,p)$ in Theorem \ref{th:small} is that it is equal to $n^{1-o(1)}$ as $\epsilon\to 0$. Of course, this is not the point of the theorem.)

Holroyd \cite{Hol} proved that the condition $\liminf p\log n>\lambda$ ensures that the initial set percolates with high probability, and that the condition $\limsup p\log n<\lambda$ ensures that with high probability the initial set does not percolate. It is natural to ask whether the conclusion of Theorem \ref{th:small} holds conditioned only on percolation occurring, dropping the assumption that $\liminf p\log n>\lambda$. However, this is not the case. When $p\approx\lambda/\log n$, the probability of percolation is roughly constant and the number of critical droplets is approximately Poisson distributed. Thus, if percolation does occur, then the percolation time will depend on the number of critical droplets and their relative positions.

As for Theorem \ref{th:large}, a detailed sketch of the proof of Theorem \ref{th:small} is given in Section \ref{se:sketch}, and so here we describe only the fundamental approach. The idea is, for $\liminf p/p_c([n]^2,2) > 1$, to find a maximal area of the grid not containing an internally spanned critical droplet (we call this a ``sparse'' region), and to bound from below the time it takes for this region to fill, assuming that every site outside the region is initially infected. The percolation time for the whole grid is then clearly at least this time. In order to obtain the required bound on the time for the sparse region to fill, we undertake a detailed study of the geometry of such regions of the grid. As far as the authors are aware, this work represents the first systematic study of the geometry of regions of the grid \emph{not} containing internally spanned critical droplets.

In the next section we sketch some of the main ideas that go into our proofs of Theorems \ref{th:large} and \ref{th:small}. In Section \ref{se:defs} we recall some standard notation and lemmas from bootstrap percolation and we introduce some new notation specifically related to the percolation time. In Section \ref{se:critgrids} we make formal the notion and properties of a ``critical grid size'', which is a function $K=K(p)$ such that the probability a $p$-random subset of $[K]^2$ percolates is approximately constant. This may be thought of as an inverse to the problem of determining the critical probability, which is a function $p_c=p_c(n)$ such that the probability a $p_c$-random subset of $[n]^2$ percolates is approximately constant. The proof of Theorem \ref{th:large} including the method of tiling with $L$-cells is then given in Section \ref{se:large}, and finally Sections \ref{se:uppersmall} and \ref{se:lowersmall} contain the proofs of the upper and lower bounds of Theorem \ref{th:small} respectively.

\section{Sketch of proofs}\label{se:sketch}

We now sketch some of the most important details of the proofs of Theorems~\ref{th:large} and~\ref{th:small}. We emphasize again that there is essentially no overlap between the proofs in the present paper and those from earlier works on the time of bootstrap percolation \cite{BHSU,BSUr}. Here, our only tools are basic properties of the 2-neighbour bootstrap percolation model on $[n]^2$ (such as the ``rectangles process'' and the notion of a ``critical droplet'', which are explained below), results of Holroyd \cite{Hol} on $p_c([n]^2,2)$, and a lemma from graph theory (Lemma \ref{le:coffeetime}).

\subsection{Sketch of the proof of Theorem \ref{th:large}}

As mentioned in the introduction, the lower bound of Theorem \ref{th:large} is no more than the trivial assertion that there exists a $(2t+1) \times 2$ rectangle that does not contain any initially infected sites, where
\[
t = \frac{(1-\epsilon)\log n}{2 \log 1/(1-p)}
\]
and $\epsilon>0$. The real interest in Theorem \ref{th:large} therefore lies in the upper bound, which may be thought of as saying that, in this range of the initial infection probability $p$, initially uninfected $(2t+1)\times 2$ blocks of sites are the only obstacles to percolation by time $t$.


Suppose a site $x$ in $[n]^2$ is uninfected at time $t$. It is easy to see that $x$ must be contained in a $2\times 2$ square of uninfected sites at time $t-2$. In fact, provided $x$ is not too close to the boundary of $[n]^2$, it is easy to see that there must exist a sequence of $t-1$ initially uninfected sites, starting with the top-right site in the $2\times 2$ square, and continuing either up or right each time, and that a similar statement, with the correct mix of up/down and left/right, also holds for the three other sites in the $2\times 2$ square. We would like to show that by far the most probable way for this to occur is for these four paths to be aligned to form a $(2t-2)\times 2$ rectangle, or more specifically, we would like to show that the probability the four uninfected paths exist is not much more than $(1-p)^{4t-4}$, which is just the probability that a given $(2t-2)\times 2$ rectangle is initially empty.

A first attempt at a proof might go as follows. Assume that all four paths of uninfected sites start by growing out horizontally from the $2\times 2$ square, so that they form an uninfected rectangle of height $2$ and unknown length. Let us concentrate on the top-right path, which we call $P$. If the path ever strays away from the horizontal line it starts along, then that should be at the cost of many new uninfected sites, because a path of uninfected sites that contains corners is not closed. The trouble is that there are too many choices for the paths, so the cost of this gain in probability is a large combinatorial factor.

However, it is possible to salvage this attempt at a proof. Rather than counting top-right paths of sites individually, we look at the intersection of top-right paths with a much coarser grid of squares, of a certain side length $L$, and count these. First we allow an initial time $t'=BL/p=o(t)$, where $B$ is a constant. By this time we expect nearly all internally spanned squares of side length $L$ --- which we call \emph{$L$-cells} --- to have filled. Now consider just the first $t-t'$ sites in the top-right path $P$: at time $t'$ they are still uninfected, and they intersect a path of $L$-cells all of which are either not internally spanned or slow to fill; we call such $L$-cells \emph{bad}. There is now an optimization question: how large should $L$ be to minimize the probability of this event, that there is a path of bad $L$-cells? In order to have any hope of this method working, the probability that an $L$-cell is bad should be at most $(1-p)^{(1+c)L}$, for some $c>0$. This is because we would like to show that the probability there exists a path of bad $L$-cells is about $(1-p)^t$, so we need the additional $c$ to overcome the combinatorial factor that comes from taking a union bound over all paths. Thus, $L$ must be large enough for the probability that an $L$-cell is bad to be small. Another reason $L$ should be large is to minimize the combinatorial factor. As $L$ increases, there are fewer paths of $L$-cells inside a square of side length $t-t'$, so the combinatorial factor decreases. On the other hand, $L$ cannot be too large, because the error time $t'=BL/p$ must be $o(t)$. The $L$ that we choose is the smallest $L$ such that the probability an $L$-cell is bad is approximately $(1-p)^{(1+c)L}$. (In fact we take $c=1$.)

This second attempt at a proof is also not quite right: the probability that an $L$-cell is bad, as we have defined it, is at least $(1-p)^L$ because if any of the four edges of the square is empty then the square cannot be internally spanned. On the other hand we have said that the probability needs to be at most $(1-p)^{(1+c)L}$, so our definition of bad cannot be the right one. The way around this is as follows. One can show that, at the scale we are considering, empty edges of the $L$-cell are the only first order obstructions to being internally spanned, and that by strengthening the definition of bad so that an $L$-cell is only bad if it is not internally spanned except possibly for one or more of its edges, then the probability that an $L$-cell is bad now correlates with $(1-p)^{2L}$. While this gives the desired probability bound, it is no longer true that the original path $P$ of uninfected sites intersects a long path of bad $L$-cells, because $P$ may intersect only the edges of one or more of the $L$-cells. However, these paths are so restricted that they contribute little combinatorially to the union bound.

\subsection{Sketch of the proof of Theorem \ref{th:small}}

Blocking sets in the large $p$ regime are just (approximately) empty $(2t+1)\times t$ rectangles in the initial set $A$. In the small $p$ regime, blocking sets are much less straightforward. Loosely speaking, they are large, sparse regions of $A$. Before we say what we mean by ``sparse'' (and ``large''), we need to introduce the notion of a critical droplet. In bootstrap percolation on $[n]^2$ (and similar statements hold for other lattice grids in other dimensions) it is known that there is a threshold length, roughly at a power of $\log n$, such that the existence of an internally spanned rectangle with perimeter at least this length is enough to ensure percolation of the whole grid with high probability. Rectangles of this perimeter are known as \emph{critical droplets}. (There is a formal definition, which we give in the next section.) The sparse regions of $A$ that act as blocking sets in the small $p$ regime are maximal regions of the grid not containing an internally spanned critical droplet.

There are two parts to the proof of the lower bound in Theorem \ref{th:small}. First, we determine the size and shape of these maximal sparse regions. For this we use many of the same tools as we use in the large $p$ regime. Second, we show that the sparse regions percolate slowly, even under the additional assumption that the rest of the grid is initially full. The principal technical difficulty lies in showing that the spread of infection through the sparse regions occurs at the speed one would expect. This is the main result of Sections \ref{se:waves}, \ref{se:restrictions} and \ref{se:slowperc}. If the sparse region is infected quickly then we may ask how the information travelled from the edge of the sparse region to the centre. We show that there must exist a sequence of internally spanned rectangles located much closer together than one would expect, and that, in a certain sense, these droplets join the edge of the sparse region to the centre. Such a sequence of rectangles, which is defined formally in Section \ref{se:waves}, is called a \emph{wave}. We bound the number of waves in terms of the size of the sparse region and the time it takes the region to become infected, and we also bound the probability that any given wave exists. A more detailed sketch of the proof is given at the beginning of Section \ref{se:lowersmall}.

The upper bound of Theorem \ref{th:small} is the easier of the two bounds, and is proved in Section \ref{se:uppersmall}. For the upper bound in the large $p$ regime we focus on squares of side length $L$. In the small $p$ regime we do something similar, although we work at a different scale $M$, which is related to the quantity on the right-hand side of \eqref{eq:small}. We tile the grid $[n]^2$ with $M$-cells and wait an initial time $BM/p$, where again $B$ is a constant. As in the large $p$ regime, we expect most $M$-cells to have internally spanned by this time, and we call those that have not \emph{weakly bad} (\emph{weakly} here emphasizes that the property is weaker than that of being bad, because, unlike in the large $p$ regime, we only require that the whole cell, including its edges, is not internally spanned by time $BM/p$). The proof then uses a graph theoretic lemma that bounds the number of order $k$ connected induced subgraphs of a graph $G$ containing a specific vertex in terms of $k$ and the maximum degree of $G$. This lemma allows one to say that the largest connected component of weakly bad $M$-cells is not too large --- in fact, the total area of the component is likely to be equal (to within a constant factor) to the area of the largest sparse region of the grid, where, as before, sparse means ``not containing an internally spanned critical droplet''. Finally we observe that any component of weakly bad $M$-cells is infected by the surrounding cells in time proportional to its size.

\section{Definitions and tools}\label{se:defs}

The first few definitions we need are used throughout the bootstrap percolation literature. Recall that a set $X\subset[n]^2$ is \emph{internally spanned} if $X\subset[X\cap A]$, where $A$ is (as always) the initial set. The set $X$ is \emph{empty} if $A\cap X=\emptyset$, it is \emph{occupied} if $A\cap X\neq\emptyset$, and it is \emph{full} if $A\cap X=X$. A \emph{droplet} is a rectangular subset of $[n]^2$ of the form
\[
D = [(a,b),(c,d)] := \big\{ (x,y)\in\Z^2 \, : \, a\leq x\leq c, \, b\leq y\leq d \big\}.
\]
The \emph{dimensions} of $D$ are $\dim(D)=(c-a+1,d-b+1)$, the \emph{long} and \emph{short side-lengths} of $D$ are respectively $\lg(D)=\max\{c-a+1,d-b+1\}$ and $\sh(D)=\min\{c-a+1,d-b+1\}$, and the \emph{semi-perimeter} of $D$ is $\phi(D)=\lg(D)+\sh(D)$. An \emph{$m$-cell} is a droplet $D$ with $\lg(D)=\sh(D)=m$. The \emph{interior} of an $m$-cell $D=[(a,b),(c,d)]$ is the $(m-2)$-cell $\interior(D)=[(a+1,b+1),(c-1,d-1)]$ and the \emph{edge} of $D$ is the set $\partial D = D\setminus\interior(D)$. The \emph{left edge} of $D$ is the set $[(a,b),(a,d)]$, and the \emph{right}, \emph{top} and \emph{bottom} edges of $D$ are defined similarly.

The concept of a critical droplet was mentioned briefly in the introduction, in the context of blocking sets in the small $p$ regime. Here we make that notion precise. Let $\gamma(p)=p^{-3}$. A \emph{critical droplet} is a droplet $D$ for which $\gamma(p)/2\leq\phi(D)\leq\gamma(p)$. The event that a set $X\subset[n]^2$ contains an internally spanned critical droplet is written $\Gamma(X)$. For brevity, we shall usually write $\gamma$ for $\gamma(p)$ and $\Gamma(n)$ for $\Gamma([n]^2)$.

The next few definitions relate specifically to the time of percolation. The event that the set $X$ is internally spanned is written $I(X)$. The event that $[X]_t=X$ (that is, that the set $X$ is internally spanned by time $t$) is denoted $I_t(X)$. The $m$-cell $D$ is \emph{strongly good} if it is internally spanned by time $Bm/p$, where $B$ is a sufficiently large absolute constant and $A\sim\bin(D,p)$ is the initial set. Thus, $D$ is strongly good if $I_{Bm/p}(D)$ occurs. It is \emph{good} if its span by time $Bm/p$ contains $\interior(D)$. Formally, $D$ is good if $\interior(D)\subset[D\cap A]_{Bm/p}$. Finally, $D$ is \emph{semi-good} if it is good but not strongly good, \emph{weakly bad} if it is not strongly good, and \emph{bad} if it is not good. We write $\strongly(D)$ for the event that $D$ is strongly good and $\good(D)$ for the event that $D$ is good. We also use $\strongly$ and $\good$ for the associated indicator functions. Let $\eta_m$ be the probability that an $m$-cell is bad and $\theta_m$ the probability that an $m$-cell is weakly bad; thus, for an $m$-cell $D$,
\[
\eta_m = \P_p\big(\good(D)^c\big) \quad \text{and} \quad \theta_m = \P_p\big(\strongly(D)^c\big).
\]

One of the fundamental tools in the study of bootstrap percolation is the \emph{rectangles process}, an algorithm which exactly describes the evolution of the 2-neighbour bootstrap process on $[n]^d$, but in a way which does not preserve infection times of sites. The algorithm was first observed by Aizenman and Lebowitz (\cite{AL}, Lemma 1), who used it to prove a lower bound for the critical probability of 2-neighbour bootstrap percolation on $[n]^d$. The algorithm runs as follows. First, consider each initially infected site to be a droplet with dimensions $(1,1)$. Then repeat the following process: whenever there are two droplets $D_1$ and $D_2$ and sites $x_1\in D_1$ and $x_2\in D_2$ with $\|x_1-x_2\|_1\leq 2$, replace $D_1$ and $D_2$ by the smallest droplet containing both. (Observe that $D_1$ and $D_2$ need not be disjoint, and they may even be nested.) If two such droplets do not exist, stop the algorithm. The set of sites contained in the final configuration of rectangles is precisely the closure of the initial set.

It may seem strange that an algorithm which is not able to encode the times at which sites become infected should be useful for proving results about the time of percolation, but its importance lies in the following lemma, due to Aizenman and Lebowitz. The lemma says that if a droplet is internally spanned then it must also contain internally spanned droplets at all smaller scales. 

\begin{lemma}\label{le:AL}
Let $D$ be an internally spanned droplet. Then for all $1\leq k\leq\lg(D)/2$ there exists an internally spanned droplet $D'\subset D$ such that $k\leq\lg(D')\leq 2k$.
\end{lemma}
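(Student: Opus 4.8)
The plan is to run the rectangles process on $D\cap A$ and track the evolution of the droplet that eventually becomes all of $D$. Since $D$ is internally spanned, the rectangles process started from the singletons in $D\cap A$ terminates in a single droplet equal to $D$ (more precisely, a configuration whose union is $[D\cap A]=D$, and since $D$ is itself a droplet the final configuration must be a single droplet which is $D$ — or at least contains a droplet whose span is $D$; it is cleanest to observe that the span of the union of the final droplets is $D$, so after further merges we may take it to be the single droplet $D$). Trace backwards, or equivalently forwards, the sequence of merges that produce $D$: at each step the current droplet $D'$ has $\lg(D')$ increasing from $1$ up to $\lg(D)\geq 2k$, and crucially each single merge step can only increase the long side-length by a bounded amount.

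The key quantitative point is the \emph{Aizenman--Lebowitz growth bound}: when two droplets $D_1,D_2$ are merged into the smallest droplet $D_3$ containing them, and they satisfy $\|x_1-x_2\|_1\leq 2$ for some $x_1\in D_1$, $x_2\in D_2$, one has $\lg(D_3)\leq\lg(D_1)+\lg(D_2)+2$, and hence $\lg(D_3)\leq 2\max\{\lg(D_1),\lg(D_2)\}+2$. So along the chain of merges producing $D$, consider the sequence of long side-lengths of the ``active'' droplet (the one that is an ancestor of the final $D$). It starts at $1$, ends at $\lg(D)\geq 2k$, and each step at most roughly doubles it (and adds at most $2$). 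Therefore there is some intermediate droplet $D'$ in this chain with $k\leq\lg(D')\leq 2k$: formally, let $D'$ be the last ancestor droplet in the merge chain with $\lg(D')<k$ — wait, rather let $D'$ be the first ancestor with $\lg(D')\geq k$; then its immediate predecessor $D''$ had $\lg(D'')<k\leq\lg(D)$ (such a predecessor exists since $\lg(D)\geq k$ forces at least one merge past the threshold, using $k\geq 1$ so that singletons have long side $1\leq k$ — here one should split off the trivial case $k=1$ where a singleton from $D\cap A$ works, noting $D$ is internally spanned so occupied), and by the growth bound $\lg(D')\leq 2\lg(D'')+2\leq 2(k-1)+2=2k$. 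Finally $D'\subset D$ because every droplet ever produced by the rectangles process inside $D\cap A$ is contained in $[D\cap A]=D$, and $D'$ is internally spanned because it is produced from a subset of $D\cap A$ by merges, so $D'\subset[D'\cap A]$.

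The main obstacle — really the only subtlety — is making precise the claim that the rectangles process started from $D\cap A$ produces, along the way, a \emph{single} droplet at each relevant scale that is an ancestor of $D$, and verifying the growth inequality $\lg(D_3)\leq 2\max\{\lg(D_1),\lg(D_2)\}+2$ with the right constant so that one lands in $[k,2k]$ rather than overshooting. One has to be slightly careful that the rectangles process is nondeterministic (the order of merges is not specified), but that is fine: we only need \emph{one} valid run, namely any run that terminates, and the Aizenman--Lebowitz theorem guarantees every run terminates with union equal to $[D\cap A]$. It is also worth noting that the bound $2k$ (rather than something like $4k$) comes precisely from choosing $D'$ to be the \emph{first} droplet crossing the threshold $k$, so its predecessor is still below $k$; choosing it the other way around would give the weaker bound. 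I would present the proof in essentially this order: reduce to $k\geq 2$; state the growth lemma for a single merge; run the process; extract the first threshold-crossing ancestor; conclude.
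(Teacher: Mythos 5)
Your proof is correct and takes essentially the same route as the paper's: both run the rectangles process on $D\cap A$ and use the growth bound for a single merge (the paper states the slightly sharper $\lg(D_3)\le\lg(D_1)+\lg(D_2)+1$, versus your $+2$, but either suffices) to extract the first droplet in the merge history crossing the threshold $k$. The only cosmetic imprecision is that you speak of a singular ``immediate predecessor $D''$'' when a merged droplet has two parents; for the bound $\lg(D')\le 2\max\{\lg(D_1),\lg(D_2)\}+2\le 2k$ one needs both parents of $D'$ to have long side $<k$, which does hold under your ``first in creation order'' reading since both parents are earlier-created ancestors of $D$.
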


The proof is immediate from the algorithm: if $D$ is the smallest droplet containing $D_1$ and $D_2$, and if $D_1$ and $D_2$ are close enough to be merged in the rectangles process, then it is easy to see that $\lg(D)\leq\lg(D_1)+\lg(D_2)$+1.

Another immediate and important consequence of the rectangles process (although there are many other ways of proving it) is the following lemma, the last of this section. A proof can be found in \cite[pp.~104--105]{CiM}.

\begin{lemma}\label{le:phi}
Let $D$ be a droplet internally spanned by a set $A$. Then $|A|\geq\phi(D)/2$.\qed
\end{lemma}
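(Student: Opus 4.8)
The plan is to run the \emph{rectangles process} of Aizenman and Lebowitz and to monitor a monotone potential, namely the sum of the semi\nobreakdash-perimeters of the droplets in the current configuration. First we may assume $A\subseteq D$: replacing $A$ by $A\cap D$ can only decrease $|A|$, and since a droplet is closed under the bootstrap rule we have $[A\cap D]\subseteq[D]=D$, while $D\subseteq[A\cap D]$ by hypothesis; thus $[A\cap D]=D$, and it suffices to prove $|A|\geq\phi(D)/2$ when $A\subseteq D$, in which case $[A]=D$.

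The crux is a sub-additivity estimate for merges: if two droplets $D_1,D_2$ are merged in the rectangles process into the minimal droplet $D'$ containing both, then $\phi(D')\leq\phi(D_1)+\phi(D_2)$. To prove it, write $D_i=[(a_i,b_i),(c_i,d_i)]$, put $(w_i,h_i)=\dim(D_i)$, so $\phi(D_i)=w_i+h_i$, and let $(W,H)=\dim(D')$. By the definition of a merge step there are $x_i=(p_i,q_i)\in D_i$ with $\delta_x+\delta_y\leq 2$, where $\delta_x=|p_1-p_2|$ and $\delta_y=|q_1-q_2|$. The width $W$ is the length of the smallest integer interval containing $[a_1,c_1]\cup[a_2,c_2]$, and $p_i\in[a_i,c_i]$. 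If these two intervals overlap then $W\leq w_1+w_2-1\leq w_1+w_2+\delta_x-1$; otherwise they are disjoint, the number of integers strictly between them is at most $\delta_x-1$ (since $p_1,p_2$ lie in the two intervals and are at distance $\delta_x$), so again $W\leq w_1+w_2+\delta_x-1$. By the same argument $H\leq h_1+h_2+\delta_y-1$, and adding these two bounds and using $\delta_x+\delta_y\leq 2$ yields $\phi(D')=W+H\leq\phi(D_1)+\phi(D_2)$.

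Given the estimate, the lemma follows at once. Starting the rectangles process from $A$, each of the $|A|$ initial sites is a $1\times1$ droplet of semi-perimeter $2$; let $\Phi$ be the sum of the semi-perimeters of the droplets currently present. Then $\Phi=2|A|$ at the outset, and by the estimate $\Phi$ does not increase at any merge step. When the process halts, the union of the surviving family $\mathcal{F}$ of droplets equals $[A]=D$; projecting onto the horizontal axis, the $x$-intervals of the droplets of $\mathcal{F}$ cover the $x$-interval of $D$, so the sum of their widths is at least the width of $D$, and likewise the sum of their heights is at least its height. Hence $\phi(D)\leq\sum_{D'\in\mathcal{F}}\phi(D')=\Phi\leq 2|A|$, which is what we wanted.

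The only real content is the sub-additivity estimate, and the delicate point there is the trade-off whereby a possible surplus of $1$ in one coordinate is absorbed by a deficit of $1$ in the other; this is precisely where the bound $\|x_1-x_2\|_1\leq 2$ built into the rectangles process is used, and it is the part of the argument most prone to an off-by-one slip. Everything else — the monotonicity of $\Phi$, the reduction to $A\subseteq D$, and the final projection bound — is routine, so I do not expect any serious obstacle beyond getting the coordinate bookkeeping exactly right.
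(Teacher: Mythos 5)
Your proof is correct, and it is the standard rectangles-process argument that the paper has in mind: the paper explicitly frames this lemma as ``an immediate and important consequence of the rectangles process,'' and your sub-additivity estimate $\phi(D')\leq\phi(D_1)+\phi(D_2)$ combined with the monotone potential $\Phi$ is precisely that argument. (A very minor remark: since the final droplets are pairwise at $\ell_1$-distance more than $2$ and their union is the connected rectangle $D$, the halting configuration is in fact the single droplet $D$, so your closing projection step, while perfectly valid, is not strictly needed.)
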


Finally, on a matter of notation, we remark that throughout the paper $c$ and $C$ will always denote absolute positive constants. To avoid accumulating notation we shall frequently reuse both $c$ and $C$ to mean different positive constants, occasionally even doing so inside a proof.

\section{Critical grid sizes and the inverse of the critical probability}\label{se:critgrids}

The problem of finding the critical probability for 2-neighbour bootstrap percolation on $[n]^2$ (and similarly for other models of bootstrap percolation) can be thought of as that of finding $p$ as a function of $n$ (which is assumed to be large) such that a $p$-random initial subset of $[n]^2$ has approximately a constant probability of percolating. In this paper we make extensive use of pairs $(n,p)$ with this property, but here we require $n$ to be a function of $p$, rather than the other way around. Thus our problem is essentially that of finding the inverse of the critical probability, which we think of as the ``critical grid size''. That sounds easy enough, but for our applications we require a slightly stronger property than a constant probability of percolating: we require $n$ as function of $p$ (which need not be small -- this is another small technicality) such that a $p$-random initial subset $A$ of $[n]$ has the following two properties. First, that the probability $[n]^2$ is strongly good (that is, that $A$ spans $[n]^2$ in time at most $Bn/p$) is at least a small positive constant. Second, that the probability $A$ contains an internally spanned critical droplet is at most a slightly larger positive constant. On the surface these two properties seem to be very different, so it is not obvious that such an $n$ should exist.

Our first lemma shows that if $n$ is sufficiently large and $p$ is such that a $p$-random initial subset of $[n]^2$ contains an internally spanned critical droplet with probability at least a constant, then with probability only a very slightly smaller, $[n]^2$ is strongly good (with initial set $A$). The proof is a minor adaptation of the deduction of Theorem 1 (i) from Theorem 2 (i) in \cite{Hol}.

\begin{lemma}\label{le:holroydplus}
Let $\alpha,p,\epsilon\in(0,1)$ and let $n\in\N$ be sufficiently large. Suppose that $\P_p\big(\Gamma(n)\big)\geq\alpha$. Then $\P_p\big(I_{6n/p}([n]^2)\big)\geq(1-\epsilon)\alpha$.
\end{lemma}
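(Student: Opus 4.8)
The plan is to follow Holroyd's deduction of percolation from the presence of a critical droplet, while keeping track of the time. It suffices to prove
\[
\P_p\big(\Gamma(n)\setminus I_{4n/p}([n]^2)\big)\le\epsilon\alpha,
\]
since then $\P_p\big(I_{4n/p}([n]^2)\big)\ge\P_p\big(\Gamma(n)\big)-\epsilon\alpha\ge(1-\epsilon)\alpha$, using the hypothesis $\P_p(\Gamma(n))\ge\alpha$. On the event $\Gamma(n)$ there is an internally spanned critical droplet $D$, so $\phi(D)\ge p^{-3}/2$ and hence $\lg(D)\ge p^{-3}/4$; I will show that such a $D$ almost always grows out to cover $[n]^2$ within time $4n/p$.

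Fix a critical droplet $D$ and grow it through a nested sequence $D=D_0\subsetneq D_1\subsetneq\dots\subsetneq D_m=[n]^2$, where $D_{i+1}$ is obtained from $D_i$ by adjoining one further row or column on a side adjacent to a longest edge of $D_i$. Then $\lg(D_i)$ is non-decreasing, so every adjoined layer has length at least $\lg(D)\ge p^{-3}/4$; growing in this order is what makes the argument insensitive to $D$ being thin. An elementary cascade shows that if each adjoined layer is occupied then $D_{i+1}$ is internally spanned whenever $D_i$ is, and hence $[n]^2$ is internally spanned. For the time bound I would invoke the adaptation of Holroyd's growth estimate mentioned in the introduction — equivalently, that a large internally spanned droplet advances through a $p$-random environment at speed $\Theta(p)$ — to conclude that, once $D$ is internally spanned (which takes at most $p^{-O(1)}$ steps), the infection reaches the boundary within a further $O(n/p)$ steps, hence within $4n/p$ in all, once $n$ is large. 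Consequently, if $\Gamma(n)$ holds but $I_{4n/p}([n]^2)$ fails, then every internally spanned critical droplet has, in its growth sequence, an adjoined layer that is empty (once the speed estimate is made precise there is a subdominant alternative — an excessively long empty run in such a layer — which is handled the same way and which I suppress here).

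I would bound the probability of this by a union over the $n^{O(1)}p^{-6}$ pairs $(D,\ell)$ in which $D$ is a critical droplet in $[n]^2$ and $\ell$ is a row or column that could be adjoined in the growth sequence of $D$. The key point is that any such $\ell$ is disjoint from $D$, so the events $\{D\text{ internally spanned}\}$ and $\{\ell\text{ empty}\}$ are independent; the contribution of a pair is therefore at most $\P_p\big(I(D)\big)\cdot(1-p)^{|\ell|}\le(1-p)^{p^{-3}/4}\le e^{-p^{-2}/4}$. Summing, $\P_p\big(\Gamma(n)\setminus I_{4n/p}([n]^2)\big)\le n^{O(1)}p^{-6}e^{-p^{-2}/4}$, which is at most $\epsilon\alpha$ once $n$ is large and $p^{-2}\ge c(\alpha,\epsilon)\log n$. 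This last condition holds in every application of the lemma, where $n$ is taken to be a critical grid size, at most $e^{O(1/p)}$; in the complementary range $p$ is supercritical by more than a polylogarithmic factor, $[n]^2$ percolates well within $4n/p$ steps by an elementary argument, and the conclusion is immediate.

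I expect the decoupling in the third paragraph to be the crux: a naive union bound would cost a factor $\sum_D\P_p(I(D))$ and so lose far more than the permitted $1-\epsilon$, whereas exploiting the independence of $\{D\text{ internally spanned}\}$ from the fresh randomness in the growth layers recovers it — at the cost of the restriction on $n$ versus $p$, hence of the case split. The time estimate of the second paragraph is the only substantial external ingredient; it amounts to the statement that long empty runs, of length $\Theta((\log n)/p)$ and hence unavoidable, delay the advancing front by only $O(1/p)$ each rather than by a logarithmic factor, and it is exactly this that makes $4n/p$, rather than the naive $\tfrac{n\log n}{p}$, the correct bound.
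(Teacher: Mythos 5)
The overall skeleton of your proposal matches the paper's: you make the same case split on whether $p$ is supercritical ($\gamma/2 < 3p^{-1}\log n$, handled trivially by tiling with much smaller squares), and in the complementary case you grow an internally spanned critical droplet outward to $[n]^2$, arguing that long empty rows or columns are rare. The cascade observation and the disjointness/independence of $\ell$ from $D$ are correct, though your union bound over pairs $(D,\ell)$ is heavier-handed than the paper's event $E$, which simply requires that \emph{every} row or column of length $\gamma/2$ be occupied and yields $\P_p(E^c)\leq n^{-1}$ without tracking individual droplets or growth sequences; that is a stylistic rather than a substantive difference.

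The substantive gap is that you invoke the speed estimate rather than proving it, and you dismiss the second branch of your dichotomy --- ``an excessively long empty run in such a layer'' --- with the assertion that it ``is handled the same way.'' It is not. Knowing only that every relevant row or column is occupied gives the na\"{\i}ve bound $T=O(n\gamma)$, which the paper explicitly flags as inadequate, because each advance of the front may have to wait time up to $\gamma/2$ for an isolated infected site to propagate along a layer. To get $O(n/p)$ one needs a concentration estimate on the \emph{sum} of the gap-lengths across rows: the paper introduces $X_1(x)=2\sum_{y}X_1(x,y)$, where $X_1(x,y)$ is the geometric distance to the next infected site, and applies a Chernoff bound to show $X_1(x)\leq 2n/p$ simultaneously for all $x$ except with probability $n^2e^{-n/4}$ (the event $F$). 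This is a genuinely different calculation from the empty-row estimate, and it is precisely where the constant in $p^{-6}+2np^{-1}+n\leq 4n/p$ comes from. Moreover, citing ``the adaptation of Holroyd's growth estimate mentioned in the introduction'' is circular: this lemma is one of the places where that adaptation is actually carried out, not a black box that can be invoked. Without the analogue of $F$ your argument does not deliver the stated time bound.
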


\begin{proof}
If $\gamma/2<3p^{-1}\log n$ then $p$ is so large that the probability $[\sqrt{n}]^2$ is not internally spanned is $o(1/n)$. Hence, we can tile the grid with squares of side length $\sqrt{n}$ and with high probability they will all be internally spanned. Each such square takes time at most $n$ to fill, so $T\leq n$ with high probability. From now on we shall assume that $\gamma/2\geq 3p^{-1}\log n$.

Let $E$ be the event that every row or column of length $\gamma/2\geq 3p^{-1}\log n$ is non-empty. Thus
\begin{equation}\label{eq:probEc}
\P_p(E^c) \leq n^2 (1-p)^{3p^{-1}\log n} \leq \exp\big(2\log n - 3\log n\big) = o(1).
\end{equation}

Provided there exists an internally spanned critical droplet, the event $E$ ensures that $[n]^2$ is internally spanned. However, the proof only shows that the percolation time is $O(n\gamma)$. We introduce an additional event $F$ to ensure that the spread of infection from the critical droplet to the rest of the grid is fast, so that the percolation time is at most $6n/p$. First let $X_1(x,y)$ be the least $i\geq 0$ such that $(x+i,y)$ belongs to $A$. (It is convenient here to extend $A$ to a $p$-random subset of $\Z^2$ and to allow $x+i>n$. The intersection of the event $F$ with the event $E$ will imply that $x+i\leq n$ in all relevant cases.) Similarly, let $X_2(x,y)$ be the least $i\geq 0$ such that $(x,y+i)\in A$ (and again we allow $y+i>n$). Let
\[
X_1(x) = 2\sum_{y=1}^n X_1(x,y) \qquad \text{and} \qquad X_2(y) = 2\sum_{x=1}^n X_2(x,y).
\]
The purpose of defining $X_1(x)$ and $X_2(y)$ in this way is that if, for example, $[(x,1),(n,1)]$ is full and $X_1(x,y)\leq n-x$ for every $y$, then $X_1(x)+n$ is an upper bound for the time it takes $[(x,1),(n,n)]$ to become infected. To see this, observe that $[(x,2),(X_1(x,3),2)]$ is fully infected by time
\[
\max\big\{X_1(x,2),X_1(x,3)\big\} \leq X_1(x,2) + X_1(x,3),
\]
and inductively that $[(x,k),(X_1(x,k+1),k)]$ is fully infected by an additional time
\[
\max\big\{X_1(x,k),X_1(x,k+1)\big\} \leq X_1(x,k) + X_1(x,k+1)
\]
for all $2\leq k\leq n-1$. It then takes at most another $n$ steps for the rest of $[(x,0),(n,n)]$ to become infected.

Now we define the event $F$ to be that $X_1(x)\leq 4n/p$ for every $1\leq x\leq n-p^{-3}$ and that $X_2(y)\leq 4n/p$ for every $1\leq y\leq n-p^{-3}$. Observe that
\[
\P_p\left(\frac{X_1(x)}{2} > \frac{2n}{p}\right) = \P_p\left(\bin\left(\frac{2n}{p},p\right)<n\right),
\]
and by standard Chernoff bounds (for example, Theorem A.1.18 of \cite{ProbMeth}), this is at most $e^{-n/4}$. Hence $\P_p(F^c) \leq n^2 e^{-n/4}$. By combining this with \eqref{eq:probEc} and the assumption that $\P_p\big(\Gamma(n)\big)\geq\alpha$, the probability that $\Gamma(n)\cap E\cap F$ fails is
\begin{align*}
\P_p\big((\Gamma(n)\cap E\cap F)^c\big) &\leq \P_p\big(\Gamma(n)^c\big) + \P_p(E^c) + \P_p(F^c) \\
&\leq 1 - \alpha + n^{-1} + n^2e^{-n/4} \\
&\leq 1 - (1-\epsilon)\alpha,
\end{align*}
provided $n$ is sufficiently large.

Finally, $E$ ensures that $X_i(x,y)\leq p^{-3}$ for $i=1,2$ and for every $x$ and $y$, so given that $\Gamma(n)\cap E\cap F$ occurs, the percolation time is at most
\[
p^{-6} + 4np^{-1} + n \leq 6n/p.
\]
Here we have used the fact that $\alpha>0$ implies $p\geq(1-\delta)p_c([n]^2,2)$ for some $\delta>0$, and hence $p^{-6} \ll n/p$.
\end{proof}

In the next lemma we determine the critical grid size as a function of $p$. The lemma uses the notion of a strongly good $m$-cell, which was defined to be an $m$-cell $D$ such that $I_{Bm/p}(D)$ holds for a large constant $B$.

\begin{lemma}\label{le:mu}
There exists $\delta>0$ and a function $\mu:(0,\delta)\to(0,\infty)$ satisfying $\mu(p)=\lambda+o(1)$, such that if $\hat{K}(p)=\exp\big(\mu(p)/p\big)$ then
\begin{enumerate}
\item the probability that a $\hat{K}(p)$-cell contains an internally spanned critical droplet is $1/2+o(1)$ as $p\to 0$, and
\item the probability that a $\hat{K}(p)$-cell is strongly good is at least $1/2$ for all $p\in(0,\delta)$, provided $B\geq 6$.
\end{enumerate}
\end{lemma}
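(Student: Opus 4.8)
The plan is to define $\hat K(p)$ directly as (essentially) the smallest side length at which a critical droplet is internally spanned with probability slightly above $1/2$, set $\mu(p)=p\log\hat K(p)$, and read off both statements. Write $g(p,n):=\P_p\big(\Gamma([n]^2)\big)$. Coupling all the processes on a single $p$-random subset of $\Z^2$ shows that $n\mapsto g(p,n)$ is non-decreasing, since a critical droplet internally spanned inside $[n]^2$ remains an internally spanned critical droplet inside any larger square. Fix an $o(1)$ function $\psi$ with $0<\psi(p)\leq\tfrac12$ and $\psi(p)\gg e^{-\lambda/(2p)}$ for small $p$ (e.g.\ $\psi(p)=1/\log\log(1/p)$), let $\hat K(p)$ be the least $n\in\N$ with $g(p,n)\geq\tfrac12+\psi(p)$, and put $\mu(p):=p\log\hat K(p)$. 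This is well defined since $g(p,n)\to 1$ as $n\to\infty$ for each fixed $p$, and $\hat K(p)\to\infty$ as $p\to 0$ (see below).

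Two facts about $g$ drive the argument. \emph{(Sharp threshold.)} For each fixed $\epsilon>0$ and all small $p$, $g(p,n)\to 1$ whenever $\log n\geq(\lambda+\epsilon)/p$: then $p>(1+o(1))p_c([n]^2,2)$, so $[n]^2$ percolates with high probability, and percolation of $[n]^2$ forces $\Gamma([n]^2)$ --- taking an internally spanned $D\subseteq[n]^2$ with $\phi(D)\geq\gamma/2$ of least semi-perimeter, the rectangles process exhibits $D$ as the merge of two internally spanned droplets of strictly smaller semi-perimeter, hence (by minimality) of semi-perimeter $<\gamma/2$, which forces $\phi(D)\leq\gamma$; alternatively, cite \cite[Theorem~2]{Hol}. \emph{(Small jumps.)} Since $g(p,n+1)-g(p,n)$ is at most the probability that $[n+1]^2$ contains an internally spanned critical droplet meeting row or column $n+1$, and there are at most $Cp^{-6}n$ critical droplets meeting a fixed row or column, a union bound gives
\[
g(p,n+1)-g(p,n)\;\leq\;Cp^{-6}n\cdot\max_D\P_p\big(I(D)\big),
\]
the maximum over critical droplets $D$. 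By Holroyd's metastability estimate, together with Lemma~\ref{le:phi} for the very elongated critical droplets, $\max_D\P_p(I(D))\leq\exp\big(-(2\lambda-o(1))/p\big)$ uniformly over critical $D$.

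With these in hand, both parts follow. The union bound $g(p,n)\leq p^{-6}\exp\big(2\log n-(2\lambda-o(1))/p\big)$ shows $g(p,n)\to 0$, hence $g(p,n)<\tfrac12$, when $\log n\leq(\lambda-\epsilon)/p$, so $\hat K(p)>e^{(\lambda-\epsilon)/p}$; and the threshold gives $g(p,n)>\tfrac12+\psi(p)$ once $\log n\geq(\lambda+\epsilon)/p$, so $\hat K(p)\leq e^{(\lambda+\epsilon)/p}$. As $\epsilon>0$ is arbitrary, $\mu(p)=p\log\hat K(p)=\lambda+o(1)$; in particular $\hat K(p)\leq e^{3\lambda/(2p)}$ for small $p$, so the small-jump bound at $n=\hat K(p)-1$ gives $g(p,\hat K(p))-g(p,\hat K(p)-1)\leq Cp^{-6}e^{3\lambda/(2p)-(2\lambda-o(1))/p}=o(1)$, whence
\[
\tfrac12+\psi(p)\;\leq\;g\big(p,\hat K(p)\big)\;<\;\tfrac12+\psi(p)+o(1)\;=\;\tfrac12+o(1),
\]
which is part (i). For part (ii), $\P_p\big(\Gamma(\hat K(p))\big)=g(p,\hat K(p))\geq\tfrac12+\psi(p)$, so Lemma~\ref{le:holroydplus} with $\alpha=\tfrac12+\psi(p)$ and $\epsilon=\psi(p)$ --- legitimate for $p$ small since $\hat K(p)\to\infty$, in particular $\hat K(p)\gg 1/\psi(p)$ --- yields
\[
\P_p\big(I_{4\hat K(p)/p}([\hat K(p)]^2)\big)\;\geq\;\big(1-\psi(p)\big)\big(\tfrac12+\psi(p)\big)\;\geq\;\tfrac12,
\]
using $\psi(p)\leq\tfrac12$. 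Since $B\geq 4$, being internally spanned by time $4\hat K(p)/p$ implies being so by time $B\hat K(p)/p$, so a $\hat K(p)$-cell is strongly good with probability at least $1/2$, which is part (ii).

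The one non-routine input is the uniform bound $\max_D\P_p(I(D))\leq\exp\big(-(2\lambda-o(1))/p\big)$ over critical droplets $D$: this is in essence Holroyd's sharp metastability estimate, and it is what makes the combinatorial factor $Cp^{-6}n$ (for $n$ up to about $e^{\lambda/p}$) harmless in the union bound, so that $g(p,\cdot)$ passes through the level $\tfrac12$ in steps of size $o(1)$ rather than overshooting it. The remaining ingredients --- the implication that percolation of $[n]^2$ forces $\Gamma([n]^2)$, via the rectangles process, and the bookkeeping that makes $\psi(p)$ small enough for (i) yet large enough to absorb the multiplicative loss in Lemma~\ref{le:holroydplus} for (ii) --- are routine.
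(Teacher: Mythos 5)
Your proof is correct, but it differs from the paper's in how the ``small jumps'' property of $n \mapsto \P_p(\Gamma([n]^2))$ is established, which is the heart of the argument. You define $\hat K(p)$ explicitly as the first $n$ where $g(p,n):=\P_p(\Gamma([n]^2))$ crosses $\tfrac12+\psi(p)$ and then bound the \emph{additive} increment $g(p,n+1)-g(p,n)$ by a union bound over critical droplets touching the new row or column, at which point everything hinges on the uniform estimate $\max_D\P_p(I(D))\leq\exp\big(-(2\lambda-o(1))/p\big)$ over critical droplets $D$; this is Holroyd's quantitative metastability bound, a genuinely substantial input, since you need the decay rate to beat $\hat K(p)\lesssim e^{(\lambda+\epsilon)/p}$ times the $p^{-6}$ combinatorial factor. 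The paper instead proves a \emph{multiplicative} bound $\P_p(\Gamma(n+1))\leq(1+\epsilon)\P_p(\Gamma(n))$ via a covering of $[n+1]^2$ by $[n]^2$ and three thin $\gamma$-strips together with a tiling/independence comparison showing $\P_p(\Gamma([n]\times[\gamma]))\ll\P_p(\Gamma(n))$ when $n\gg\gamma$, which requires only elementary covering and independence considerations and no quantitative decay rate for $\P_p(I(D))$. Your route buys an explicit threshold definition of $\hat K(p)$ at the cost of importing Holroyd's sharp exponential estimate; the paper's route is lighter on external machinery but defines $\mu$ more implicitly. The final deduction of part (ii) from Lemma~\ref{le:holroydplus}, including the calibration $\psi(p)\gg e^{-\lambda/(2p)}$ to ensure $\hat K(p)$ is large enough for that lemma while $\psi(p)=o(1)$ for part (i), is correct and matches the paper's use of the same lemma.
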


\begin{proof}
Let $\epsilon>0$. We shall prove the existence of a function $\mu(p)$ such that the probability that a $\hat{K}(p)$-cell contains an internally spanned critical droplet is at least $1/2+\epsilon$ and at most $1/2+2\epsilon$. This will prove (i), and the step up from $\Gamma\big(\hat{K}(p)\big)$ to $\strongly\big(\hat{K}(p)\big)$ required for (ii) will be provided by Lemma \ref{le:holroydplus}.

We use Theorem $1$ of \cite{Hol}, in the following form: if $\liminf p\log n > \lambda$ then $\P_p\big(I(n)\big)\to 1$, while if $\limsup p\log n < \lambda$ then $\P_p\big(I(n)\big)\to 0$. If $[n]^2$ is internally spanned then certainly it contains an internally spanned critical droplet, by the rectangles process, Lemma \ref{le:AL}, so it follows that if $\liminf p\log n > \lambda$ then $\P_p\big(\Gamma(n)\big)\to 1$. For a corresponding statement from below we need to use the proof of Theorem $1$ in \cite{Hol}, rather than the statement of the theorem itself. The proof shows that if $\limsup p\log n \leq (1-\epsilon)\lambda$, then with high probability $[n]^2$ does not contain an internally spanned droplet with long side between $C/p$ and $2C/p$, where $C$ is a large constant depending on $\epsilon$. Thus, by the rectangles process, under the same assumptions, with high probability $[n]^2$ does not contain an internally spanned critical droplet.

It follows that, for any $\epsilon>0$ and any $\epsilon'>0$, if $p$ is sufficiently small, then
\begin{equation}\label{eq:inv1}
\P_p\bigg(\Gamma\Big( e^{\frac{\lambda+\epsilon'}{p}} \Big)\bigg) > 1-\epsilon
\end{equation}
and
\begin{equation}\label{eq:inv2}
\P_p\bigg(\Gamma\Big( e^{\frac{\lambda-\epsilon'}{p}} \Big)\bigg) < \epsilon.
\end{equation}

Now, cover $[n+1]^2$ with one copy of $[n]^2$, two copies of $[n]\times[\gamma]$, and one copy of $[\gamma]\times[n]$. Observe that if $[n+1]^2$ contains a critical droplet, then so must at least one of the four covering sets, so we may deduce that
\[
\P_p\big(\Gamma(n+1)\big) \leq \P_p\big(\Gamma(n)\big) + 3\P_p\big(\Gamma([n]\times[\gamma])\big).
\]
By tiling $[n]^2$ with $[n]\times[\gamma]$ rectangles we have
\[
1-\P_p\big(\Gamma(n)\big) \leq \Big(1-\P_p\big(\Gamma([n]\times[\gamma])\big)\Big)^{n/\gamma},
\]
so $\P_p\big(\Gamma(n)\big) \gg \P_p\big(\Gamma([n]\times[\gamma])\big)$ provided $n\gg\gamma$. Hence, if $p$ is sufficiently small, then
\[
\P_p\big(\Gamma(n+1)\big) \leq (1+\epsilon)\P_p\big(\Gamma(n)\big).
\]
Together with \eqref{eq:inv1} and \eqref{eq:inv2} it follows that there exists a function $\mu(p)=\lambda+o(1)$ such that the probability that a $\hat{K}(p)$-cell contains an internally spanned critical droplet is at least $1/2+\epsilon$ and at most $1/2+2\epsilon$. As previously observed, this proves (i), and (ii) now follows from Lemma \ref{le:holroydplus}.
\end{proof}

We are now in a position to define the critical grid size $K$.

\begin{definition}\label{de:K}
Let $p_0>0$ be a quantity to be determined later, but which certainly satisfies $p_0<\delta$, where $\delta$ is as in Lemma \ref{le:mu}. The critical grid size $K=K(p)$ is defined by
\[
K(p) = \begin{cases} \exp\big(\mu(p)/p\big) & \text{if } p\leq p_0 \\ \exp\big(\mu(p_0)/p_0\big) & \text{if } p>p_0. \end{cases}
\]
(Thus, $K(p)$ is equal to the function $\hat{K}(p)$ defined in Lemma \ref{le:mu} when $p\leq p_0$.)
\end{definition}

The purpose of the parameter $p_0$ is that by taking $p_0$ sufficiently small we obtain an arbitrarily large lower bound for $K(p)$ uniformly over all $p\in(0,1)$. Furthermore, by Lemma \ref{le:mu} (for small $p$) and by coupling (for $p\geq p_0$), a $K(p)$-cell is strongly good with probablity at least $1/2$ for all $p$.

The function $\mu$ whose existence we have just proved in Lemma \ref{le:mu} is the precisely the function $\mu$ whose existence is asserted in Theorem \ref{th:small}. Thus, the significance of the factor of $K=\exp\big(\mu(p)/p\big)$ in the formula for $T$ in that theorem is that it is a grid size at which the probability of percolation is a constant.

The two functions $\mu$ and $K$ will continue to be used extensively throughout the paper, so it is worth bearing in mind their key properties: these are that $\mu(p)$ is equal to $\lambda+o(1)$ and that the probability a $p$-random subset of $[K]^2$ percolates is approximately constant.

\section{Large $p$}\label{se:large}

The lower bound in Theorem \ref{th:large} is better described as an observation. We state it here as a separate lemma so that it can be reused for part of the proof of the lower bound in Theorem \ref{th:small}. Here, and throughout the paper, we use $q$ to denote $1-p$, the probability that a site is initially uninfected.

\begin{lemma}\label{le:lb}
Let $p=p(n)$ be probabilities such that $\log 1/q \ll \log n$, and let $T$ be the percolation time of a $p$-random subset of $[n]^2$. Then
\[
T \geq \frac{(1+o(1))\log n}{2\log 1/q}
\]
with high probability as $n\to\infty$.
\end{lemma}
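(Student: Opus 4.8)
\emph{Proof plan.} The plan is to exhibit, with high probability, the obvious obstruction to fast percolation mentioned in the introduction: a thin empty rectangle, which can only be filled in from its two ends and therefore survives for about half its length.

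I would begin with the underlying deterministic statement: if $R\subset[n]^2$ is a $2\times(2t+1)$ droplet, say with columns $x_0,x_0+1,\dots,x_0+2t$, that is empty in the initial set $A$, then the central column $x_0+t$ is not contained in $[A]_t$, so that $[A]_t\neq[n]^2$ and hence $T\geq t+1$. To prove this it is enough, by monotonicity of the bootstrap process, to verify it after additionally declaring every site of $\Z^2\setminus R$ to be infected at time $0$: infecting extra sites never slows the process down, and the phantom sites outside $[n]^2$ that are infected this way are never genuinely infected, so this modification only decreases infection times inside $R$. Once $R$ is empty and its complement is full, every site of $R$ has exactly one neighbour outside $R$, which is infected, and an immediate induction on $s$ shows that at time $s$ the only columns of $R$ meeting the infected set are its $s$ leftmost and its $s$ rightmost columns; in particular column $x_0+t$ is uninfected for every $s\leq t$.

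Given this, it suffices to show that for each fixed $\epsilon>0$ there is, with high probability, an empty $2\times(2t+1)$ droplet in $[n]^2$ for $t=t(n)=\lfloor(1-\epsilon)\log n/(2\log1/q)\rfloor$; letting $\epsilon\to0$ then yields the lemma, whose right-hand side is non-trivial only when it tends to infinity, i.e.\ when $\log1/q=o(\log n)$, so I would assume this. I would then dispose of the remaining degenerate case: if $p=o(1/\log n)$, then $A$ is subcritical and $[A]\neq[n]^2$ with high probability (by \cite{AL}, or \cite{Hol}), so $T=\infty$; hence we may also assume $p=\Omega(1/\log n)$, which forces $\ell:=2t+1=O((\log n)^2)\ll n$. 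I would then partition most of $[n]^2$ into $\Theta(n^2/\ell)$ pairwise disjoint translates of a $2\times\ell$ rectangle; these are empty in $A$ independently, each with probability $q^{2\ell}=n^{-2(1+o(1))(1-\epsilon)}$ by the choice of $t$, so the number of them that are empty has mean $\Theta(n^2/\ell)\,q^{2\ell}=n^{2\epsilon-o(1)}\to\infty$. Hence with high probability some $2\times(2t+1)$ droplet is empty, and by the deterministic fact $T\geq t+1\geq(1-\epsilon)\log n/(2\log1/q)$, as required. (The disjoint-translates step could equally be replaced by the second moment method applied to all $2\times\ell$ droplets, using that two of them are independent unless they overlap, which after summing a geometric series over the overlaps gives a variance bounded by $O(p^{-1})$ times the mean.)

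As the introduction says, this lemma is closer to an observation than a theorem: the one substantive ingredient is the deterministic blocking fact above, and the probabilistic half is a routine first (or second) moment computation, so there is no real obstacle. The only mild point is the case analysis in $p$; in particular the narrow range $1-p=n^{-\Omega(1)}$, in which the bound reduces to a constant, has to be treated separately by hand, but this range plays no role in either application of the lemma.
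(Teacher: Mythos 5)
Your proof matches the paper's almost exactly: tile $[n]^2$ with disjoint $2\times(2t+1)$ rectangles, note that an empty one blocks the middle column for about $t$ steps, and use the independence of the tiles to show one is empty with high probability --- the paper writes the identical bound $\P_p(T\leq t)\leq(1-q^{4t+2})^{n^2/(4t+2)}$ (and, like you, does not belabour the degenerate ranges of $p$, for which the bound is either trivial or handled by $T=\infty$). One parenthetical slip worth fixing: the four corner sites of an empty $R$ have \emph{two} neighbours outside $R$, not one, which is precisely why the end columns fill at time $1$ and your stated induction (``$s$ leftmost and $s$ rightmost columns infected at time $s$'') actually runs; the sentence preceding it, taken literally, would imply $R$ is never infected at all, but the induction and its conclusion are correct.
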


\begin{proof}
Let $t\geq 1$. Divide $[n]^2$ into $n^2/(4t+2)$ disjoint $(2t+1)\times 2$ rectangles. If any one of these rectangles is initially empty, then the middle two squares in that rectangle cannot be infected before the $t$th step, so $T$ must be at least $t$. Hence
\[
\P_p(T \leq t) \leq (1-q^{(4t+2)})^{n^2/(4t+2)}.
\]
The right-hand side is at most
\[
\exp\big[-q^{(4t+2)} n^2/(4t+2)\big] = \exp\Big(-\exp\big(2\log n - \log(4t+2) - (4t+2)\log(1/q)\big)\Big),
\]
which is $o(1)$ if
\[
\limsup_{n\to\infty} \frac{2t\log 1/q}{\log n} < 1. \qedhere
\]
\end{proof}

Now we begin the build up to the proof of the upper bound in Theorem \ref{th:large}. In the previous section we established the existence of the critical grid size $K=\exp\big(\mu(p)/p\big)$, where by ``critical'' in this context we mean that $p$-random initial subsets of cells of this side length percolate with probability bounded away from $0$ and $1$. Here we use this critical grid size as the base case of an induction argument which proves the existence of a larger, but not considerably larger, grid size $L$, with the property that the probability cells of this side length fail to percolate is essentially equal to the probability of the existence of an empty double row or column of the same length. The reason we want $L$ not to be too large is because it appears as an error term in the proof of the upper bound in Theorem~\ref{th:large}.

The following inequality will form the basis of the induction argument we use to prove the important properties of $L$. Recall that we write $\eta_m$ for the probability that an $m$-cell $D$ is bad, where bad was defined to mean that $\interior(D)\not\subset[D\cap A]_{Bm/p}$.

\begin{lemma}\label{le:etaineq}
If $B\geq 50$, then for all $m\geq 1$ we have
\begin{equation}\label{eq:eta}
\eta_{2m} \leq \eta_m^4 + 100 m^2 q^{4m-8}.
\end{equation}
\end{lemma}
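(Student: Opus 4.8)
The plan is to relate a $2m$-cell to the four $m$-cells obtained by (roughly) quartering it, so that the recursion in \eqref{eq:eta} emerges from the rectangles process together with a union bound over the ways a $2m$-cell can fail to be good. First I would partition a $2m$-cell $D$ into four $m$-cells $D_1,\dots,D_4$, one in each corner (being slightly careful with the parity of $2m$ versus $m$, which is harmless since we allow an extra $O(m^2 q^{\Theta(m)})$ error). Suppose that $D$ is bad, i.e.\ $\interior(D)\not\subset[D\cap A]_{B\cdot 2m/p}$. I want to argue that either all four of the $D_i$ are bad (giving the $\eta_m^4$ term), or else some cheap "empty double row/column"-type event occurs inside $D$ (giving the $100m^2 q^{4m-8}$ term). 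The point is that if, say, $D_1$ is good, so $\interior(D_1)$ is infected by time $Bm/p$, then $\interior(D_1)$ is a large infected droplet sitting in a corner of $D$; if in addition one more adjacent $D_i$ is good, the two interiors are close enough that, via the rectangles process (Lemma \ref{le:AL}) and the fact that $B\cdot 2m/p$ gives ample extra time, the infection should grow to cover $\interior(D)$ unless it is blocked. A block of this growth forces an empty segment of a double row or column of length about $2m$ somewhere in $D$, crossing the cell; there are at most $O(m)$ choices of position and $O(m)$ choices of which double row/column, and each has probability at most $q^{4m-8}$ (the $-8$ absorbing boundary effects and the fact that a crossing double row has length at least $2m-4$ in each of two rows). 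Summing, this contributes at most $100m^2 q^{4m-8}$, and I would choose the constants so the bookkeeping closes.

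More carefully, the dichotomy I would actually set up is: if at most one of $D_1,\dots,D_4$ is good then in particular at least three are bad, and since $\eta_m\le 1$ we have $\P(\text{at most one }D_i\text{ good})\le \binom{4}{1}\eta_m^3+\eta_m^4\le \eta_m^4 + 4\eta_m^3$ — but that is the wrong shape, so instead I would be more surgical: condition on which $D_i$ are good, and show that whenever two of them are good (one can always pick two that are "adjacent" in the sense of sharing a side of $D$, unless the two good ones are diagonally opposite, which I'd handle by noting diagonally opposite good cells also force $\interior(D)$ to fill unless blocked), the complement of the "good propagation" event is contained in a union of $O(m^2)$ empty-crossing events. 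Thus $\eta_{2m}\le \eta_m^4 + \P(\text{two }D_i\text{ good but propagation blocked})\le \eta_m^4 + 100m^2 q^{4m-8}$. The constant $100$ and the exponent $4m-8$ are generous enough to absorb: (a) the $O(m)\times O(m)$ union bound over positions and orientations of a blocking double row; (b) the need for the blocking segment to be empty in two parallel rows, each of length $\ge 2m-4$, hence probability $\le q^{2(2m-4)}=q^{4m-8}$; (c) the handful of extra cases from parity and from the diagonal configuration.

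The main obstacle will be step two: making precise and correct the claim that "two good adjacent $m$-cells, given enough time, infect the interior of the $2m$-cell unless there is an empty crossing double row/column." This is where one must use the structure of the two-neighbour process — specifically that a large infected droplet grows one row at a time into any adjacent row that is not entirely empty, and that the only way to stop such growth across a strip of width $2m$ within time $B\cdot 2m/p$ (which dwarfs $2m$) is for an entire double row (or double column) to be empty. Here the extra factor of $2$ in the time budget $B\cdot 2m/p$ versus $2\cdot(Bm/p)$ matters: it leaves room both for the two sub-cells to fill and for the subsequent merging/growth, and this is presumably why the lemma requires $B\ge 50$ rather than just $B\ge 4$. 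I would isolate this propagation statement as the crux, prove it via the rectangles process and a direct "growth by non-empty rows" argument, and then the inequality \eqref{eq:eta} follows by the union bound sketched above.
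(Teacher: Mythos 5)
Your overall architecture is the same as the paper's: quarter the $2m$-cell $D$ into $m$-cells $D_1,\dots,D_4$, get $\eta_m^4$ when all four are bad, and otherwise blame blocked propagation on empty double rows/columns. But two things would stop your sketch from closing.

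First, you want \emph{two} good sub-cells, and you notice (correctly) that the dichotomy ``at most one good'' gives $\eta_m^3$, the wrong shape. You then pivot to ``condition on which $D_i$ are good and treat the case of two good ones,'' but you never say what to do when \emph{exactly one} is good, which is exactly the case your first attempt couldn't afford. The paper resolves this by showing that one good $D_i$ already suffices: if, say, $D_1$ is good, then infection grows from $\interior(D_1)$ through six $(m+1)\times(m-2)$ strips $S_1,\dots,S_6$ that chain around the other three cells. Failure then forces at least two disjoint $S_i$ to contain an empty double row/column (giving $q^{2(m-2)}$ twice, hence $q^{4m-8}$), and a union bound over the $O(m^2)$ positions gives the second term. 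So the clean dichotomy ``all four bad'' versus ``at least one good'' is available, and you should use it.

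Second, and more seriously, you conflate ``propagation is possible'' with ``propagation is fast enough.'' You write that within time $B\cdot 2m/p$ (``which dwarfs $2m$'') growth across the cell can only be stopped by an empty double row. But the process does not advance one row per time step: each row advances only after the droplet reaches the nearest pair of initially infected sites in that row, a geometric random variable with mean about $1/p$. Over $\Theta(m)$ rows the total is a sum of $\Theta(m)$ geometrics with expectation $\Theta(m/p)$, which is \emph{not} dwarfed by $Bm/p$ --- it is the same order. So even with no empty double row, the traversal can exceed the time budget, and this happens with probability $\exp(-\Theta(m))$, which is exactly the scale of the term you are trying to beat. The paper handles this with the ``traversable'' vs.\ ``quickly traversable'' distinction and a Chernoff-type estimate showing
\[
\P\big(S_i \text{ traversable but not quickly traversable}\big)\ \leq\ \exp\big(-4m\log(1/q)\big),
\]
proved by writing the traversal time as $2\sum_i X_i + m$ with $X_i$ geometric and bounding $\P(\bin(10m/p,p)<m/2)$. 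This concentration step is the crux of the lemma, and it is also where $B\geq 50$ is actually used (to make the exponent beat $4\log(1/q)$), not primarily for ``room to merge'' as you suggest. Without it your inequality has an uncontrolled third term and the induction that follows would not close.
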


\begin{proof}
Suppose a $2m$-cell $D=[(1,1),(2m,2m)]$ is bad, and divide it into four disjoint $m$-cells $D_1$, $D_2$, $D_3$ and $D_4$, with bottom-left corners at $(1,1)$, $(1,m+1)$, $(m+1,1)$ and $(m+1,m+1)$ respectively. Either all four $m$-cells are bad, or at least one of them is good. Suppose one of them is good, say $D_1$.

Let $S$ be a droplet such that $\lg(S)=m+1$ and $\sh(S)=m-2$. Suppose first that $S$ is taller than it is wide, so that $\dim(S)=(m-2,m+1)$. In that case we say that $S$ is \emph{traversable} if it has no empty double rows; so if, without loss of generality, $S=[(1,1),(m-2,m+1)]$, then $S$ is traversable if the sets $[(1,1),(m-2,2)]$, $[(1,2),(m-2,3)]$, $\dots$, $[(1,m),(m-2,m+1)]$ are all occupied. We say that $S$ is \emph{quickly traversable} if every site in $S$ except those in its topmost row is infected by time $24m/p$ assuming the column immediately below $S$ is initially full. So again, if $S=[(1,1),(m-2,m+1)]$, $S'=[(1,0),(m-2,0)]$ and $S''=[(1,m+1),(m-2,m+1)]$, then $S$ is quickly traversable if $S'\cap(S\setminus S'')\subset[S'\cup(A\cap S)]_{24m/p}$. If instead $S$ is oriented so that $\dim(S)=(m+1,m-2)$ then similarly we say that $S$ is \emph{traversable} if it has no empty double columns and \emph{quickly traversable} if every site in $S$ except those in its leftmost column is infected by time $24m/p$ assuming the column immediately to the left of it is initially full.

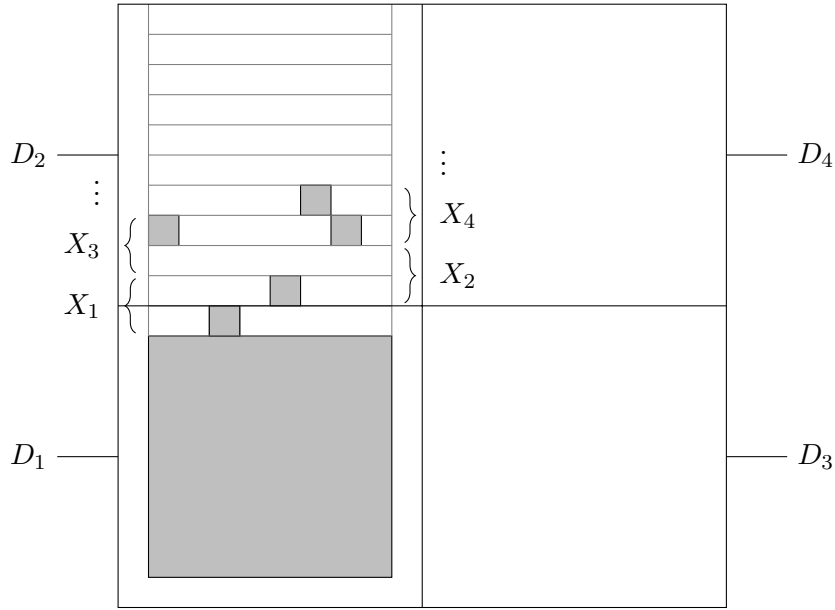
\begin{figure}[ht]
  \centering
  \begin{tikzpicture}[scale=0.8,>=latex]
    \draw [fill=gray!50] (0.5,0.5) rectangle (4.5,4.5);
    \draw [fill=gray!50] (1.5,4.5) rectangle (2,5);
    \draw [fill=gray!50] (2.5,5) rectangle (3,5.5);
    \draw [fill=gray!50] (0.5,6) rectangle (1,6.5);
    \draw [fill=gray!50] (3,6.5) rectangle (3.5,7);
    \draw [fill=gray!50] (1.5,4.5) rectangle (2,5);
    \draw [fill=gray!50] (3.5,6) rectangle (4,6.5);
    \draw [gray] (0.5,4.5) -- (0.5,10) (4.5,4.5) -- (4.5,10);
    \foreach \x in {4.5,5,...,9.5}
      \draw [gray] (0.5,\x) -- (4.5,\x);
    \draw [decorate,decoration={brace,amplitude=4pt},xshift=-6pt] (0.5,4.55) -- (0.5,5.45) node [black,midway,xshift=-20pt] {$X_1$};
    \draw [decorate,decoration={brace,amplitude=4pt},xshift=-6pt] (0.5,5.55) -- (0.5,6.45) node [black,midway,xshift=-20pt] {$X_3$};
    \draw [decorate,decoration={brace,amplitude=4pt,mirror},xshift=6pt] (4.5,5.05) -- (4.5,5.95) node [black,midway,xshift=20pt] {$X_2$};
    \draw [decorate,decoration={brace,amplitude=4pt,mirror},xshift=6pt] (4.5,6.05) -- (4.5,6.95) node [black,midway,xshift=20pt] {$X_4$};
    \node at (-0.35,7) {$\vdots$};
    \node at (5.35,7.5) {$\vdots$};
    \draw (0,0) rectangle (10,10);
    \draw (5,0) -- (5,10) (0,5) -- (10,5);
    \draw (0,2.5) -- (-1,2.5) node [left] {$D_1$};
    \draw (0,7.5) -- (-1,7.5) node [left] {$D_2$};
    \draw (10,2.5) -- (11,2.5) node [right] {$D_3$};
    \draw (10,7.5) -- (11,7.5) node [right] {$D_4$};
  \end{tikzpicture}
  \caption{The $2m$-cell $D$ is divided into four $m$-cells; $D_1$, which is semi-good (its interior is shown shaded), together with $D_2$, $D_3$ and $D_4$. The aim is to grow upwards from $D_1$ into $D_2$ quickly, by considering the left-most initially infected site in each double row above $\interior(D_1)$ in turn. In this figure, $X_1=3$, $X_2=5$ and $X_3=X_4=1$.}\label{fi:etaineq}
\end{figure}

The six droplets to which we need to apply these definitions are the following:
\begin{align*}
S_1 &= [(2,m),(m-1,2m)], & S_2 &= [(1,m+2),(m+1,2m-1)], \\
S_3 &= [(m+2,1),(2m-1,m+1)], & S_4 &= [(m,2),(2m,m-1)], \\
S_5 &= [(m+2,m),(2m-1,2m)], & S_6 &= [(m,m+2),(2m,2m-1)].
\end{align*}
Clearly droplets $S_1$, $S_3$ and $S_5$ all have dimensions $(m-2,m+1)$, while droplets $S_2$, $S_4$ and $S_6$ all have dimensions $(m+1,m-2)$. (See Figure \ref{fi:twoSs}.)

\begin{figure}[ht]
  \centering
  \begin{tikzpicture}[scale=0.8,>=latex]
    \draw [fill=gray!50] (0.5,0.5) rectangle (4.5,4.5);
    \draw [very thick] (0.5,4.5) rectangle (4.5,10);
    \draw [very thick] (4.5,5.5) rectangle (10,9.5);
    \foreach \x in {5.5,6,...,9.5}
      \draw [gray] (0.5,\x) -- (4.5,\x) (\x,5.5) -- (\x,9.5);
    \draw (0,0) rectangle (10,10);
    \draw (5,0) -- (5,10) (0,5) -- (10,5);
    \draw (0,2.5) -- (-1,2.5) node [left] {$D_1$};
    \draw (0,7.5) -- (-1,7.5) node [left] {$D_2$};
    \draw (10,2.5) -- (11,2.5) node [right] {$D_3$};
    \draw (10,7.5) -- (11,7.5) node [right] {$D_4$};
    \draw (0.5,5.75) -- (-1,5.75) node [left] {$S_1$};
    \draw (8.75,5.5) -- (8.75,4.25) -- (11,4.25) node [right] {$S_6$};
  \end{tikzpicture}
  \caption{Suppose that $D_1$ is good, that among every disjoint pair of droplets $S_i$ and $S_j$ at least one is traversable, and that no droplet $S_i$ is traversable but not quickly traversable. Then $D$ is also good.}\label{fi:twoSs}
\end{figure}
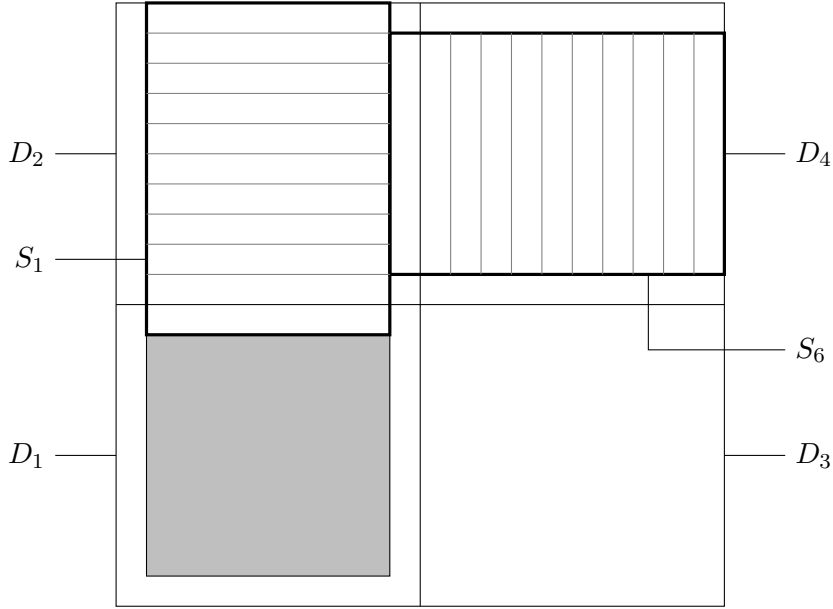

It is easy to see that if $D_1$ is good but $D$ is bad then either at least two disjoint $S_i$ are not traversable, or at least one of the $S_i$ is traversable but not quickly traversable. Indeed, were at least two of the $S_i$ quickly traversable, then the total time it would take $D$ to fill would be at most $Bm/p + 48m/p + m < 2Bm/p$, since $B\geq 50$, so $D$ would be good.

The probability that at least two disjoint $S_i$ are not traversable is at most $15m^2(1-p)^{4m-8}$. To bound the probability that at least one of the $S_i$ is traversable but not quickly traversable, suppose without loss of generality that $S_1$ has this property, and let $X_i$ be the position of the first initially infected site (or pair of sites) along the $i$th double row $[(2,m+i-1),(m-1,m+i)]$, counting from the left, for $i=1,\dots,m$ (see Figure \ref{fi:etaineq}). Note that the time it takes $S_1$ to fill given that the row immediately below it is initially full is at most
\[
2\sum_{i=1}^m X_i + m.
\]
If this quantity is greater then $24m/p$ then, crudely,
\[
\max \left\{ \sum_{i \text{ odd}} X_i , \sum_{i \text{ even}} X_i \right\} > \frac{10m}{p}.
\]
Each of the sums on the left-hand side consists of independent Geometric random variables, so the probability that one of the sums is greater than $10m/p$ is just the probability that a $\bin(10m/p,p)$ random variable is less than $m/2$. This probability is
\begin{align}\label{eq:binombound}
\P\left(\bin\left(\frac{10m}{p},p\right)<\frac{m}{2}\right) &= \sum_{k=0}^{m/2} \binom{10m/p}{k} p^k (1-p)^{10m/p-k} \notag \\
&\leq 2 \binom{10m/p}{m/2} p^{m/2} (1-p)^{10m/p-m/2} \notag \\
&\leq 2 (20e)^{m/2} (1-p)^{10m/p-m/2} \notag \\
&\leq \exp\Big( -\big( (9/p)\log(1/q) - (1/2)\log(40e) \big) m \Big).
\end{align}
The inequality
\[
(9/p)\log 1/q - (1/2)\log(40e) > 4\log 1/q
\]
holds for all $p\in(0,1)$, so \eqref{eq:binombound} is at most $\exp(-4m\log 1/q)$ for all $p\in(0,1)$. Thus, the probability that at least one of the $S_i$ is traversable but not quickly traversable is at most $6\exp(-4m\log 1/q)$.

Putting these observations together, and noting that there were four choices for the good square, we have
\[
\eta_{2m} \leq \eta_m^4 + 60 m^2 q^{4m-8} + 24 q^{4m},
\]
which completes the proof.
\end{proof}

The inequality we have just derived in Lemma \ref{le:etaineq} is the tool that will drive the induction argument in the next lemma to prove the key property of the grid size $L$, which is that the probability percolation fails correlates with the probability of the existence of an empty double row or column. Before stating the lemma we define the grid size $L$.

\begin{definition}\label{de:L}
We define
\[
L:=6K^2\log 1/q.
\]
\end{definition}

\begin{lemma}\label{le:etaineq2}
The probability $\eta_L$ that an $L$-cell is bad satisfies the inequality
\[
\eta_L\leq 50L^2q^{-8}\exp(-2L\log 1/q).
\]
\end{lemma}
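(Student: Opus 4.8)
The plan is to derive the bound on $\eta_L$ by iterating the recursive estimate of Lemma~\ref{le:etaineq}, $\eta_{2m}\le\eta_m^4+100m^2q^{4m-8}$, starting from the base scale $K$ and doubling up to $L$ (throughout, $B\ge 50$, as Lemma~\ref{le:etaineq} demands). The base case is $\eta_K\le 1/2$: by Definition~\ref{de:K}, which rests on Lemma~\ref{le:mu}, a $K$-cell is strongly good, hence good, with probability at least $1/2$. Since Lemma~\ref{le:etaineq} relates scale $2m$ to scale $m$, I would apply it $J$ times, where $2^JK=L$; that $L$ need not be an exact power of two times $K$ is a routine nuisance, handled by taking the base scale to be a suitable $K_1\in[K,2K)$, for which $\eta_{K_1}\le 1/2$ still holds by the reasoning behind Lemmas~\ref{le:holroydplus} and~\ref{le:mu}. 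The role of the hypothesis $A\ge 3/\log 2$ is precisely to make $L$ a large enough multiple of $K^2\log 1/q$ that this number $J$ of doublings suffices for the argument below.

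The iteration has two regimes, meeting at the scale $2^{j^*}K$ at which the two terms on the right of Lemma~\ref{le:etaineq} balance. While $\eta_{2^jK}$ is still comparatively large (roughly, $j\le j^*$), the fourth-power term dominates; iterating the map $\eta\mapsto\eta^4$ from $1/2$ collapses the sequence like $2^{-4^j}$, a doubly-exponential decay in $j$, and the additive terms $100m^2q^{4m-8}$, being super-exponentially small, are negligible here. Once $j$ passes $j^*$ the additive term takes over, and crucially the estimate then \emph{stabilises}: a quantity of size $(2^jK)^2q^{2\cdot 2^jK-8}$, raised to a fourth power, is super-exponentially smaller than $(2^jK)^2q^{4\cdot 2^jK-8}$ (as $2^jK\ge K$ is large), so one step of the recursion gives
\[
\eta_{2^{j+1}K}\ \le\ \bigl(C\,(2^jK)^2q^{2\cdot 2^jK-8}\bigr)^4+100\,(2^jK)^2q^{4\cdot 2^jK-8}\ \le\ 50\,(2^{j+1}K)^2q^{2\cdot 2^{j+1}K-8},
\]
regardless of the constant $C$ inherited from the (single) transitional step. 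Applying this at $j=J-1$ yields $\eta_L\le 50L^2q^{2L-8}=50L^2q^{-8}\exp(-2L\log 1/q)$: the additive term at the final step contributes $100(L/2)^2q^{2L-8}=25L^2q^{2L-8}$, and the residue of the first regime together with the fourth-powered earlier additive terms contribute at most as much again.

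The step I expect to be the main obstacle is the bookkeeping around the crossover: one must verify that by scale $L=2^JK$ the first-regime residue $2^{-4^J}$ has indeed been driven down to (at most) the order of $q^{2L}$, so that it is negligible against $L^2q^{2L-8}$, and that at least one phase-two step remains in which to absorb the inflated constant $C$ into $50$. Comparing exponents, this is the inequality $4^J\log 2\gtrsim 2\cdot 2^JK\log 1/q$, i.e.\ $2^J\gtrsim (2/\log 2)\,K\log 1/q$, which is exactly what the size of $L$ fixed in Definition~\ref{de:L} (equivalently, the hypothesis $A\ge 3/\log 2$) is there to guarantee. Once this is in hand, the remainder is a routine, if slightly delicate, tracking of constants through the two regimes.
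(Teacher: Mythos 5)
Your proposal is correct and follows essentially the same route as the paper: iterate the recursion of Lemma~\ref{le:etaineq} from the base case $\eta_K\le 1/2$, track the doubly-exponential collapse of the fourth-power term, and identify the crossover scale beyond which the additive term $100m^2q^{4m-8}$ dominates and stabilizes --- precisely what the hypothesis $A\ge 3/\log 2$ guarantees. The paper packages the iteration more cleanly as $\eta_{2m}\le\max\{2\eta_m^4,\,200m^2q^{4m-8}\}$, giving the exact residue $2^{-4^r+(4^r-1)/3}\approx 2^{-(2/3)4^r}$; your $2^{-4^J}$ slightly overstates the decay, which is why the sharp crossover constant is $3/\log 2$ rather than your $2/\log 2$, but this is absorbed by the hypothesis and the substance of the argument is the same.
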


\begin{proof}
By Lemma \ref{le:etaineq}, for any $m\geq 1$,
\[
\eta_{2m} \leq \max\big\{ 2\eta_m^4 \, , \, 200m^2q^{4m-8} \big\}.
\]
Since $\eta_K\leq 1/2$ by the definition of $K$, it follows by induction that
\[
\eta_{2^r K} \leq \max\big\{ 2^{-4^r + (4^r-1)/3} \, , \, 200 (2^{r-1}K)^2q^{4\cdot 2^{r-1}K-8} \big\}.
\]
Somewhat crudely, $\eta_{2^r K}$ is at most the second term in the maximum on the right-hand side if $(2/3) 4^r \log 2 \geq 2^{r+1}K \log 1/q$, which holds if 
\[
2^r K \geq \frac{3}{\log 2} K^2 \log 1/q.
\]
Since $L> (3/\log 2) K^2 \log 1/q$, we conclude that
\[
\eta_L \leq 50 L^2 q^{-8} \exp\big( -2L\log 1/q \big),
\]
as required.
\end{proof}

The next ingredient we need is a technical lemma which will allow us to prove the convergence of a certain geometric series. This is the only point in the proof where we use the fact that $L$ is not too small.

\begin{lemma}\label{le:ratio}
Let $C>0$ be a constant, let $p_0$ (in the definition of $K$) be sufficiently small, and let $A$ (in the definition of $L$) be sufficiently large. Then
\[
\big( CL^2q^{-8} \big)^{1/L} (1-p)^{1/8} < 1.
\]
\end{lemma}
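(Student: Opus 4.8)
The plan is to take logarithms, reducing the claim to an elementary inequality, and then to verify that inequality by treating separately the regimes $p\leq p_0$ and $p>p_0$. Writing $q=1-p$ and using $\log(1-p)=-\log(1/q)$, the desired inequality $\bigl(CL^2q^{-8}\bigr)^{1/L}(1-p)^{1/8}<1$ is equivalent, after taking logarithms and multiplying through by $8L$, to $8\log C+16\log L+64\log(1/q)<L\log(1/q)$, that is, to $8\log C+16\log L<(L-64)\log(1/q)$. Since $\log(1/q)=-\log(1-p)\geq p>0$ and, as checked below, $L\geq 128$, it suffices to prove the cleaner bound
\[
16\log C+32\log L<Lp. \qquad(\star)
\]

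Next I would record the crude size bounds needed. By Definition~\ref{de:K} and the remark following it, $K=K(p)\geq K_0:=\exp(\mu(p_0)/p_0)$ for every $p\in(0,1)$, with $K_0\to\infty$ as $p_0\to 0$; moreover, since $\mu(p)=\lambda+o(1)$ we have $\mu(p)\geq\lambda/2$ once $p_0$ is small, so $K(p)\geq e^{\lambda/(2p)}$ whenever $p\leq p_0$. As $p\mapsto pe^{\lambda/(2p)}$ is decreasing on $(0,\lambda/2)$, taking $p_0<\lambda/2$ gives $Kp\geq c_1:=p_0e^{\lambda/(2p_0)}$ for $p\leq p_0$, while for $p>p_0$ one has $Kp=K_0p>K_0p_0\geq c_1$; hence $Kp\geq c_1$ for all $p$, with $c_1=c_1(p_0)\to\infty$. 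Consequently $K^2p\geq c_1^2$, and since $L=AK^2\log(1/q)$ we get $L\geq AK^2p\geq Ac_1^2$, which exceeds $128$ as soon as $p_0$ is small (uniformly in $A\geq 1$); this justifies the reduction to $(\star)$.

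For $(\star)$ I would expand $\log L=\log A+2\log K+\log\log(1/q)$, bound $\log\log(1/q)\leq\log(1/q)$, and then dominate each of the four terms $16\log C$, $32\log A$, $64\log K$, $32\log(1/q)$ by a quarter of $Lp=AK^2p\log(1/q)$. For $16\log C$ and $32\log A$ this is immediate, since $\tfrac14 Lp\geq\tfrac14 A(Kp)^2\geq\tfrac14 Ac_1^2$ is linear in $A$ whereas $16\log C+32\log A$ is only logarithmic in $A$. For $32\log(1/q)$ it suffices that $AK^2p>128$, which holds since $K^2p\geq c_1^2$. The delicate term is $64\log K=64\mu(p)/p$: for $p\leq p_0$ one has $\log K\leq 2\lambda/p$ and $\tfrac14 Lp\geq\tfrac14 Ap^2e^{\lambda/p}$, so $64\log K<\tfrac14 Lp$ reduces to $512\lambda<Ap^3e^{\lambda/p}$, which holds uniformly on $(0,p_0]$ because $p^3e^{\lambda/p}$ is decreasing on $(0,\lambda/3)$ and tends to infinity as $p\to 0$; for $p>p_0$ one has $K=K_0$ and the bound reduces to $256\log K_0<AK_0^2p_0^2$, again true once $p_0$ is small. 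Assembling the four estimates yields $(\star)$, and hence the lemma, once $p_0$ is chosen small and then $A$ large.

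I expect the only genuine obstacle to be exactly this term $64\log K$: one must check that $\log K$ is dominated by a fixed fraction of $Lp$ \emph{uniformly} in $p\in(0,1)$, which forces the split into the small-$p$ regime — where $K$ is of order $e^{\lambda/p}$, enormous, but $p$ itself is tiny, so one is really comparing a power of $1/p$ with $e^{\lambda/p}$ — and the regime $p>p_0$, where $K$ is only a large constant but $\log(1/q)$ may be arbitrarily large. In both cases the required domination comes for free from the mechanism built into Definition~\ref{de:K}: shrinking $p_0$ makes $K_0$, and with it every product $Kp$, $K^2p$, $\dots$, as large as desired, after which $A$ may be taken large.
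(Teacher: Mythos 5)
Your proof is correct, and its overall shape is the same as the paper's: take logarithms of the target inequality, use $L=AK^{2}\log(1/q)$ to rewrite the claim as an inequality between logarithmic/polynomial quantities in $\log C$, $\log A$, $\log K$ and $\log(1/q)$, and then show the left side is dominated once $p_{0}$ is small and $A$ is large. Where you differ is in \emph{how} the domination is established. The paper bounds the whole left-hand side of \eqref{eq:ratio} by the single product $40(\log CA^{2})(\log K)(\log 1/q)$, an inequality of the form ``sum $\leq$ constant times product'' that is only valid when each factor is at least $1$; in particular it needs $\log(1/q)\geq 1$, that is $p\geq 1-1/e$, which is \emph{not} guaranteed over the range where Lemma~\ref{le:upright} invokes this estimate (in Theorem~\ref{th:large} the relevant $p$ can be as small as roughly $2\lambda/\log\log n$). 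Your term-by-term comparison of $16\log C$, $32\log A$, $64\log K$ and $32\log(1/q)$ against $\tfrac14 Lp$, with the explicit split into $p\leq p_{0}$ and $p>p_{0}$ to handle the delicate $64\log K$ term, sidesteps this pitfall and gives the bound uniformly over all $p\in(0,1)$, which is what the lemma actually asserts. One small inaccuracy in your write-up: the blanket claim $K(p)\geq K_{0}:=\exp(\mu(p_{0})/p_{0})$ for \emph{every} $p$ does not quite follow from Definition~\ref{de:K}, since the $o(1)$ fluctuation in $\mu$ could in principle make $\mu(p)/p<\mu(p_{0})/p_{0}$ for some $p<p_{0}$; what the definition does give, and what your later steps actually use, is the uniform bound $Kp\geq p_{0}e^{\lambda/(2p_{0})}$ (and hence $K\geq e^{\lambda/(2p_{0})}$), so the argument is unaffected.
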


\begin{proof}
We require $CL^2q^{-8} < q^{-L/8}$, or, since $L=AK^2\log 1/q$, equivalently we require
\begin{equation}\label{eq:ratio}
\log(CA^2) + 4\log K + 2\log\log 1/q + 8\log 1/q < \frac{A}{8} K^2 (\log 1/q)^2.
\end{equation}
By taking $p_0$ sufficiently small in Definition \ref{de:K}, we obtain $\log K < K^2$ for all $p$. Thus, the left-hand side of \eqref{eq:ratio} is at most
\[
\log(CA^2) + 4\log K + 10\log 1/q \leq 40 (\log CA^2) (\log K) (\log 1/q) < AK^2\log 1/q
\]
provided $A$ is sufficiently large.
\end{proof}

An \emph{up-right $m$-path} is a sequence of $m$-cells $D_1,\dots,D_u$ such that, for $1\leq i\leq u-1$, if $D_i=[(a,b),(c,d)]$ then $D_{i+1}$ is either equal to $[(a+m,b),(c+m,d)]$ or to $[(a,b+m),(c,d+m)]$. Thus, the bottom-left corner of $D_{i+1}$ is obtained from the bottom-left corner of $D_i$ by adding $m$ to exactly one of its coordinates, so the $m$-cells are disjoint, but consecutive cells are touching. The \emph{length} of the up-right $m$-path $D_1,\dots,D_u$ is $u$. An \emph{up-right path} is simply an up-right $1$-path, and we do not distinguish $1$-cells from sites.

The next lemma is the key step in the proof of the lower bound of the large $p$ theorem, and one of the most important lemmas in the paper. The bootstrap process is restricted to the positive quadrant of the plane and we ask how likely it is that the origin is uninfected at time $t$. We show that the answer is that it is roughly the same as the probability of there being an empty single row or column of length about $t$ starting at the origin. This latter event clearly implies that the origin is uninfected at time (about) $t$, so the interest is that the contribution to the probability from other configurations which also guarantee the origin is uninfected at time $t$ is small.

The idea behind the proof is as follows. If the origin is uninfected at time $t$ then there must exist an up-right path of length $t$ of initially uninfected sites starting at the origin. Unfortunately the crude way of estimating the probability of this event --- by taking a union bound over all paths --- gives much too large an estimate; we get $2^t(1-p)^t$, and the problem here lies in the combinatorial factor of $2^t$. To overcome this, we tile the positive quadrant with $L$-cells and wait an initial time $t'=BL/p$ so that all good $L$-cells will have filled (possibly except for their edges). We then look at the original up-right path of initially uninfected sites, and observe that the first $t-t'$ sites in that path (counting from the origin) must be uninfected at time $t'$. Now consider the up-right $L$-path of length $(t-t')/L$ induced by our up-right path of length $t-t'$. Each $L$-cell either is bad or intersects the up-right path only on its edges. In the first case we gain a probability of $(1-p)^{2L}$ (up to a polynomial correction) from Lemma \ref{le:etaineq2}. There is still a combinatorial factor involved in choosing the $L$-cells, but it is smaller than before, and is beaten by the gain in probability of a factor of $(1-p)^{L}$ over what we would have obtained had the path of uninfected sites stayed on the edge of the quadrant (this is where we use Lemma \ref{le:ratio}). In the second case, if $l$ is the total length of the path along edges of $L$-cells (which need not be consecutive) then we obtain a probability close to $(1-p)^{2l}$, because the up-right path is restricted to long, straight segments, and these must be part of double empty rows or columns. Furthermore, the highly restricted nature of the path also implies that there is only a small combinatorial loss. In both cases, the probability is much smaller than it would have been had the up-right path remained on one of the edges of the quadrant.

\begin{lemma}\label{le:upright}
Let $p\in(0,1)$ and $t\in\N$, and define $t'=BL(p)/p$, $D=[(0,0),(t,t)]$ and let $A\sim\bin(D,p)$. Then the probability that $(0,0)$ is uninfected at time $t$ is at most
\[
\frac{16(1-p)^{t-t'}}{p}.
\]
\end{lemma}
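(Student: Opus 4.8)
# Proof Proposal for Lemma~\ref{le:upright}

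The plan is to follow the strategy outlined in the paragraph preceding the lemma. First I would dispose of the trivial case: if $t \le t'$, then the bound $16(1-p)^{t-t'}/p \ge 16/p$, which is at least $1$ (since $p \le 1$ forces $16/p \ge 16$), so the inequality holds automatically. Hence assume $t > t'$ from now on. The starting observation is that if $(0,0)$ is uninfected at time $t$ in the bootstrap process on $D$, then there is an up-right path $x_0 = (0,0), x_1, \dots, x_{t}$ of sites in $D$, all of which are uninfected at time $t$ — this is the standard fact that an uninfected site at time $t$ has an uninfected up-neighbour or right-neighbour uninfected at time $t-1$, iterated $t$ times (the path stays inside $D$ because it only moves up or right from the corner). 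In particular the first $t - t'$ sites $x_0, \dots, x_{t-t'}$ are uninfected at time $t' = BL/p$, hence certainly not contained in any strongly good $L$-cell, and more to the point not in the interior of any good $L$-cell that filled by time $t'$.

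Next I would overlay the grid of $L$-cells tiling (a region containing) $D$, and consider the up-right $L$-path $\mathcal{D} = D^{(1)}, \dots, D^{(u)}$ of $L$-cells traversed by the site-path segment $x_0, \dots, x_{t-t'}$; here $u \ge \lfloor (t-t')/L \rfloor$ roughly, but more usefully the total number of sites of the path lying in $\bigcup_j D^{(j)}$ is at least $t - t'$. Classify each $L$-cell $D^{(j)}$ on this path as \emph{type A} if it is bad (i.e.\ $\good(D^{(j)})^c$ holds), and \emph{type B} otherwise. If $D^{(j)}$ is type B it is good, so its interior fills by time $BL/p = t'$; since our path sites are uninfected at time $t'$, the path can only meet $D^{(j)}$ along its edge $\partial D^{(j)}$. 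Write $b$ for the number of type-A cells on the path and $\ell$ for the total number of path sites lying on edges of type-B cells, so that (up to $O(1)$ boundary corrections at the junctions between cells) $bL + \ell \gtrsim t - t'$. The probability contribution is then estimated by a union bound over: (i) the shape of the $L$-path of cells; (ii) which cells are type A versus type B; (iii) within type-A cells, the event that they are bad, each contributing a factor $\eta_L \le 50 L^2 q^{-8} q^{2L}$ by Lemma~\ref{le:etaineq2}; and (iv) within type-B edge-segments, the event that the path sites there are uninfected. For (iv), a long straight run of uninfected sites along an edge of an $L$-cell must be part of an empty double row (or column): a single empty row of length $k$ adjacent to a full row below would fill in one step, so — crudely — a segment of length $k$ on a type-B edge forces roughly $2k$ empty sites, giving a factor $\approx q^{2\ell}$ overall, while the combinatorial freedom in placing these restricted segments is only polynomial (or at worst a small exponential absorbed by the slack).

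The main obstacle, and the reason for introducing $L$ via Definition~\ref{de:L} and Lemma~\ref{le:ratio}, is controlling the combinatorial factor from (i)--(ii) against the probability gains. A crude bound on the number of up-right $L$-paths of length $u$ is $2^u$, and choosing the type-A/type-B pattern costs another $2^u$; but each step of the $L$-path corresponds to $L$ sites of the original path, so per unit length of the original path the combinatorial cost is $(2 \cdot 2)^{1/L} = 4^{1/L}$, whereas each type-A cell gains a factor $\approx q^{2L}$ relative to the ``baseline'' $q^{L}$ one would get from a straight path on the axis — i.e.\ an \emph{extra} factor $q^L$ per type-A cell, which is $q^{L \cdot (1/L)} = q$ per unit length. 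The content of Lemma~\ref{le:ratio} is precisely that $(C L^2 q^{-8})^{1/L} q^{1/8} < 1$, i.e.\ the polynomial correction $50 L^2 q^{-8}$ in $\eta_L$, spread out over the $L$ sites of the cell, is dominated by a tiny fraction of the per-site gain $q$, leaving a net per-unit-length factor strictly below $q$. Summing the resulting geometric series over all possible values of $b$, $\ell$ and path shapes, and over the starting configuration, collapses everything to a bound of the form $\sum_{s \ge t-t'} (\text{const} \cdot q)^{s} \le C q^{t-t'}/(1 - Cq) \cdot (\text{correction})$; tracking the constants (the geometric-series sum contributes the $1/p$ since $1 - q^{\text{something}} \approx p$ at the relevant scale, and the numerical constants are absorbed into the $16$) yields the claimed bound $16(1-p)^{t-t'}/p$. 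The type-B analysis is conceptually easy but fiddly at the junctions between consecutive $L$-cells, where the path may weave between an edge and a neighbouring cell; I would handle this by noting that each such junction costs only a bounded factor and there are at most $O((t-t')/L)$ of them, so the total junction cost is at most $C^{(t-t')/L}$, again absorbed by the slack from Lemma~\ref{le:ratio}.
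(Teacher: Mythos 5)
Your proposal follows the paper's overall strategy — overlay an $L$-cell grid, classify cells as bad or semi-good, bound the bad cells via Lemma~\ref{le:etaineq2} and the semi-good segments via a ``double empty row'' observation, and invoke Lemma~\ref{le:ratio} to defeat the combinatorial factor and close the geometric series — but it omits a decomposition that the paper's proof relies on essentially, and without it a key probability estimate is simply false.

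The problem is the one the paper itself flags parenthetically: your claim that ``a segment of length $k$ on a type-B edge forces roughly $2k$ empty sites'' uses the fact that a single empty row adjacent to a full row would fill in one step, so the uninfected run must be part of a double empty row. This is correct only when the second row exists inside $D$. The up-right path begins at the corner $(0,0)$, and while it runs along the $x$- or $y$-axis of $D$, the row or column on the far side of the path is \emph{outside} $D$, so there only one empty site per path site is forced — giving $q^{\ell}$ there, not $q^{2\ell}$. In particular, the dominant event (the whole path a straight line on the axis) has probability $q^{t-t'}$, and any argument that purports to prove $\lesssim q^{2(t-t')}$ for it cannot be right. The paper handles this by introducing the split index $k$: the maximal index for which the site $x_{t-t'+1-k}$ has both coordinates positive, so that the first $t-t'-k$ sites lie on one axis. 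The axis segment is bounded crudely by $(1-p)^{t-t'-k}$; the $L$-cell and buffer machinery is applied only to the interior segment (so that the $r-1$ unoccupied buffers, of $2(L-2)$ sites each, between $r$ consecutive semi-good cells are guaranteed to lie inside $D$); and the final bound is the geometric series over $k=0,\dots,t-t'$ with ratio at most $(1-p)^{1/8}$ by Lemma~\ref{le:ratio}, which is exactly where the factor $16/p$ comes from. Your closing sum $\sum_{s\geq t-t'}(\mathrm{const}\cdot q)^{s}$ has no parameter playing the role of $k$ and does not match this structure. To repair the argument you need to (a) make the axis/interior split explicit, and (b) replace the loose ``$2k$ empty sites'' heuristic with the buffer count (noting also that a run of a \emph{single} semi-good cell yields only one empty edge and hence only $q^{L}$, which is why the paper tracks the parameter $v$ and proves $2u-v\leq 2\tau/3$).
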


\begin{proof}
Suppose the origin is uninfected at time $t$. If a site $y$ is such that both $y+e_1$ and $y+e_2$ are infected at some time $s$, then $y$ is certainly infected at time $s+1$. It follows that there exists an up-right path $x_1,\dots,x_{t-t'+1}$ of uninfected sites at time $t'$ with $x_1=(0,0)$. Let $k$ be maximal such that both coordinates of $x_{t-t'+1-k}$ are non-zero, or if there is no such $k$ then set $k=0$. Thus $k=0$ corresponds to the existence of an unoccupied straight line of length $t-t'+1$ with one endpoint at the origin. We shall show that the event $k=0$ is the most likely way of ensuring that the origin is uninfected at time $t$.

The up-right path $x_{t-t'-k},\dots,x_{t-t'+1}$ intersects an up-right $L$-path $D_1,\dots,D_\tau$, where $x_{t-t'-k}$ is the bottom-left site of $D_1$ and $\tau\geq k/L$. Since $x_1,\dots,x_{t-t'+1}$ are uninfected at time $t'=BL/p$, none of the $L$-cells $D_1,\dots,D_\tau$ is strongly good, so each is either semi-good or bad.

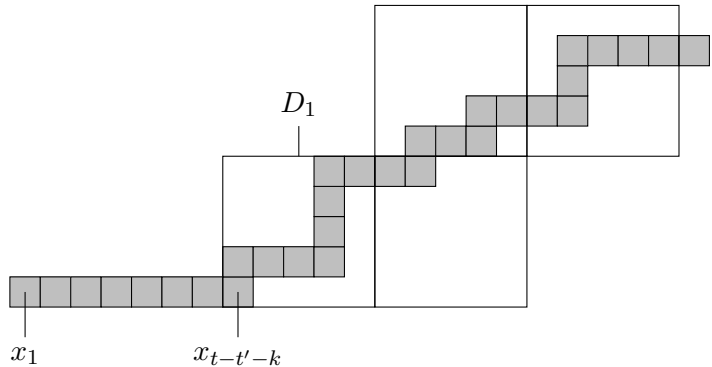
\begin{figure}[ht]
  \centering
  \begin{tikzpicture}[scale=0.4,>=latex]
    \tikzstyle{site}=[draw,fill=gray!50];
    \draw [site] (0,0) rectangle (1,1);
    \draw [site] (1,0) rectangle (2,1);
    \draw [site] (2,0) rectangle (3,1);
    \draw [site] (3,0) rectangle (4,1);
    \draw [site] (4,0) rectangle (5,1);
    \draw [site] (5,0) rectangle (6,1);
    \draw [site] (6,0) rectangle (7,1);
    \draw [site] (7,0) rectangle (8,1);
    \draw [site] (7,1) rectangle (8,2);
    \draw [site] (8,1) rectangle (9,2);
    \draw [site] (9,1) rectangle (10,2);
    \draw [site] (10,1) rectangle (11,2);
    \draw [site] (10,2) rectangle (11,3);
    \draw [site] (10,3) rectangle (11,4);
    \draw [site] (10,4) rectangle (11,5);
    \draw [site] (11,4) rectangle (12,5);
    \draw [site] (12,4) rectangle (13,5);
    \draw [site] (13,4) rectangle (14,5);
    \draw [site] (13,5) rectangle (14,6);
    \draw [site] (14,5) rectangle (15,6);
    \draw [site] (15,5) rectangle (16,6);
    \draw [site] (15,6) rectangle (16,7);
    \draw [site] (16,6) rectangle (17,7);
    \draw [site] (17,6) rectangle (18,7);
    \draw [site] (18,6) rectangle (19,7);
    \draw [site] (18,7) rectangle (19,8);
    \draw [site] (18,8) rectangle (19,9);
    \draw [site] (19,8) rectangle (20,9);
    \draw [site] (20,8) rectangle (21,9);
    \draw [site] (21,8) rectangle (22,9);
    \draw [site] (22,8) rectangle (23,9);
    \draw (7,0) rectangle (12,5);
    \draw (12,0) rectangle (17,5);
    \draw (12,5) rectangle (17,10);
    \draw (17,5) rectangle (22,10);
    \draw (0.5,0.5) -- (0.5,-1) node [below] {$x_1$};
    \draw (7.5,0.5) -- (7.5,-1) node [below] {$x_{t-t'-k}$};
    \draw (9.5,5) -- (9.5,6) node [above] {$D_1$};
  \end{tikzpicture}
  \caption{The shaded squares are sites forming an up-right path $(0,0)=x_1,x_2,\dots,x_{t-t'+1}$. In this example, $L=5$ and the four $5$-cells shown are the up-right $L$-path $D_1$, $D_2$, $D_3$, $D_4$.}\label{fi:upright}
\end{figure}

Let $E_2(i,j)$ denote the event that the $L$-cells $D_i,\dots,D_j$ are semi-good, and that at time $t'$ there exists an up-right path $y_1\dots,y_u$ of uninfected sites entirely contained within $\partial D_i\cup\dots\cup\partial D_j$, with $y_1$ an element of either the bottom or left edge of $D_i$ and $y_u$ an element of either the top or right edge of $D_j$.

Given an $m$-cell $D=[(a,b),(c,d)]$, let the \emph{left buffer} of $D$ be the $2\times(m-2)$ rectangle $[(a-1,b+1),(a,d-1)]$, and define similarly the \emph{right}, \emph{top} and \emph{bottom buffers} of $D$. Observe that, given adjacent $m$-cells $S_1$ and $S_2$ in an up-right $m$-path, either the right buffer of $S_1$ is the same as the left buffer of $S_2$, or the top buffer of $S_1$ is the same as the bottom buffer of $S_2$. Now suppose that among $D_1,\dots,D_\tau$ there is a sequence of $r$ consecutive semi-good cells $D_i,\dots,D_{i+r-1}$, so that the event $E_2(i,i+r-1)$ occurs. Let $\mathcal{B}$ denote the set of buffers of $D_i,\dots,D_{i+r-1}$, excluding the left and bottom buffers of $D_i$ and the top and right buffers of $D_{i+r-1}$. Since the interiors of $D_i,\dots,D_{i+r-1}$ are all full by time $t'$, the existence of an up-right path along the edges of $D_i,\dots,D_{i+r-1}$ of sites uninfected at time $t'$ implies that at least $r-1$ of the buffers in $\mathcal{B}$ were initially unoccupied. The reason for this is that if one considers sides of an $L$-cell to have unit length, then the $\ell_1$ distance between either the top-left or the bottom-right corner of $D_i$ and either the top-left or the bottom-right corner of $D_{i+r-1}$ is equal to $r-1$. Crucially, by the definition of $k$, these unoccupied buffers are all subsets of $D$. Each buffer is a set of $2(L-2)$ sites, so
\begin{equation}\label{eq:E2}
\P_p\big(E_2(i,i+r-1)\big) \leq 2^{r-1} (1-p)^{2(r-1)(L-2)}.
\end{equation}
(Had we chosen $k$ differently, some of the buffers may have been only half contained in $D$, which would render this bound incorrect. In other words, this is the point in the argument where, rather subtly, we use the fact that the up-right path has moved away from the boundary of $D$.) The bound in \eqref{eq:E2} does not give any information when $r=1$, but in that case we still have $\P_p\big(E_2(i,i)\big)\leq 4(1-p)^L$, since $D_i$ is only semi-good, so at least one of its edges is empty.

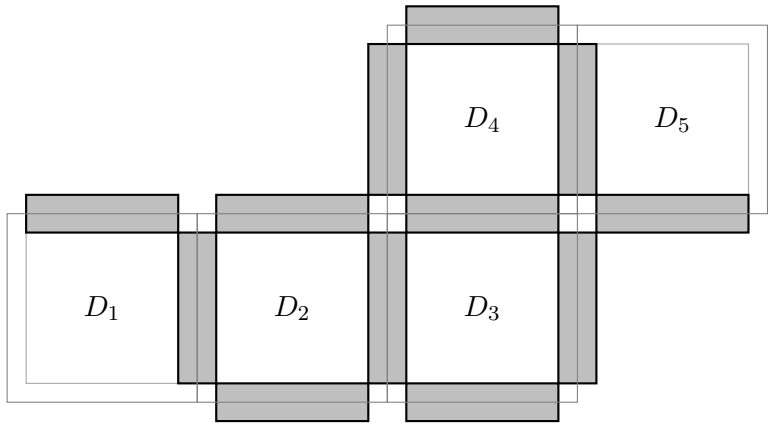
\begin{figure}[ht]
  \centering
  \begin{tikzpicture}[scale=0.5,>=latex]
    \tikzstyle{buffer}=[thick,fill=gray!50];
    \tikzstyle{cell}=[gray];
    \tikzstyle{int}=[gray!70];
    \draw [int] (0.5,0.5) rectangle (4.5,4.5);
    \draw [int] (5.5,0.5) rectangle (9.5,4.5);
    \draw [int] (10.5,0.5) rectangle (14.5,4.5);
    \draw [int] (10.5,5.5) rectangle (14.5,9.5);
    \draw [int] (15.5,5.5) rectangle (19.5,9.5);
    \draw [buffer] (5.5,-0.5) rectangle (9.5,0.5);
    \draw [buffer] (10.5,-0.5) rectangle (14.5,0.5);
    \draw [buffer] (4.5,0.5) rectangle (5.5,4.5);
    \draw [buffer] (9.5,0.5) rectangle (10.5,4.5);
    \draw [buffer] (14.5,0.5) rectangle (15.5,4.5);
    \draw [buffer] (0.5,4.5) rectangle (4.5,5.5);
    \draw [buffer] (5.5,4.5) rectangle (9.5,5.5);
    \draw [buffer] (10.5,4.5) rectangle (14.5,5.5);
    \draw [buffer] (15.5,4.5) rectangle (19.5,5.5);
    \draw [buffer] (9.5,5.5) rectangle (10.5,9.5);
    \draw [buffer] (14.5,5.5) rectangle (15.5,9.5);
    \draw [buffer] (10.5,9.5) rectangle (14.5,10.5);
    \draw [cell] (0,0) rectangle (5,5);
    \draw [cell] (5,0) rectangle (10,5);
    \draw [cell] (10,0) rectangle (15,5);
    \draw [cell] (10,5) rectangle (15,10);
    \draw [cell] (15,5) rectangle (20,10);
    \node at (2.5,2.5) {$D_1$};
    \node at (7.5,2.5) {$D_2$};
    \node at (12.5,2.5) {$D_3$};
    \node at (12.5,7.5) {$D_4$};
    \node at (17.5,7.5) {$D_5$};
  \end{tikzpicture}
  \caption{An up-right $L$-path of length five. Buffers in the set $\mathcal{B}$ for this path are the shaded rectangles.}\label{fi:semi}
\end{figure}

Let $E_1(i,j)$ denote the event that all of the $L$-cells $D_i,\dots,D_j$ are bad. By Lemma \ref{le:etaineq2}, the probability of $E_1(i,i+r-1)$ is at most $f(p)^r (1-p)^{2Lr}$, where $f(p)=50L^2q^{-8}$.

Now, there exists a finite sequence $0=b_1<s_1<b_2<s_2<b_3<\dots$, where the last term is equal to $\tau$, such that the event
\[
E = E_1(b_1,s_1-1) \cap E_2(s_1,b_2-1) \cap E_1(b_2,s_2-1) \cap E_2(s_2,b_3-1) \cap \dots
\]
occurs. Suppose that the last term of the sequence is $\tau=b_{u+1}-1$; the argument is similar if $\tau=s_{u+1}-1$. Let $v$ be the number of $i$ for which $b_{i+1}-1=s_i$; thus, $v$ is the number of times that there are three consecutive $L$-cells in the sequence $D_1,\dots,D_\tau$ that are of the form bad, semi-good, bad, in that order. We have
\begin{align*}
\P_p(E) &= \prod_{i=1}^u \P\big( E_1(b_i,s_i-1) \cap E_2(s_i,b_{i+1}-1) \big) \\
&\leq 4^v (1-p)^{Lv} \prod_{i=1}^u f(p)^{s_i-b_i} (1-p)^{2L(s_i-b_i)} 2^{b_{i+1}-s_1-1} (1-p)^{2(L-2)(b_{i+1}-s_i-1)} \\
&\leq \big(8f(p)\big)^\tau (1-p)^{(L-2)(2\tau - 2u + v)}.
\end{align*}
By partitioning sequences of consecutive semi-good $L$-cells into those of length $1$ and those of length greater than $1$, and since $2v+3(u-v)\leq\tau$, it follows that $2u-v\leq 2\tau/3$. Thus,
\[
\P_p(E) \leq \big(8f(p)\big)^\tau (1-p)^{4(L-2)\tau/3}.
\]

For a given $k$, and hence a given $\tau$, there are $2^\tau$ choices of up-right path of $L$-cells, and a further $2^\tau$ ways of choosing whether each $L$-cell is semi-good or bad. Therefore the probability that there exists an up-right path of uninfected sites of length $k$ starting from a given site is at most
\[
\big(32f(p)\big)^{k/L} (1-p)^{4k(1-2/L)/3} \leq \big(32f(p)\big)^{k/L} (1-p)^{5k/4}.
\]
Hence, the probability of the event $F$, which we define to be that the origin is uninfected at time $t$ in bootstrap percolation on the square $D$, is at most
\[
2 \sum_{k=0}^{t-t'} (1-p)^{t-t'-k} \big(32f(p)\big)^{k/L} (1-p)^{5k/4}.
\]
By taking $p_0$ (in the definition of $K$) sufficiently small and $A$ (in the definition of $L$) sufficiently large, and applying Lemma \ref{le:ratio}, the common ratio in the geometric series above, which is $\big(32f(p)\big)^{1/L}(1-p)^{1/4}$, has value at most $(1-p)^{1/8}$. Therefore,
\[
\P_p(F) \leq \frac{2 (1-p)^{t-t'}}{1-(1-p)^{1/8}} \leq \frac{16 (1-p)^{t-t'}}{p}. \qedhere
\]
\end{proof}

It is now just a small step to proving the upper bound in Theorem \ref{th:large}. We apply Lemma \ref{le:upright} to each of the four sites in a $2\times 2$ square of sites uninfected at time $t-2$ (which is possible if there is a site not too close to the boundary which is uninfected at time $t$). The lemma implies that the probability these four sites stay uninfected that long is approximately the same as the probability that they are initially at the centre of a double empty row or column of length about $2t$, which is what we require. There is a little more work to do to take into account the sites near the boundary of the grid (this would not be necessary if we were working on the torus rather than the grid). These sites have a greater probability of being uninfected at time $t$, but this is negated by the relatively small number of them.

\begin{proof}[Proof of Theorem \ref{th:large}]
The lower bound of Theorem \ref{th:large} is Lemma \ref{le:lb}, so we only have to prove the upper bound.

Let $t'=BL/p+2$. Suppose a site $x$ is uninfected at time $t\geq 2$, and suppose first that $x$ is not within distance $t$ of the boundary of $[n]^2$. It is easy to check that $x$ must be contained in a $2\times 2$ square of sites uninfected at time $t-2$, say $x_1$, $x_2$, $x_3$ and $x_4$. Let $D_1$, $D_2$, $D_3$ and $D_4$ be the four $t$-cells such that $x_i\in D_i$ for each $i$ and $x_i\notin D_j$ if $j\neq i$. Since $x_i$ is uninfected at time $t-2$ in bootstrap percolation with initial set $A\sim\bin([n]^2,p)$, it is also uninfected at time $t-2$ when the initial set is restricted to $A\cap D_i$. Applying Lemma \ref{le:upright} to each $D_i$ in turn, we find the probability that $x_1$, $x_2$, $x_3$ and $x_4$ are all uninfected at time $t-2$ is at most
\[
\left(\frac{16 (1-p)^{t-t'}}{p}\right)^4.
\]

It follows that the probability there exists a site $x$, which is uninfected at time $t$, but which is not within distance $t$ of the boundary of $[n]^2$, is at most
\begin{equation}\label{eq:xmiddle}
4n^2 \left( \frac{16 (1-p)^{t-t'}}{p} \right)^4 = 4 \exp \big( 2\log n + 4\log 1/p - 4(t-t')\log 1/q + O(1) \big).
\end{equation}
This is $o(1)$ if
\begin{equation}\label{eq:tt'}
t-t' \geq \frac{(1+\epsilon)\log n}{2\log 1/q}.
\end{equation}

When $x$ is close to the boundary of $[n]^2$, the calculation is similar. The probability that $x$ is uninfected at time $t$ is much larger, but to compensate for this there are fewer choices for $x$. Briefly, there at most $4nt$ sites within distance $t$ of one of the sides of $[n]^2$, but not within the same distance of one of the corners. Each such site which is uninfected at time $t$ has an adjacent site in an appropriate direction which is uninfected at time $t-1$. Applying Lemma \ref{le:upright} to this pair of sites and taking the union bound gives the probability that any of these sites is uninfected at time $t-1$ is at most
\begin{equation}\label{eq:xside}
8nt \left( \frac{16 (1-p)^{t-t'}}{p} \right)^2 = \exp \big( \log n - 2(t-t')\log 1/q + o(\log n) \big) = o(1),
\end{equation}
if \eqref{eq:tt'} holds. Similarly, there are at most $4t^2$ sites within distance $t$ of one of the corners of $[n]^2$, and by Lemma \ref{le:upright}, each has probability at most $16 (1-p)^{t-t'}/p$ of being uninfected at time $t$. Taking a union bound, the probability any of these is uninfected at time $t$ is at most
\begin{equation}\label{eq:xcorner}
4t^2\frac{16 (1-p)^{t-t'}}{p} = \exp\big( -(t-t')\log 1/q + o(\log n) \big) = o(1),
\end{equation}
if \eqref{eq:tt'} is satisfied.

Combining \eqref{eq:xmiddle}, \eqref{eq:xside} and \eqref{eq:xcorner}, recalling that $t'=BL/p+2=O\big(K^2(\log 1/q)/p\big)$, and using the notation of the statement of the theorem, we have
\[
T \leq \frac{(1+o(1))\log n}{2\log 1/q} + O\bigg(\frac{K^2\log 1/q}{p}\bigg)
\]
with high probability as $n$ tends to infinity. The deduction of the upper bound in Theorem \ref{th:large} from this statement is simply the assertion is that if $\liminf p\log\log n > 2\lambda$ then
\[
\frac{K^2\log 1/q}{p} \ll \frac{\log n}{\log 1/q},
\]
which is an easy computation. This completes the proof of Theorem \ref{th:large}.
\end{proof}

\section{Upper bound for small $p$}\label{se:uppersmall}

In this section we prove the upper bound of Theorem \ref{th:small}. Recall that we describe the range of $p$ for which Theorem \ref{th:small} applies (that is, $\liminf p\log n > \lambda$ and $p=o(1)$) as the ``small $p$ regime''.

Several of the lemmas we shall use in the proof of the upper bound in the small $p$ regime are similar to the lemmas used in the proof of the upper bound in the large $p$ regime. In fact, some (those which were covered in Section \ref{se:critgrids}) are identical, and for the rest (those which were covered in Section \ref{se:large}), we observe, omitting most of the details, that only small modifictions are required to adapt them to the small $p$ setting.

Our first lemma is an analogue to Lemma \ref{le:etaineq} in which ``bad'' is replaced by ``weakly bad'' (or equivalently, ``good'' is replaced by ``strongly good''). It is worth recalling that an $m$-cell was defined to be \emph{bad} if its interior is not contained in the span of the whole cell by time $Bm/p$, while it is \emph{weakly bad} if it is not internally spanned by the same time. Thus, the property of being bad is a stronger property of an $m$-cell than that of being weakly bad. (It is also worth recalling that we write $\theta_m$ for the probability that an $m$-cell is weakly bad.)

\begin{lemma}\label{le:thetaineq}
If $B\geq 50$, then for all $m\geq 1$ we have
\[
\theta_{2m} \leq \theta_m^4 + 50m^2 q^{2m}.
\]
\end{lemma}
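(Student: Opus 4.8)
The plan is to mimic closely the proof of Lemma~\ref{le:etaineq}, replacing ``good/bad'' by ``strongly good/weakly bad'' throughout, and to exploit the fact that the stronger hypothesis (the \emph{whole} cell, including its edges, is internally spanned) buys us the full power $q^{2m}$ in the error term rather than the weaker $q^{4m-8}$ appearing in Lemma~\ref{le:etaineq}. First I would take a $2m$-cell $D$ that is weakly bad and split it into four disjoint $m$-cells $D_1,D_2,D_3,D_4$ as in the earlier proof. Either all four are weakly bad — contributing $\theta_m^4$ — or at least one of them, say $D_1$, is strongly good, and then I must show that the probability that $D$ is nonetheless weakly bad is $O(m^2 q^{2m})$.

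For the second case the key point is that $D_1$ being strongly good means $D_1$ is \emph{fully} internally spanned by time $Bm/p$ (not merely its interior), so in order to internally span all of $D$ by time $B\cdot 2m/p$ it suffices to grow the infection from the full sub-square $D_1$ into the rest of $D$. As in the proof of Lemma~\ref{le:etaineq} I would set up a small collection of intermediate droplets $S_i$ of dimensions roughly $(m\pm1,m\mp2)$ covering $D\setminus D_1$, and ask that among each disjoint pair at least one is traversable and that no $S_i$ is traversable but not quickly traversable. Since $D_1$ is full (rather than full-except-edges), the relevant droplets can be taken to abut $D_1$ directly and the ``column immediately below/beside $S$ is full'' hypothesis in the definition of quickly traversable is genuinely available; there is no loss of a boundary double-row. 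The probability that two disjoint $S_i$ fail to be traversable is $O(m^2 q^{2m})$ — here we gain the full $2m$ exponent because a non-traversable $S_i$ now forces an empty double row (or column) of length $\approx m$ entirely inside the appropriate region, i.e. roughly $2m$ empty sites rather than the $2(2m-4)$ we needed before only because the interior was smaller. The probability that some $S_i$ is traversable but not quickly traversable is handled verbatim as in Lemma~\ref{le:etaineq}: the same Chernoff/Binomial estimate \eqref{eq:binombound} gives a bound of the form $C\exp(-cm\log 1/q)$, which for the constant $B\geq 50$ is comfortably $O(q^{2m})$ and in fact negligible against $m^2q^{2m}$.

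Combining the two cases and noting the factor $4$ from the choice of the strongly good sub-square, one gets $\theta_{2m}\leq\theta_m^4 + Cm^2q^{2m}$ with an absolute constant $C$; tracking the constants exactly as in the earlier proof (the ``$15$ disjoint pairs'' count and the six $S_i$) yields $C\leq 50$, giving the claimed inequality $\theta_{2m}\leq\theta_m^4+50m^2q^{2m}$.

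The main obstacle, and the only place real care is needed, is verifying that in the second case the geometry genuinely delivers the exponent $2m$ rather than something like $2m - O(1)$ or $4m-8$: one must check that a non-traversable intermediate droplet forces an empty double row/column whose \emph{every} site lies inside $D$ and has length at least $m$ (so at least $2(m - O(1))$ sites, and the $O(1)$ is absorbed into the polynomial prefactor), and that the strong-goodness of $D_1$ — full, not full-except-edges — really does let the intermediate droplets sit flush against it so that quick traversability propagates the infection with no dropped row. Once the bookkeeping of which rows are ``given full'' is set up correctly, the rest is the same Chernoff computation as before, and I would simply refer to the proof of Lemma~\ref{le:etaineq} for those routine estimates rather than repeat them.
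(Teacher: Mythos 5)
Your high-level plan (split $D$ into $D_1,\dots,D_4$, handle the all-weakly-bad case with $\theta_m^4$, otherwise grow from a strongly good $D_1$) matches the paper, but your route through the second case diverges from the paper's and contains an arithmetic inconsistency that needs fixing.

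The paper does not carry over the intermediate droplets $S_1,\dots,S_6$ at all. Since $D_1$ is now \emph{fully} spanned rather than merely having a full interior, the paper simply redefines traversability so that it applies directly to the $m$-cells $D_2,D_3,D_4$: a cell $D_i$ is traversable if every \emph{single} row and column of $D_i$ is occupied. Two non-traversable cells among $D_2,D_3,D_4$ then have probability at most $\binom{3}{2}(2mq^m)^2 = 12m^2q^{2m}$, and this is the whole source of the $q^{2m}$ error term. Your sketch instead keeps $S_i$-type droplets with a double-row criterion, asserts that a non-traversable $S_i$ forces ``roughly $2m$ empty sites,'' and then claims that two disjoint non-traversable $S_i$ have probability $O(m^2 q^{2m})$. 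Those two statements are incompatible: two disjoint blocks of $\approx 2m$ empty sites would give probability $q^{\approx 4m}$, not $q^{2m}$. Either you mean single rows of length $\approx m$ (in which case you are really doing what the paper does, just phrased around the wrong droplets), or you really mean double rows (in which case you would prove a stronger bound of order $q^{4m-O(1)}$, but your stated $q^{2m}$ is not what the calculation yields). Relatedly, your framing that one ``gains the full $2m$ exponent rather than $2(2m-4)$'' is backwards: $q^{2m}$ is a \emph{weaker} bound than $q^{4m-8}$ for $m>4$. The move from $4m-8$ to $2m$ is not a gain; it is the price of switching from double rows of width $m-2$ to single rows of width $m$, a switch the paper makes because it wants the spread to reach all of $D$, including edges and corners.

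That last point is also a gap in your version as written. Droplets of dimensions $(m\pm1,m\mp2)$, together with $D_1$, do not cover the edges and corners of $D$; but $\strongly(D)$ requires all of $D$ to be spanned, not just $\interior(D)$. The paper's replacement of the $S_i$ by the full $m$-cells $D_2,D_3,D_4$ is precisely what disposes of this issue. If you insist on keeping $S_i$-style droplets, you must either widen them to the full width $m$ or add a separate argument for the boundary strips of $D$, and you would then need to redo the traversability bookkeeping from scratch rather than importing it verbatim from Lemma~\ref{le:etaineq}.
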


\begin{proof}
The proof of this lemma is similar to the proof of Lemma \ref{le:etaineq}, except many of the details are simpler and so we only give a sketch. The advantage here is that we may assume that $D_1$ is strongly good, not just good, and this allows us to modify the meaning of \emph{traversable} so that it applies to the $m$-cells $D_1,\dots,D_4$, not to the $(m+1)\times(m-2)$ droplets $S_1,\dots,S_6$, and so that $D_i$ is traversable if all its \emph{single} rows and columns are occupied. Then the probability that at least two of the $m$-cells $D_2$, $D_3$, $D_4$ are not traversable is at most $12m^2(1-p)^{2m}$. The remainder of the proof is the same as that of Lemma \ref{le:etaineq}.
\end{proof}

The next definition and the lemma following it are the analogues of Definition \ref{de:L} (of the grid length $L$) and Lemma \ref{le:etaineq2}.

\begin{definition}\label{de:M}
Let $A$ be a large constant. For $n\geq K$, we define
\[
M := \max\left\{ A\sqrt{p\log(n/K)}K \, , \, A\log(n/K) \right\},
\]
where, as before, $K$ is the function defined in Definition \ref{de:K}.
\end{definition}

The definition of $M$, like the expression for $T$ in Theorem \ref{th:small}, is a maximum of two terms. (Of course, this is not a coincidence: Theorem \ref{th:small} says precisely that $T=\Theta(M/p)$.) As remarked in the introduction to the paper, the second of the two terms in the maximum (which there we called $t_2(n,p)$, so here it would be $pt_2(n,p)$) is only larger than the first, and therefore only relevant, when $\limsup p\log\log n\geq 2\lambda$. Thus, in the range in which Theorem \ref{th:large} does not supersede Theorem \ref{th:small}, the second term is only relevant when $p$ is approximately equal to $2\lambda/\log\log n$.

\begin{lemma}\label{le:thetaineq2}
There exist constants $c,C>0$ such that if $p$ is sufficiently small then the probability an $M$-cell is weakly bad satisfies
\[
\theta_M \leq \max\Big\{ \exp\big(-cp\log(n/K)\big) , \exp(-cpM) \Big\}.
\]
\end{lemma}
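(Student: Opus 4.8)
The plan is to mirror the proof of Lemma~\ref{le:etaineq2}: iterate the recursion of Lemma~\ref{le:thetaineq} from the base case $\theta_K\le 1/2$ (the defining property of the critical grid size $K$, valid for $B\ge 50$) up to the scale $M$. For bookkeeping it is convenient to assume that $M$ is an integer power of $2$ times $K$; since $A$ is a free large constant this loses nothing, and in any case one can instead pass from a slightly-smaller-than-$M$ cell to an $M$-cell by the argument of Lemma~\ref{le:thetaineq} itself (an $M$-cell is strongly good once the $2^rK$-cell in a corner is strongly good and the complementary L-shaped strip has no empty row or column), which adds only a negligible additive term of the form $CM^2q^{cM}$.

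By Lemma~\ref{le:thetaineq} and $x+y\le 2\max\{x,y\}$ we have $\theta_{2m}\le\max\{2\theta_m^4,\,100m^2q^{2m}\}$ for every $m\ge 1$. Put $\alpha_r=2^{-(2/3)4^r-1/3}$ and $\beta_r=100(2^{r-1}K)^2q^{2^rK}$. Starting from $\theta_K\le 1/2=\alpha_0$, I would prove by induction that $\theta_{2^rK}\le\max\{\alpha_r,\beta_r\}$ for all $r\ge 1$. In the inductive step one uses $2\theta_{2^rK}^4\le\max\{2\alpha_r^4,2\beta_r^4\}$ together with the fresh error term $100(2^rK)^2q^{2^{r+1}K}=\beta_{r+1}$ produced at scale $2^{r+1}K$; since $2\alpha_r^4=\alpha_{r+1}$ is an identity, everything reduces to checking $2\beta_r^4\le\beta_{r+1}$. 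After cancelling $q^{2^{r+1}K}$ this takes the shape $C\,2^{6r}K^6q^{2^{r+1}K}\le 1$, and it holds once $p$ is small because $K=e^{\mu(p)/p}$ forces $q^{2^{r+1}K}=\exp(-2^{r+1}K\log 1/q)$ to decay doubly exponentially in $r$, easily absorbing both $2^{6r}$ and $K^6=e^{6\mu(p)/p}$.

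It remains to evaluate the two terms at $m=M$, i.e.\ at $4^r=(M/K)^2$. First, $\alpha_r\le\exp\big(-(2/3)(\log 2)(M/K)^2\big)$, and Definition~\ref{de:M} gives $(M/K)^2\ge A^2p\log(n/K)$ in both cases of the maximum --- with equality when the first term dominates, and because the second term dominates only when $\log(n/K)\ge pK^2$ --- so $\alpha_r\le\exp(-cp\log(n/K))$ with $A$ large. Second, $\beta_r=25M^2q^M\le 25M^2\exp(-pM)$ since $\log 1/q\ge p$, and $pM\gg\log M$ (immediate from Definition~\ref{de:M}: either $M\ge A\sqrt{p\log(n/K)}K$ or $M\ge A\log(n/K)$ with $\log(n/K)\ge pK^2$, and in both cases $pM$ grows at least like a fixed power of $pK=pe^{\mu(p)/p}\to\infty$); hence $\beta_r\le\exp(-pM/2)$. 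Combining these with the reduction in the first paragraph yields $\theta_M\le\max\{\alpha_r,\beta_r\}\le\max\{\exp(-cp\log(n/K)),\exp(-cpM)\}$, as required.

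I expect the step $2\beta_r^4\le\beta_{r+1}$ to be the only genuine obstacle: one must verify that the error terms grow fast enough with $r$ that it suffices to carry only the error from the last doubling, which is precisely the assertion made without comment in Lemma~\ref{le:etaineq2} and is where smallness of $p$ and the exact form $K=e^{\mu(p)/p}$ are used. Everything else --- the passage from $2^rK$-cells to an $M$-cell, and checking $pM\gg\log M$ and $(M/K)^2\ge A^2p\log(n/K)$ in each regime of Definition~\ref{de:M} --- is routine, though it should be done carefully enough to see why the statement is naturally phrased with both terms of the maximum present, the term $\exp(-cpM)$ being the one produced by $\beta_r$ once the doubling has run past the critical scale (which is of order $pK^2$).
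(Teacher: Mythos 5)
Your proof is correct and takes essentially the same route as the paper's: iterate the recursion of Lemma \ref{le:thetaineq} from the base case $\theta_K\le 1/2$, observe that the freshly produced error term $\beta_{r+1}$ dominates the propagated one $2\beta_r^4$ (the paper hints at this via the remark that $50K^2q^{2K}$ is small for small $p$, but does not spell out the inequality $2\beta_r^4\le\beta_{r+1}$ as you do), evaluate at $2^rK=M$, and finish with $(M/K)^2\ge A^2p\log(n/K)$ and $pM\gg\log M$. Your treatment is in fact a bit more careful than the paper's, both in closing the induction (your $\alpha_r=2^{-(2/3)4^r-1/3}$ makes $2\alpha_r^4=\alpha_{r+1}$ exact, whereas the paper's $2^{-(2/3)4^r}$ is off by a harmless constant) and in addressing the rounding issue when $M$ is not exactly $2^rK$, which the paper silently ignores.
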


\begin{proof}
This time notice that the quantity $50K^2q^{2K}$ can be made arbitrarily small by taking $p$ sufficiently small. As in the proof of Lemma \ref{le:etaineq2}, we obtain
\[
\theta_{2^r K} \leq \max\big\{ 2^{-(2/3)4^r} , 100 (2^{r-1}K)^2q^{2\cdot 2^{r-1}K} \big\}.
\]
and hence
\begin{equation}\label{eq:thetamax}
\theta_M \leq \max\Big\{ \exp\big(-c(M/K)^2\big) , C(M/K)^2\exp\big(-c(M/K)Kp\big) \Big\}
\end{equation}
for constants $c,C>0$. Now $M\geq A\sqrt{p\log(n/K)}K$ by definition, so $(M/K)^2\geq A^2p\log(n/K)$. Also, $pM\gg\log M$. Hence
\[
\theta_M \leq \max\Big\{ \exp\big(-cp\log(n/K)\big) , \exp(-cpM) \Big\}
\]
with a different constant $c$.
\end{proof}

The final lemma we need before we can prove the upper bound in Theorem \ref{th:small}, and the only without an analogue in the large $p$ regime, is the following result which we shall use to bound the probability that there exists a large connected component of weakly bad $L$-cells. A proof can be found in \cite[pp.~129--132]{CiM}.

\begin{lemma}\label{le:coffeetime}
Let $G$ be a graph with maximum degree $d$. Then the number of connected induced subgraphs of $G$ of order $k$ that contain a given vertex is at most $(e(d-1))^k$.\qed
\end{lemma}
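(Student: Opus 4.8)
Looking at this, the statement to prove is Lemma~\ref{le:coffeetime}: the number of connected induced subgraphs of order $k$ containing a given vertex in a graph with maximum degree $d$ is at most $(e(d-1))^k$. Let me sketch a proof plan.

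\medskip

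The plan is to exploit a classical "exploration" or "encoding" argument: every connected induced subgraph of order $k$ containing a fixed vertex $v$ can be recovered from a suitable spanning tree rooted at $v$, and such trees can be counted by bounding the number of ways to grow them one vertex at a time. First I would reduce to counting rooted subtrees of $G$ of order $k$ rooted at $v$: any connected induced subgraph $H \ni v$ has at least one spanning tree containing $v$, so it suffices to bound the number of subtrees $T \subset G$ of order $k$ with $v \in V(T)$ (this overcounts, but that only helps). Then I would set up a canonical traversal of such a tree --- for instance a depth-first search from $v$ using a fixed total order on $V(G)$ --- which produces a closed walk of length $2(k-1)$ that traverses each edge of $T$ exactly twice and determines $T$. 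At each of the $2(k-1)$ steps the walk either moves to a not-yet-visited neighbour of the current vertex (at most $d-1$ choices, since one neighbour is the vertex we came from --- with a small adjustment at the root, where there are at most $d$ choices) or backtracks along the unique edge to the parent (one choice, and we can tell which case we are in from the DFS rule together with knowing which vertices have already been visited).

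\medskip

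The cleanest way to make this precise, and the route I would actually take, is a generating-function / Catalan-number bookkeeping argument. Encode the DFS walk as a sequence of "down" and "up" moves forming a Dwyck-type path of $k-1$ down-steps and $k-1$ up-steps; there are $\binom{2(k-1)}{k-1} \leq 4^{k-1}$ such shape sequences, and given the shape, each down-step has at most $d-1$ choices of new vertex (at most $d$ at the very first step) while each up-step is forced. Hence the number of rooted subtrees of order $k$ at $v$ is at most $4^{k-1} \cdot d \cdot (d-1)^{k-2} \leq 4^k (d-1)^k$, provided $d \geq 2$ (the cases $d \leq 1$ being trivial, as then $k=1$ is forced for $k\geq 2$ components to exist at all, or the bound holds vacuously). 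Since $4 < e \cdot \frac{e}{\text{?}}$... I should instead aim for the sharper bookkeeping: replacing $4^{k-1}$ by $\binom{2k-2}{k-1}$ and using $\binom{2k-2}{k-1} \leq \frac{4^{k-1}}{\sqrt{\pi(k-1)}}$, together with a more careful accounting that each of the $k-1$ "new vertices" contributes a factor $(d-1)$ while the shape contributes the Catalan-like factor, one arrives at a bound of the form $\frac{1}{k}\binom{2k-2}{k-1}(d-1)^{k-1} \cdot d$, and then $\frac{1}{k}\binom{2k-2}{k-1} \leq \frac{4^{k-1}}{k}$; combined with $d \leq e(d-1)$ for $d\geq 2$ and $4/e < e$, a short estimate gives $(e(d-1))^k$.

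\medskip

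The main obstacle --- and the only place needing genuine care --- is getting the constant down from the easy $4(d-1)$ per vertex to $e(d-1)$ per vertex. The factor $(d-1)$ is essentially free from the maximum-degree hypothesis; the issue is absorbing the combinatorial factor counting DFS shapes. This is handled by noting that the number of ordered rooted trees on $k$ vertices (equivalently, the number of valid down/up shape sequences) is the Catalan number $C_{k-1} = \frac{1}{k}\binom{2k-2}{k-1} \leq 4^{k-1}/k$, and that in the encoding each unordered subtree of $G$ is counted with multiplicity at least one but the \emph{shape} and the \emph{vertex choices} together overcount, so we may bound the count of subtrees by $C_{k-1} \cdot d \cdot (d-1)^{k-2}$ at worst; then since $d \leq e(d-1)$ when $d \geq 2$ and $4^{k-1}/k \leq e^{k-1}$ for all $k \geq 1$ (an elementary inequality, as $4 \leq e^2$ is false --- so actually one must be slightly more careful and instead pair the $4^{k-1}$ with one factor of $(d-1)$, using $4(d-1) \leq e^2(d-1) $... ), the referenced argument in \cite[pp.~129--132]{CiM} presumably organizes exactly this trade-off. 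I would follow that reference's bookkeeping, whose essential content is the three ingredients above: (i) pass to spanning subtrees, (ii) encode each subtree by a DFS walk so that new vertices cost a factor $d-1$ each and the walk shape costs a Catalan factor, and (iii) estimate $C_{k-1} \leq 4^{k-1}$ and combine constants to reach $(e(d-1))^k$.
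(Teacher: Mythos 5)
The paper does not prove this lemma; it simply cites \cite[pp.~129--132]{CiM}, so there is no ``paper's own proof'' to compare against, only the external reference (Bollob\'as and Riordan's \emph{Percolation}).

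Your proposal has a genuine gap, and you have half-noticed it yourself. The count you arrive at after the DFS encoding is of the form
\[
C_{k-1}\cdot d\cdot(d-1)^{k-2},
\qquad
C_{k-1}=\frac{1}{k}\binom{2k-2}{k-1}\sim\frac{4^{k-1}}{\sqrt{\pi}\,k^{3/2}},
\]
and this is \emph{not} bounded by $(e(d-1))^{k}$. Comparing the two, the ratio is of order $(4/e)^{k}/k^{3/2}\cdot d/(d-1)^{2}$, which tends to infinity because $4>e$; concretely, already for $d=2$ and $k=20$ one has $C_{19}\cdot 2\approx 1.8\times 10^{9}$ while $e^{20}\approx 4.9\times 10^{8}$. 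So the bound you obtain is a \emph{valid} upper bound for the number of subtrees (it overcounts), but it is strictly weaker than what the lemma claims, and no juggling of constants will close the gap. The flaw is that multiplying the number of Dyck-path shapes by the number of per-vertex neighbour choices double-counts: once the DFS rule (``smallest unvisited neighbour first'') is fixed, the shape and the vertex sequence are not independent, and treating them as independent is exactly what produces the spurious $4^{k}$.

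The correct route, and the one the cited reference takes, is to count by mapping subtrees injectively into rooted subtrees of the infinite $(d-1)$-ary tree (with the root allowed up to $d$ children): fix a total order on $V(G)$, and send the $j$-th smallest non-parent neighbour used at each vertex to child slot $j$. This replaces the ordinary Catalan number $\binom{2k-2}{k-1}$ by the Fuss--Catalan count, which is governed by $\binom{(d-1)k}{k}$ rather than $\binom{2k}{k}(d-1)^{k}$. The elementary estimate
\[
\binom{(d-1)k}{k}\leq\left(\frac{e(d-1)k}{k}\right)^{k}=(e(d-1))^{k}
\]
then gives the lemma directly (with a small separate check that the root having $d$ rather than $d-1$ available slots is harmless). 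The reduction-to-spanning-trees step in your plan is fine; it is the subsequent bookkeeping that must be done Fuss--Catalan-style rather than Dyck-path-times-labels-style.
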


\begin{proof}[Proof of the upper bound in Theorem \ref{th:small}]
Tile $[n]^2$ with disjoint $M$-cells. After an initial time $t'=50M/p$, all uninfected sites will be contained in weakly bad $M$-cells. Consider the graph of $M$-cells in which there is an edge between two cells if they have a common side. Clearly this graph has maximum degree $4$. By Lemma \ref{le:coffeetime}, the probability there exists a connected component of weakly bad $M$-cells of order at least $k$ is at most
\begin{equation}\label{eq:probbadcomps}
\left(\frac{n}{M}\right)^2 (3e)^k \theta_M^k \leq \exp\big(2\log(n/M) + k\log\theta_M + O(k)\big).
\end{equation}
This quantity tends to zero if
\[
k \geq \frac{-3\log(n/M)}{\log\theta_M}.
\]

Recall from Lemma \ref{le:thetaineq2} that
\[
\theta_M \leq \max\Big\{ \exp\big(-cp\log(n/K)\big) , \exp(-cpM) \Big\}
\]
for constants $c,C>0$. Noting also that $\log(n/M)\geq\log(n/K)$, it follows that \eqref{eq:probbadcomps} is $o(1)$ provided $k$ satisfies
\[
k \geq \max\left\{ \frac{3}{cp},\frac{3\log(n/M)}{cpM} \right\}.
\]
So with $k$ equal to the maximum of these two expressions, with high probability the largest component of weakly bad $M$-cells has size at most $k$. Any component of $M$-cells has at least one cell with at least two sides not connected to the rest of the component, so given that all other cells are strongly good, that cell becomes infected after at most $2M$ additional time steps. Continuing, the entire component of weakly bad $M$-cells becomes fully infected in time at most $t=2Mk$. Hence, with high probability, the percolation time $T$ is at most
\[
t'+t \leq \frac{50M}{p} + \frac{6M}{cp} + \frac{6\log(n/M)}{cp} \leq C\sqrt{\frac{\log(n/K)}{p}}K + C\frac{\log n}{p}
\]
for some constant $C>0$.
\end{proof}

\section{Lower bound for small $p$}\label{se:lowersmall}

Recall that Theorem \ref{th:small} states that if $\liminf p\log n>\lambda$ and $p\to 0$ then $T=\Theta(M/p)$ with high probability, where $M$ was defined by
\[
M = \max\left\{ A\sqrt{p\log(n/K)}K \, , \, A\log(n/K) \right\}.
\]
In the previous section we proved the upper bound. Here we concentrate on the lower bound, and since we have already proved in Lemma \ref{le:lb} that $T=\Omega\big((\log n)/p\big)$ with high probability, we only have to prove that
\[
T \geq c\sqrt{\frac{\log(n/K)}{p}}K
\]
with high probability, for some constant $c>0$. Thus, in this section we shall always assume that $p$ is sufficiently small that $\sqrt{p\log(n/K)}K\geq\log(n/K)$, and hence that $M=A\sqrt{p\log(n/K)}K$.

At the basic level the idea behind our proof of the lower bound in the small $p$ regime is quite simple: we show that with high probability there exists a region of the grid in which the initial configuration $A$ is in some sense relatively sparse, and then that even if all the sites outside of this area are initially infected, the percolation time must still be quite large. A little more precisely, we shall find as large an area of the grid as possible not containing an internally spanned critical droplet (that is what we mean here by ``sparse''). Letting this area be $D$, we then generously take a new initial set $A'$ to consist of the closure of $D\cap A$ together with all sites outside of $D$, and observe that since $A\subset A'$, the percolation time of $A$ is certainly at least the percolation time of our new initial set $A'$.

How long should the set $A'$ take to percolate? The answer to that question depends on the shape of the droplet $D$, so the question we must answer first is: how should we choose the ratio of the sides of the droplet $D$ in order to maximize the expected percolation time of $A'$? There are two effects to balance. First, diagonal lines of infected sites are becoming infected from the corners of $D$ deterministically at rate $1$. Second, sites in $D$ are infected with density $p$, so we expect the sides of $D$ to become infected at rate $p$. After a moment's thought, one realizes this means that the optimal ratio of sides for $D$ should be $p:1$. The majority of this section of the paper deals with formalizing this heuristic: that the sides of $D$ should become infected at rate $p$.

Recall that $K=K(p)=\exp\big(\mu(p)/p\big)$, where $\mu(p)=\lambda+o(1)$. The function $M=A\sqrt{p\log(n/K)}K$ is designed so that the largest region $D$ of the grid that is likely not to contain an internally spanned critical droplet has area $M^2/p$. Combining this with our observation about the optimal ratio of the sides of this droplet, it follows that the droplet $D$ should have long side length $M/p$ and short side length $M$. We define an \emph{$M$-slab} to be any such droplet; thus, $D$ is an $M$-slab if $\lg(D)=M/p$ and $\sh(D)=M$.

Suppose that an $M$-slab is filled in time $t$ assuming that all sites outside the $M$-slab are initially infected, where $t$ is a large constant factor smaller than $M/p$. We ask what route the infection took from the edge of the $M$-slab to the centre. Suppose the route came via the bottom edge. Then we can say, deterministically, that there must be a sequence of internally spanned droplets such that together they do not leave an empty double row between the bottom edge of the $M$-slab and the centre, and such that the sum of the horizontal distances between consecutive internally spanned droplets is considerably smaller than one would expect. This says that part of our supposedly sparse $M$-slab is much more dense than even an average $M$-slab.

A sequence of droplets joining the boundary of an $M$-slab to the centre, such as the one described in the previous paragraph, is called a \emph{wave} (the definition is made precise in the next section). By counting the number of possible waves and estimating their probabilities, we show that the probability there exists a wave with small sum of horizontal distances between the droplets --- which is equivalent to the $M$-slab filling quickly --- is small. The details of the proof are long and technical, and the reader who is in a hurry may choose to omit them without losing the flow of the argument. (However, some of the definitions that occur alongside these arguments \emph{are} important, such as those of a wave and a slow $M$-slab.) The main part of the proof of the small $p$ lower bound theorem occurs in Section \ref{se:pflowersmall}.

\subsection{Waves and flood times of $M$-slabs}\label{se:waves}

The next definition is central to this part of the paper. It is the structure that we shall use to encode how an $M$-slab could percolate quickly.

\begin{definition}
A \emph{wave} is a sequence of droplets $(D_1,\dots,D_k)$, where $D_i=[(a_i,b_i),(c_i,d_i)]$ for each $i$, satisfying the following conditions:
\begin{enumerate}
\item the droplets are disjoint: $D_i\cap D_j=\emptyset$ if $i\neq j$;
\item the droplets are closed: $[D_1\cup\dots\cup D_k]=D_1\cup\dots\cup D_k$;
\item $b_i<b_{i+1}\leq d_i+2<d_{i+1}+2$ for $i=1,\dots,k-1$.
\end{enumerate}
\end{definition}

The \emph{height} of the wave, $h(W)$, is defined to be $d_k-b_1+1$. The three conditions of a wave imply that consecutive droplets do not overlap horizontally, so if $x\in D_i$ and $y\in D_{i+1}$ then $x\neq y$. Thus the quantity
\[
t_i = \min\{|a_{i+1}-c_i|,|a_i-c_{i+1}|\} = \max\{a_{i+1}-c_i,a_i-c_{i+1}\}
\]
is the horizontal distance between droplets $D_i$ and $D_{i+1}$. The \emph{time} of the wave, $t(W)$, is
\[
t(W) := \sum_{i=1}^{k-1} (t_i-1).
\]
The concept of the time of a wave is important. If the set of initially infected sites consists of the union of the row of sites immediately below $D_1$ (extending as far as necessary) and $D_1\cup\dots\cup D_{k-1}$, and if $D_k$ is a single site, then $t(W)$ is a lower bound for the time it takes $D_k$ to become infected.

A wave $W=(D_1,\dots,D_k)$ inside a droplet $D=[(a,b),(c,d)]$ is an \emph{up-wave} if $b_1=b$, and a \emph{down-wave} if $d_k=d$. Although the property of being an up- or down-wave depends on the parent droplet $D$, we shall rarely make reference to this.

The \emph{upper crest} of a wave $W=(D_1,\dots,D_k)$ in a droplet $D$ is the set
\[
\crest^+(W) = \{(x_1,x_2)\in D: b_k-1\leq x_2\leq d_k\} \setminus D_k,
\]
and similarly the \emph{lower crest} is the set
\[
\crest^-(W) = \{(x_1,x_2)\in D: b_1\leq x_2\leq d_1+1\} \setminus D_1.
\]
If $x=(x_1,x_2)$ is in the upper crest of $W$, then the \emph{upper $W$-time of $x$} is defined to be
\[
t^+(x,W) = t(W) + \min\{|x_1-c_k|,|a_k-x_1|\}
\]
if $x_1\notin [a_k,c_k]$, and $0$ otherwise, while if $x$ is in the lower crest of $W$, then the \emph{lower $W$-time of $x$} is defined to be
\[
t^-(x,W) = t(W) + \min\{|x_1-c_1|,|a_1-x_1|\}
\]
if $x_1\notin [a_1,c_1]$, and $0$ otherwise.

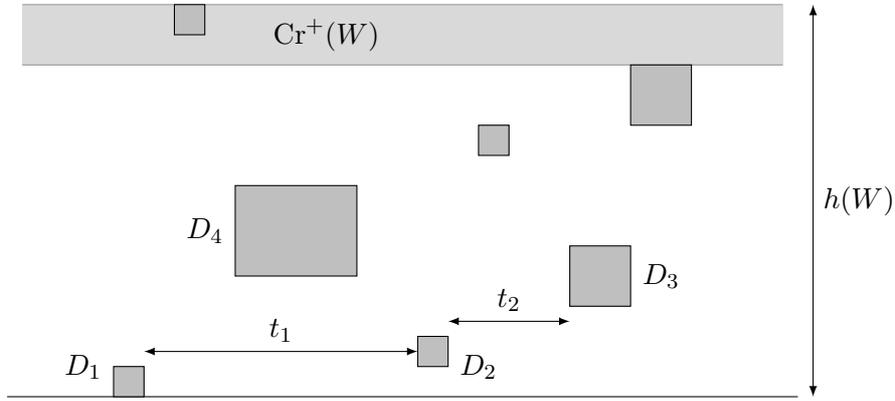
\begin{figure}[ht]
  \centering
  \begin{tikzpicture}[scale=0.4,>=latex]
    \tikzstyle{drop}=[fill=gray!50];
    \fill [gray!30] (0,11) rectangle (25,13);
    \draw [gray!80] (0,11) -- (25,11) (0,13) -- (25,13);
    \draw (-0.5,0) -- (25.5,0);
    \draw [drop] (3,0) rectangle (4,1);
    \draw [drop] (13,1) rectangle (14,2);
    \draw [drop] (18,3) rectangle (20,5);
    \draw [drop] (7,4) rectangle (11,7);
    \draw [drop] (15,8) rectangle (16,9);
    \draw [drop] (20,9) rectangle (22,11);
    \draw [drop] (5,12) rectangle (6,13);
    \draw [<->] (4,1.5) -- node [above] {$t_1$} (13,1.5);
    \draw [<->] (14,2.5) -- node [above] {$t_2$} (18,2.5);
    \draw [<->] (26,0) -- node [right] {$h(W)$} (26,13);
    \node at (2,1) {$D_1$};
    \node at (15,1) {$D_2$};
    \node at (21,4) {$D_3$};
    \node at (6,5.5) {$D_4$};
    \node at (10,12) {$\crest^+(W)$};
  \end{tikzpicture}
  \caption{An example of an up-wave $W$.}\label{fi:wave}
\end{figure}

Let $D=[(a,b),(c,d)]$ be a droplet and let
\[
D_0=[D\cap A] \cup \big([(a-1,b-1),(c+1,d+1)]\setminus D\big).
\]
Thus, $D_0$ is the union of the closure of $A$ restricted to $D$ and the horizontal and vertical lines adjacent to the edges of $D$.

\begin{definition}
For $t\geq 0$, the \emph{$t$-flood of $D$}, which we write as $[[D]]_t$, is defined to be the set $[D_0]_t\cap D$. The \emph{flood time} of $x\in D$ is the minimal $t$ such that $x\in[[D]]_t$ (which is well defined, because $D\subset[D_0]$). The flood time of $D$ itself is defined to be the maximum of the flood times of the sites belonging to $D$, or equivalently, it is the minimal $t$ such that $[[D]]_t=D$.
\end{definition}

It is easy to see that
\begin{equation}\label{eq:tflood}
A_t \subset D^c \cup [[D]]_t.
\end{equation}
The reason for this is that, firstly, $A \subset D^c \cup [[D]]_0$, because $D^c\cap A\subset D^c$ and $D\cap A\subset [[D]]_0$, and then \eqref{eq:tflood} follows because $D^c \cup [[D]]_t = [D^c\cup [[D]]_0]_t$. This simple observation means that we can bound from below the percolation time of the grid by the flood time of any given droplet.

Given a site $x=(x_1,x_2)\in D$, the \emph{width of $x$}, $w(x)$, is the minimum horizontal distance from $x$ to the exterior of $D$; specifically, $w(x) = \min\{c-x_1,x_1-a\}+1$. Similarly, the \emph{height of $x$}, $h(x)$, is the minimum vertical distance from $x$ to the exterior of $D$; thus, $h(x) = \min\{d-x_2,x_2-b\}+1$. The \emph{down-wake of $x$} is the set
\[
\{y=(y_1,y_2)\in D: |y_1-x_1|+y_2 \leq x_2\}.
\]
One may think of the down-wake of $x$ as the set of sites in the $45^\circ$ pyramid below $x$, with $x$ at the apex. The \emph{up-}, \emph{left-} and \emph{right-wake of $x$} are similarly defined. An easy induction shows that if $t$ is the flood time of $x$ and $t$ is strictly positive, then one of the four wakes of $x$ is wholly contained in the $t$-flood of $D$.

\begin{figure}[ht]
  \centering
  \begin{tikzpicture}[scale=0.4,>=latex]
    \tikzstyle{drop}=[fill=gray!50];
    \tikzstyle{flood}=[fill=gray!30];
    \path [flood] (0,0) -- (9,0) -- (9,1) -- (7,1) -- (7,2) -- (4,2) -- (4,3) -- (3,3) -- (3,4) -- (2,4) -- (2,5) -- (1,5) -- (1,6) -- (0,6) -- (0,0);
    \path [flood] (0,15) -- (6,15) -- (6,14) -- (5,14) -- (5,13) -- (4,13) -- (4,12) -- (3,12) -- (3,11) -- (2,11) -- (2,10) -- (1,10) -- (1,9) -- (0,9) -- (0,15);
    \path [flood] (25,0) -- (19,0) -- (19,1) -- (20,1) -- (20,2) -- (21,2) -- (21,3) -- (22,3) -- (22,4) -- (23,4) -- (23,5) -- (24,5) -- (24,6) -- (25,6) -- (25,0);
    \path [flood] (25,15) -- (19,15) -- (19,14) -- (20,14) -- (20,13) -- (21,13) -- (21,12) -- (22,12) -- (22,11) -- (23,11) -- (23,10) -- (24,10) -- (24,9) -- (25,9) -- (25,15);
    \path [flood] (10,0) rectangle (16,1);
    \path [flood] (7,15) -- (7,14) -- (8,14) -- (8,13) -- (17,13) -- (17,14) -- (18,14) -- (18,15) -- (7,15);
    \draw (0,0) rectangle (25,15);
    \draw (0,0) -- (9,0) -- (9,1) -- (7,1) -- (7,2) -- (4,2) -- (4,3) -- (3,3) -- (3,4) -- (2,4) -- (2,5) -- (1,5) -- (1,6) -- (0,6) -- (0,0);
    \draw (0,15) -- (6,15) -- (6,14) -- (5,14) -- (5,13) -- (4,13) -- (4,12) -- (3,12) -- (3,11) -- (2,11) -- (2,10) -- (1,10) -- (1,9) -- (0,9) -- (0,15);
    \draw (25,0) -- (19,0) -- (19,1) -- (20,1) -- (20,2) -- (21,2) -- (21,3) -- (22,3) -- (22,4) -- (23,4) -- (23,5) -- (24,5) -- (24,6) -- (25,6) -- (25,0);
    \draw (25,15) -- (19,15) -- (19,14) -- (20,14) -- (20,13) -- (21,13) -- (21,12) -- (22,12) -- (22,11) -- (23,11) -- (23,10) -- (24,10) -- (24,9) -- (25,9) -- (25,15);
    \draw (10,0) rectangle (16,1);
    \draw (7,15) -- (7,14) -- (8,14) -- (8,13) -- (17,13) -- (17,14) -- (18,14) -- (18,15) -- (7,15);
    \draw [drop] (2,1) rectangle (3,2);
    \draw [drop] (9,0) rectangle (10,1);
    \draw [drop] (18,3) rectangle (19,4);
    \draw [drop] (11,5) rectangle (12,6);
    \draw [drop] (4,8) rectangle (7,10);
    \draw [drop] (12,13) rectangle (13,14);
    \draw [drop] (20,11) rectangle (22,13);
    \draw [drop] (22,4) rectangle (23,5);
    \draw [drop] (16,7) rectangle (17,9);

  \end{tikzpicture}
  \caption{The set $[[D]]_0=[D\cap A]$ is shown by the dark rectangles. The $6$-flood $[[D]]_6$ is the union of the light and dark areas.}\label{fi:flood}
\end{figure}
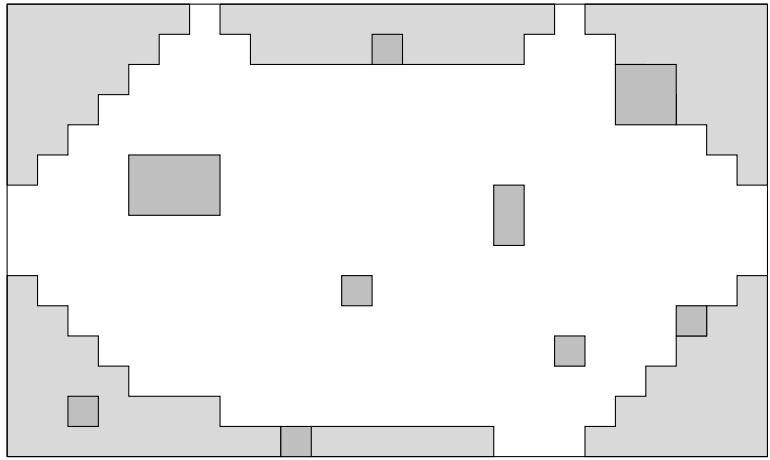

\begin{lemma}\label{le:wave}
Let $D$ be a droplet. Let $x$ be a site in $D$ with strictly positive flood time $t$, and suppose that $t<w(x)$. Then $[[D]]_0$ contains an up- or down-wave with height at least $h(x)$ and time at most $t$.
\end{lemma}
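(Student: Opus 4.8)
The plan is to induct on the flood time $t$. The base case $t=1$ is immediate: if $x$ has flood time $1$, then $x\notin[D\cap A]$ but $x$ becomes infected by the first step of the flood process, which (since $x\notin[[D]]_0$) means two of its neighbours lie in $[[D]]_0$, and these two neighbours must be consecutive around $x$, forcing one of the four \emph{single-site} wakes of $x$ to lie inside $[[D]]_1$ (in fact the relevant wake of $x$ is just $\{x\}$ together with the boundary line of $D$ in the appropriate direction, but since $t<w(x)$ the horizontal excursion required is small). A degenerate single-droplet wave consisting of one site below the appropriate corner then does the job, with time $0$ and height at least $h(x)$.

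For the inductive step, suppose $x$ has flood time $t\geq 2$ and $t<w(x)$. Use the observation recorded just before the lemma statement: one of the four wakes of $x$ is wholly contained in $[[D]]_t$; by symmetry (horizontal reflection swaps left/right wakes, vertical reflection swaps up/down wakes) we may assume it is the down-wake, so I am building an up-wave. The down-wake of $x$ meets the bottom boundary region of $D$ along an interval of length $2t+1$ centred under $x$; the hypothesis $t<w(x)$ guarantees this interval stays within $D$ so that the relevant sites genuinely had to be infected from inside $D$ (this is exactly where $t<w(x)$ is used). Now look at $[[D]]_0=[D\cap A]$: it is a disjoint union of closed droplets (closed under the two-neighbour rule, by the rectangles process). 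The flood process started from $D_0$ is the ordinary bootstrap process with the external frame added, so the infection reaching $x$ by time $t$ must have propagated up from the bottom frame through a succession of these closed droplets of $[[D]]_0$, each pair being close enough vertically that infection can bridge the gap between them, and the horizontal gaps costing time. Select a minimal such chain $D_1,\dots,D_k$ of droplets from $[[D]]_0$ that ``carries'' the infection from the bottom frame up to (a droplet touching) $x$; discarding redundant droplets, one checks this chain satisfies the three wave axioms: disjointness and closedness are inherited from $[[D]]_0$, and the vertical interleaving condition $b_i<b_{i+1}\leq d_i+2<d_{i+1}+2$ is forced because if it failed we could either delete a droplet (contradicting minimality) or the infection could not have bridged the vertical gap within the flood process.

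It remains to verify the two quantitative claims: the height of the wave is at least $h(x)$, and its time is at most $t$. For the height, the chain starts at the bottom frame ($b_1$ can be taken at or below the bottom row of $D$, or one incorporates the frame as a degenerate $D_1$) and ends at a droplet meeting $x$, so $d_k\geq x_2$ and $b_1\leq\min\{x_2,\text{bottom}\}$, giving $d_k-b_1+1\geq h(x)$ after comparing with the distance from $x$ to the top of $D$ as well (here one uses that $x$ is reached from below, so its height relative to $D$ is controlled by how far up the wave climbed). For the time bound, this is the heart of the argument and the step I expect to be the main obstacle: I must show $\sum_{i=1}^{k-1}(t_i-1)\leq t$, i.e.\ that the total horizontal ``detour'' the infection makes between successive droplets is at most the flood time of $x$. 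The idea is that each horizontal gap of width $t_i$ between $D_i$ and $D_{i+1}$ requires at least $t_i-1$ time steps for infection to cross (two already-infected droplets a horizontal distance $t_i$ apart, stacked with a vertical overlap, fill the gap between them no faster than a ``staircase'' growing one column per step from each side — but with only a bounded vertical overlap available the rate is essentially one column per step), and these delays are incurred in sequence along the chain, so they sum to at most the time $t$ by which $x$ is infected. Making ``in sequence'' precise — i.e.\ that the flood time of the droplet $D_{i+1}$ exceeds that of $D_i$ by at least $t_i-1$ — is the delicate point; I would prove it by tracking, for each $i$, the first time a site of $D_{i+1}$ is infected in the flood process and showing by the wake/staircase argument that it is at least $(t_i-1)$ plus the first time a site of $D_i$ is infected, then telescoping. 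Once this monotone accumulation of delays is established, summing from $i=1$ (flood time $0$, since $D_1$ touches the frame) up to the droplet containing a neighbour of $x$ yields $t(W)\leq t$, completing the induction.
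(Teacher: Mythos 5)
Your approach is genuinely different from the paper's and, as you anticipate, the time bound is where it breaks down. The paper does not attempt to read off a single wave globally from the wake observation. Instead it runs a local induction on the flood time $t$ of $x$ with a \emph{strengthened} hypothesis: the wave $W$ is required to have $x$ in its upper (or lower) crest with crest time $t^{\pm}(x,W)\le t$, not merely to have height $\ge h(x)$ and time $\le t$. Passing from a neighbour $y$ (flood time $t-1$) to $x$ then either reuses the same wave (if $h(W)\ge h(x)$) or appends/modifies exactly one maximal droplet $D'\subset[[D]]_0$ adjacent to $x$, and the inequality $t(W')\le t$ follows from a two-line computation. The strengthened hypothesis is the whole trick: without it there is no clean way to relate $t(W)$ to the flood time, which is precisely the wall you hit.

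Concretely, the gap is this. The quantity you propose to telescope is ill-defined: every site of $D_{i+1}$ already lies in $[[D]]_0$, so ``the first time a site of $D_{i+1}$ is infected in the flood process'' is $0$, and cannot exceed the same quantity for $D_i$ by $t_i-1$. Even a repaired version (say, the first time the gap between $D_i$ and $D_{i+1}$ is bridged) does not follow from a staircase estimate, because nothing in the setup forces the flood to cross that gap from $D_i$: it may arrive from the side frame, from the top, or through droplets of $[[D]]_0$ outside your chain. Relatedly, ``select a minimal chain carrying the infection from the bottom frame to $x$'' is not a well-posed choice from the maximal droplets of $[[D]]_0$, and the interleaving condition $b_i<b_{i+1}\le d_i+2<d_{i+1}+2$ is asserted rather than derived. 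Finally, the height bound also has a hole: the last droplet in your chain need not reach level $x_2$, since the flood can cover the final stretch to $x$ by a diagonal staircase through sites not in $[[D]]_0$ at all; the paper handles exactly this slack by tracking the crest and the horizontal offset $\min\{|x_1-c_k|,|a_k-x_1|\}$ inside $t^{\pm}(x,W)$.
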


\begin{proof}
The proof is by induction on the flood time $t$. We strengthen the claim slightly by proving, under the same conditions, that $[[D]]_0$ contains a wave $W$ with height at least $h(x)$, such that \emph{either} $W$ is an up-wave, $x$ is in the upper crest of $W$, and $t^+(x,W)\leq t$, \emph{or} $W$ is a down-wave, $x$ is in the lower crest of $W$, and $t^-(x,W)\leq t$.

If $x$ has flood time $1$ then $x$ lies on the top or bottom edge of $D$, is not at one of the corners (since $w(x)>1$), and is adjacent to (exactly one) site in $[[D]]_0$, which must also lie on the top or bottom edge of $D$, so the claim is true.

Suppose the claim is true for all sites with flood time $t-1$. Since $x$ has flood time $t$, at least one of its four neighbours must have flood time $t-1$. Such a neighbour $y$ has width $w(y)\geq w(x)-1$, so the induction hypothesis applies and without loss of generality there is an up-wave $W$ with height at least $h(y)$ such that $y$ is in the upper crest of $W$ and the upper $W$-time of $y$ is at most $t-1$. Observe that if $h(W)$ is at least $h(x)$ then the same wave $W$ satisfies the conditions of the claim for $x$, since then $x$ is in the upper crest of $W$ and $t^+(x,W)\leq t$. So we may assume that $h(x)>h(W)$. This means that $y$ cannot be equal to $x\pm e_1$, since that would imply $h(x)=h(y)\leq h(W)$.

If both $x+e_2$ and $x-e_2$ have flood time at most $t-1$, and their associated waves are $W$ and $W'$ respectively, then $W$ must be a down-wave and $W'$ an up-wave, and one of $h(W)$ or $h(W')$ must be greater than $h(x)$, which is again a contradiction.

We are left with the case $y=x-e_2$ (if $y=x+e_2$ then $W$ is a down-wave and the argument is similar), $h(y)=h(W)$, and each of $x+e_1$, $x-e_1$ and $x+e_2$ either belongs to $[[D]]_0$ or has flood time at least $t$. In fact at least one of $x+e_1$, $x-e_1$ and $x+e_2$ must belong to $[[D]]_0$, because otherwise at most one neighbour of $x$ would be in $[[D]]_{t-1}$, so $x$ would not be infected at time $t$.

\emph{Case 1.} Suppose $x-e_1\in[[D]]_0$; the case $x+e_1\in[[D]]_0$ is treated in the same way. Let $D'=[(a,b),(c,d)]$ be the maximal droplet in $[[D]]_0$ that contains $x-e_i$. If $D'$ is one of the droplets in the wave $W$ then it must be that $h(W)\geq h(x)$ and $t(x,W)\leq t$, so $W$ satisfies the conditions for $x$. So we may assume instead that $D'$ is not one of the droplets in $W$. We shall show there exists $j\leq k$ such that $W'=(D_1,\dots,D_j,D')$ is an up-wave, $h(W')\geq h(x)$, and $t^+(x,W')\leq t$.

First we need to establish that $W'$ satisfies the three conditions of a wave. The first two (that the droplets are disjoint and form a closed set) are trivially satisfied for all $j$. Since $W$ is wave, the third condition for a wave is satisfied for all $1\leq i\leq j-1$, so it remains to show that $b_j<b\leq d_j+2<d+2$. The third of these inequalities is satisfied by all $j$ because $d_j\leq h(W)<h(x)\leq d$. The second inequality is satisified when $j=k$, because $b\leq h(x)=h(y)+1=h(W)+1=d_k+1$. Choose $j$ to be minimal such that the second inequality is satisfied. If $j=1$ then we take $W'=(D')$ if $b=1$ and $W'=(D_1,D')$ if $b>1$. Otherwise, if $j>1$, then $b_j\leq d_{j-1}+2$ because $W$ is a wave, and we have $d_{j-1}+2<b$ by the minimality of $j$, so $b_j<b$, and therefore $j$ satisfies the first inequality. This proves that $W'$ is a wave, and hence, since $W$ is an up-wave, that $W'$ is also an up-wave. The inequality $h(W')\geq h(x)$ follows immediately because $x\in D'$.

It remains to show that $t^+(x,W')\leq t$. For this, it is enough to have $t(W')\leq t-1$, because $x$ is horizontally adjacent to $D'$. We have
\begin{align}\label{eq:tplusineq}
t(W') &\leq t(W) + \min\{|a-c_j|,|a_j-c|\} - 1 \notag \\
&\leq t(W) + \min\{|y_1-c_j|,|a_j-y_1|\} \notag \\
&\leq t^+(y,W),
\end{align}
which is at most $t-1$, as required. This completes the case in which $x\pm e_1\in[[D]]_0$.

\emph{Case 2.} Now suppose $x+e_2\in[[D]]_0$. As before, let $D'=[(a,b),(c,d)]$ be the maximal droplet in $[[D]]_0$ that contains $x-e_i$. Observe that $D'$ cannot be one of the droplets in $D$, because $b=h(x)+1=h(y)+2=h(W)+2$. We shall show that $W'=(D_1,\dots,D_k,D')$ is an up-wave, $h(W')\geq h(x)$, and $t^+(x,W')\leq t$. That $W'$ is a wave is clear, because the first two properties of a wave (that the droplets are disjoint and form a closed set) are again trivially satisfied, and the third condition, that $b_k<b\leq d_k+2<d+2$, is also satisfied, because we have just observed that $b=d_k+2$. Given that $W'$ is a wave, it is automatically an up-wave, and the inequality $h(W')\geq h(x)$ is also clear, because $x\in D'$. Our final task, then, is to show that $t^+(x,W')\leq t$. Since $x_1\in[a,c]$, the condition is equivalent to $t(W')\leq t$. But now the calculation is the same as in \eqref{eq:tplusineq}, which completes the proof of this case, and also the proof of the lemma.
\end{proof}

\subsection{Subcriticality and restrictions of waves}\label{se:restrictions}

It turns out to be too difficult to do calculations with waves considered in the level of generality we have so far permitted, because there are too many choices for the dimensions of the $D_i$. The purpose of this short section is to introduce new, related structures, which allow only three types of droplet. The reason we are able to restrict the number of types of droplet so strongly is because we only ever consider waves in subcritical regions of the grid, so there are no large internally spanned droplets. The advantage of introducing the simplified strctures is that calculations involving these structures are much simpler. The disadvantage is that the new structures are not necessarily themselves waves, because the droplets may overlap. However, this causes only very minor complications.

First, let us say that an $M$-slab $D$ is \emph{subcritical} if the largest internally spanned droplet in $D$ has semi-perimeter at most $\gamma$. All of the $M$-slabs we shall consider will be subcritical. Similarly we say that a wave $W=(D_1,\dots,D_k)$ is subcritical if $\phi(D_i)\leq\gamma$ for $1\leq i\leq k$.

Let $\sigma$ be a fixed positive integer to be specified later. The \emph{$(1,\sigma,\gamma)$-restriction} of a subcritical wave $W=(D_1,\dots,D_k)$ is the sequence of droplets $(D_1',\dots,D_{k'}')$ obtained by applying the following algorithm to $W$.
\begin{enumerate}
\item Let $i$ be minimal such that $\sigma+2\leq\phi(D_i)\leq\gamma$, or if no such $i$ exists then move on to the next step. Replace $D_i=[(a,b),(c,d)]$ by the $\gamma$-cell $[(a,b),(a+\gamma-1,b+\gamma-1)]$. Remove from the sequence all droplets $D_j=[(a',b'),(c',d')]$ which are such that $b'\geq b$ and $d'\leq d$. Repeat this step until all droplets are either $\gamma$-cells or have semi-perimeter at most $\sigma+1$.
\item Let $i$ be minimal such that $3\leq\phi(D_i)\leq\sigma+1$, or if no such $i$ exists then stop. Replace $D_i=[(a,b),(c,d)]$ by the $\sigma$-cell $[(a,b),(a+\sigma-1,b+\sigma-1)]$. Remove from the sequence all droplets $D_j=[(a',b'),(c',d')]$ which are such that $b'\geq b$ and $d'\leq d$. Repeat this step until all droplets are either $\gamma$-cells, $\sigma$-cells, or consist of a single site, and then stop.
\end{enumerate}

Note that the definition implies that $D_i\subset D_i'$ for all $i$. As mentioned above, the $(1,\sigma,\gamma)$-restriction of a wave is not necessarily a wave, because $D_i'\cap D_{i+1}'$ may be non-empty. However, the following lemma, which is merely an observation, is the only disjointness property we need.

\begin{lemma}\label{le:isrestricted}
Let $W=(D_1,\dots,D_k)$ be a subcritical wave and let $W'=(D_1',\dots,D_{k'}')$ be the $(1,\sigma,\gamma)$-restriction of $W$. Suppose that the $D_i$ are all internally spanned. Then for each $1\leq j\leq k'$, the following holds. If $D_j'$ is a single site then it is internally spanned. If $D_j'$ is a $\sigma$-cell then it contains a droplet of semi-perimeter between $3$ and $\sigma+1$ which is internally spanned. If $D_j'$ is a $\gamma$-cell then it contains a droplet of semi-perimeter between $\sigma+2$ and $\gamma$ which is internally spanned. Furthermore, the internally spanned droplets associated with the $D_j'$ are disjoint. \qed
\end{lemma}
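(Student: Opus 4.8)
The plan is to treat this as pure bookkeeping: follow the two steps of the $(1,\sigma,\gamma)$-restriction algorithm and record, for each droplet $D_j'$ of $W'$, the original droplet of $W$ that is responsible for it, which I will call its \emph{witness}.

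First I would classify the droplets of $W'$. By inspection of the algorithm, every $D_j'$ is of one of three kinds: a $\gamma$-cell that was substituted in the first step for some $D_i$ with $\sigma+2\leq\phi(D_i)\leq\gamma$; a $\sigma$-cell that was substituted in the second step for some $D_i$ with $3\leq\phi(D_i)\leq\sigma+1$; or a single site, which must be one of the original $D_i$ with $\phi(D_i)=2$, since a one-point droplet is selected in neither step (and since, $W$ being subcritical, every original droplet has $\phi(D_i)\leq\gamma$, so the first step terminates with $\gamma$-cells and droplets of semi-perimeter at most $\sigma+1$, and the second terminates with $\gamma$-cells, $\sigma$-cells, and single sites). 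In the last case there is nothing to prove: $D_j'=D_i$ and the $D_i$ are internally spanned by hypothesis, so we take $D_i$ itself as the associated internally spanned droplet. In the first two cases the witness $D_i$ is internally spanned by hypothesis and has semi-perimeter in exactly the asserted range; the only thing to check is the containment $D_i\subseteq D_j'$. This is immediate: the substituting cell shares the bottom-left corner $(a_i,b_i)$ of $D_i$, and the bound $\phi(D_i)\leq\gamma$ (respectively $\phi(D_i)\leq\sigma+1$) forces both side-lengths of $D_i$ to be at most $\gamma-1$ (respectively at most $\sigma$), so $D_i$ fits inside $[(a_i,b_i),(a_i+\gamma-1,b_i+\gamma-1)]$ (respectively inside the corresponding $\sigma$-cell). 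Taking the associated internally spanned droplet of $D_j'$ to be its witness $D_i$ then gives all three bulleted claims.

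For the disjointness assertion I would note that the map sending each $D_j'$ to its witness is injective into $\{D_1,\dots,D_k\}$: once a droplet is selected in either step it is immediately replaced, so it cannot be selected again; a droplet deleted as ``vertically nested'' contributes no witness at all; and a single-point droplet is never selected, hence can only be the witness of itself. Since the $D_1,\dots,D_k$ are pairwise disjoint by condition (i) in the definition of a wave, the distinct witnesses are pairwise disjoint too, which is precisely the last statement of the lemma.

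There is no substantial obstacle here; the only points needing a moment's care are the elementary side-length computation yielding $D_i\subseteq D_j'$, and checking that the replace-and-delete structure of the algorithm prevents any original droplet from serving as the witness of two different $D_j'$ — both of which are immediate.
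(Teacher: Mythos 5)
Your proof is correct and fills in exactly the bookkeeping the paper leaves implicit (the lemma is stated with \qed and no proof, being regarded as an observation): each $D_j'$ inherits an internally spanned witness $D_i$ from $W$, the side-length computation gives $D_i\subseteq D_j'$, and injectivity of the witness map plus the pairwise disjointness of the $D_i$ in the original wave give the final claim.
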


Let $W'=(D_1',\dots,D_{k'}')$ be the $(1,\sigma,\gamma)$ restriction of a wave $W$, and let $D_i'=[(a_i',b_i'),(c_i',d_i')]$ for $1\leq i\leq k'$. The \emph{height} of $W'$, like that of a wave, is defined to be $h(W')=d_{k'}'-b_1'+1$. The horizontal displacement between droplets $D_i'$ and $D_{i+1}'$ is
\[
t_i' = \max\{a_{i+1}'-c_i',a_i'-c_{i+1}',1\}.
\]
The time of $W'$ is then defined to be
\[
t(W') = \sum_{i=1}^{k'} (t_i'-1).
\]

\begin{lemma}\label{le:htrestricted}
Let $W'$ be the $(1,\sigma,\gamma)$-restriction of a subcritical wave $W$. Then $h(W')\geq h(W)$ and $t(W')\leq t(W)$. \qed
\end{lemma}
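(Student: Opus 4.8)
The plan is to extract from the restriction algorithm two structural facts: (a) a droplet is only ever modified by being replaced with an enlarged square having the same bottom-left corner; and (b) if the wave is subcritical, the deletion clause of the algorithm never actually removes anything, so $k'=k$ and for each $i$ the droplet $D_i'$ is obtained from $D_i=[(a_i,b_i),(c_i,d_i)]$ by possibly enlarging it upwards and to the right from the corner $(a_i,b_i)$; that is, $a_i'=a_i$, $b_i'=b_i$, $c_i'\geq c_i$ and $d_i'\geq d_i$. Granting (a) and (b), both inequalities are immediate. From $b_1'=b_1$ and $d_{k'}'\geq d_k$ we get $h(W')=d_{k'}'-b_1'+1\geq d_k-b_1+1=h(W)$. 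And for each $i$,
\[
t_i' = \max\{a_{i+1}-c_i',\, a_i-c_{i+1}',\, 1\} \;\leq\; \max\{a_{i+1}-c_i,\, a_i-c_{i+1},\, 1\} \;=\; t_i ,
\]
the last equality holding because consecutive droplets of a wave do not overlap horizontally, so the maximum of the first two terms is already at least $1$; summing over $i$ gives $t(W')\leq t(W)$.

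Fact (a) is read off directly. In step~(i) a droplet $D_i$ with $\phi(D_i)\leq\gamma$ is replaced by a $\gamma$-cell sharing its bottom-left corner; both side-lengths of $D_i$ are then at most $\gamma-1$, so this $\gamma$-cell contains $D_i$. In step~(ii) a droplet with $\phi(D_i)\leq\sigma+1$ is replaced by a $\sigma$-cell sharing its bottom-left corner; both side-lengths of $D_i$ are then at most $\sigma$, so this $\sigma$-cell contains $D_i$. In either case the bottom-left corner is fixed and neither the top nor the right coordinate of the droplet decreases.

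For fact (b) I would induct on the iterations of the algorithm, maintaining the invariant that the bottom coordinates of the droplets currently in the sequence are strictly increasing and the top coordinates of those not yet modified are strictly increasing. Both hold initially by condition~(iii) of a wave, and by fact (a) a modification keeps every bottom coordinate and only raises a single top coordinate, so the invariant persists. A droplet selected for processing is always one of the original droplets, since a $\gamma$-cell, a $\sigma$-cell or a single site never has semi-perimeter in the range required for selection. When $D_i=[(a,b),(c,d)]$ is processed, the deletion clause removes the other droplets whose vertical span lies inside $[b,d]$. No original droplet $D_j$ with $j\neq i$ qualifies, since strict monotonicity of the bottoms forces $j\geq i$ while strict monotonicity of the original tops forces $j\leq i$. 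No square produced at an earlier iteration qualifies either: such a square has height at least $\sigma$, whereas $d-b+1$ is at most $\gamma-1$ in step~(i) and at most $\sigma$ in step~(ii), so any such square with bottom coordinate in $(b,d]$ has top coordinate strictly above $d$. Thus the deletion clause is vacuous, $k'=k$, and (b) follows.

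The one genuinely technical point is (b): verifying that no droplet is ever absorbed into the vertical band of the droplet being processed. As sketched, this comes down to tracking the monotonicity of the bottom and top coordinates through the two phases of the algorithm. Should one wish to avoid it entirely, the two conclusions can be obtained from weaker observations: the first droplet always has the strictly smallest bottom coordinate and so is never deleted, giving $b_1'=b_1$; after any deletion the new last droplet is either the square just inserted or an original survivor, and in either case its top coordinate is at least that of the deleted last droplet, giving $d_{k'}'\geq d_k$; and for the time bound one needs only that enlargements and deletions never increase a horizontal gap between consecutive surviving droplets.
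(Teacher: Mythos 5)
Your proof is correct, and it carefully fills in a point that the paper leaves implicit. The paper states this lemma with a bare \qed, the only supporting remark being the aside (just before Lemma~\ref{le:isrestricted}) that ``the definition implies that $D_i\subset D_i'$ for all $i$''; this aside already presupposes your fact~(b), namely that the deletion clause of the restriction algorithm is vacuous and hence $k'=k$, since otherwise the indexing of $D_i'$ against $D_i$ would not even make sense. You are right to flag~(b) as the only genuinely technical step, and your verification of it is sound: bottoms are preserved and pairwise distinct, modifications only enlarge droplets upward and to the right, and the height of any square created so far exceeds $d-b$ for the droplet currently being processed, so nothing ever fits inside the vertical band $[b,d]$. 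Granting~(a) and~(b), the two inequalities follow exactly as you compute, the key point for the time bound being that $a_i'=a_i$ and $c_i'\geq c_i$ force each $t_i'\leq t_i$ and that $\max\{a_{i+1}-c_i,a_i-c_{i+1}\}\geq 1$ already for a genuine wave. Your closing remark that one can substitute weaker observations (the first droplet survives, the last top only grows, gaps only shrink) gives a slightly more robust route to the same conclusion. Net: you proved something the paper asserts without proof, and you did so by correctly isolating and settling the one hidden subtlety.
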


\subsection{Slow percolation of subcritical $M$-slabs}\label{se:slowperc}

This is the part of the proof of the lower bound in the small $p$ regime where the main calculations occur. We show that the probability there exists an up-wave with height $h$ and time $t$ inside a subcritical $M$-slab is small provided $pt/h$ is at most a small constant. This corresponds to the intuition that infection should spread in towards the centre of a subcritical $M$-slab at rate $\Theta(p)$.

The \emph{anchor} of a wave or restricted wave $W=(D_1,\dots,D_k)$ is the ordered pair $(a,b)$, where $D_1=[(a,b),(c,d)]$ for some $c,d$. A wave is said to be \emph{anchored} if its anchor is fixed.

Let $V(h,t)$ be the event that there exists a subcritical wave $W=(D_1,\dots,D_k)$ with anchor at the origin such that $W$ has height exactly $h$ and time exactly $t$. Let $V_\Gamma(h,t)$ denote the event that $V(h,t)$ occurs and that the number of $\gamma$-cells in the $(1,\sigma,\gamma)$-restriction of $W$ is at least $hp/\gamma$, and let $V_{\Gamma^c}(h,t) = V(h,t) \cap V_\Gamma(h,t)^c$. The calculation to show that $P_p\big(V(h,t)\big)$ is small provided $pt/h$ is small is slightly different according to whether or not the wave $W$ contains a large number of $\gamma$-cells, so we separate the calculation into two parts.

\begin{lemma}\label{le:fewcrit}
Let $pt/h$ be sufficiently small and let $\sigma$ be sufficiently large. Then
\[
\P_p\big(V_{\Gamma^c}(h,t)\big) \leq h^3 e^{-h/\gamma}.
\]
\end{lemma}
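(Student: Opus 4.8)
The plan is to union-bound over the $(1,\sigma,\gamma)$-restrictions of the witnessing waves and, via Lemma~\ref{le:isrestricted}, to reduce everything to a collection of pairwise disjoint internally spanned droplets at the three scales $1$, $\sigma$ and $\gamma$. Suppose $V_{\Gamma^c}(h,t)$ holds: then there is a subcritical wave $W$ anchored at the origin with $h(W)=h$ and $t(W)=t$ whose $(1,\sigma,\gamma)$-restriction $W'=(D_1',\dots,D_{k'}')$ has fewer than $hp/\gamma$ cells of the largest type; write $n_1,n_\sigma,n_\gamma$ for the numbers of single sites, $\sigma$-cells and $\gamma$-cells among the $D_j'$. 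By Lemma~\ref{le:isrestricted} each $D_j'$ contains an internally spanned droplet $E_j$, of semi-perimeter $2$, of semi-perimeter in $[3,\sigma+1]$, or of semi-perimeter in $[\sigma+2,\gamma]$ according to the type of $D_j'$, and the $E_j$ are pairwise disjoint, so the events $I(E_j)$ are independent. Hence
\[
\P_p\big(V_{\Gamma^c}(h,t)\big)\;\le\;\sum_{W'}\prod_{j=1}^{k'}\bigg(\sum_{E\subset D_j'}\P_p\big(I(E)\big)\bigg),
\]
the outer sum being over stacked sequences $W'$ of droplets of the three types, anchored at the origin, with total height at least $h$ and total horizontal displacement at most $t$ (these last two transferred from $W$ by Lemma~\ref{le:htrestricted}), and with fewer than $hp/\gamma$ cells of the largest type; the inner sum runs over droplets $E$ of the prescribed type inside $D_j'$. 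Dropping the disjointness of the tuple $(E_j)$ inside the sum only weakens the inequality.

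Next I would estimate the inner sums scale by scale, using Lemma~\ref{le:phi} for the two smaller scales and Lemma~\ref{le:AL} for the largest. A single site contributes $p$. For a $\sigma$-cell, a droplet $E$ of semi-perimeter $s\in[3,\sigma+1]$ needs $|A\cap E|\ge\lceil s/2\rceil\ge 2$, and summing crude position/shape counts against $p^{\lceil s/2\rceil}$ gives $\sum_E\P_p(I(E))\le p_\sigma$ with $p_\sigma=C\sigma^{O(1)}p^2$, dominated by the $s=3$ term. For a $\gamma$-cell, if $\phi(E)\ge\sigma+2$ then $\lg(E)\ge(\sigma+2)/2$, so by Lemma~\ref{le:AL} there is an internally spanned $E'\subset E$ with $\lg(E')$ of order $\sigma$; such an $E'$ has area $O(\sigma^2)$ but needs at least $c\sigma$ infected sites, so $\P_p(I(E'))\le(C\sigma p)^{c\sigma}$, and union-bounding over the $\gamma^{O(1)}$ choices of $E$ and $E'$ gives $\sum_E\P_p(I(E))\le P_\gamma$ with $P_\gamma\le\gamma^{O(1)}(C\sigma p)^{c\sigma}$, an arbitrarily large power of $p$ once $\sigma$ is large. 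The other ingredient is structural: the droplets of a restricted wave stack vertically with only bounded gaps, so $h\le h(W')\le 3(n_1+\sigma n_\sigma+\gamma n_\gamma)$, and combined with $\gamma n_\gamma< hp$ this forces $n_1+\sigma n_\sigma>h/6$, hence $k'\ge n_1+n_\sigma>h/(6\sigma)$. This is the heart of the matter: in $V_{\Gamma^c}$ the height is genuinely carried by small droplets, each of which demands its own disjoint infected sites.

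For the enumeration, the first droplet is pinned at the origin, the type sequence costs $\binom{k'}{n_1,n_\sigma,n_\gamma}\le 3^{k'}$, the vertical increments $b_{i+1}'-b_i'\in[1,h(D_i')+1]$ and the horizontal placement of each droplet relative to its predecessor together cost at most $2^{O(n_1)}(\sigma+1)^{O(n_\sigma)}(\gamma+1)^{O(n_\gamma)}$, and the displacements $(t_i')$, constrained by $\sum_i(t_i'-1)\le t$, cost $\binom{t+k'}{k'}$. Multiplying by $p^{n_1}p_\sigma^{n_\sigma}P_\gamma^{n_\gamma}$, the powers of $\sigma$ and $\gamma$ are absorbed once $\sigma$ is large and $p$ small, and I would finish by splitting on the size of $t$. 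If $t\le k'$, then $\binom{t+k'}{k'}\le 4^{k'}$ is absorbed, leaving a geometric sum dominated by $p^{\,n_1/2+3n_\sigma/2+n_\gamma}$, which over the admissible triples (where $n_1/2+3n_\sigma/2+n_\gamma>h/(12\sigma)$) sums to at most $h^3p^{h/(12\sigma)}$. If $t>k'$, then $\binom{t+k'}{k'}\le(2et/k')^{k'}$, and since the hypothesis ``$pt/h$ sufficiently small'' means $t\le\epsilon h/p$ while $h/k'<6\sigma$, we have $2et/k'<12e\epsilon\sigma/p$; distributing this factor over the $D_j'$ and pairing the $1/p$ against the $p$ (resp.\ $p_\sigma$, $P_\gamma$) at each single site (resp.\ $\sigma$-cell, $\gamma$-cell) makes each per-droplet factor at most $\tfrac12$, provided $\epsilon$ is small enough depending on $\sigma$ and $p$ small depending on $\epsilon,\sigma$; so each shape contributes at most $2^{-k'}$ and the sum over $k'>h/(6\sigma)$ is at most $h^3 2^{-h/(6\sigma)}$. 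Since both $\frac{\log(1/p)}{12\sigma}$ and $\frac{\log 2}{6\sigma}$ exceed $1/\gamma=p^3$ for $p$ small, each case yields $\P_p(V_{\Gamma^c}(h,t))\le h^3 e^{-h/\gamma}$ (when $h$ is too small for these exponents to exceed $1$, the same geometric series is bounded by $p^{1/2}$, which is at most $h^3e^{-h/\gamma}$ for $p$ small).

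The main obstacle is the regime $t>k'$, where the heuristic that infection crosses a subcritical $M$-slab at speed $\Theta(p)$ must be turned into a genuine bound: the displacement count $\binom{t+k'}{k'}$ is large, and it can only be overcome by using the hypothesis $t\lesssim h/p$ \emph{together with} the structural lower bound $k'\gtrsim h/\sigma$ coming from $n_\gamma<hp/\gamma$, with the quantifiers ordered as $\sigma$ large, then $pt/h$ small, then $p$ small. The remaining delicate points — the quantitative $\gamma$-cell estimate through Lemma~\ref{le:AL}, and the verification that the restricted wave's droplets stack with bounded vertical gaps so that $h\le 3(n_1+\sigma n_\sigma+\gamma n_\gamma)$ — are routine but need care.
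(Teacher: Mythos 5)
Your proposal is correct, and it follows the paper's overall strategy closely: a union bound over $(1,\sigma,\gamma)$-restrictions of subcritical waves, reduction via Lemma~\ref{le:isrestricted} and Lemma~\ref{le:htrestricted} to disjoint internally spanned droplets at three scales, per-droplet probability estimates via Lemmas~\ref{le:AL} and~\ref{le:phi}, and exploitation of the constraint $n_\gamma < hp/\gamma$ to force the height to be carried by many small droplets. Where you genuinely diverge is the final optimization. The paper applies Stirling to the displacement binomial, arrives at a sum of the form $\sum (Ct/(a{+}b{+}c))^{a+b+c} p^{a+2b+3c}$ over the admissible triples, and then defers to Lemma~\ref{le:calc} in the Appendix — a small calculus exercise that locates the boundary minimum of the resulting exponent over the constrained region. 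You instead run a concrete case split on $t\le k'$ versus $t>k'$: in the first case $\binom{t+k'}{k'}$ is absorbed outright, and in the second you distribute $(2et/k')^{k'}\lesssim(\epsilon\sigma/p)^{k'}$ over the $k'$ droplets and pair each $1/p$ against the per-droplet probability, using the structural bound $k'>h/(6\sigma)$. This makes the consumption of the hypothesis ``$pt/h$ small'' more visible than in the appendix calculation, at the modest cost of having the threshold for $pt/h$ depend on $\sigma$ (the paper keeps $\epsilon$ absolute and pushes the $\sigma$-dependence onto the threshold for $p$); either quantifier order is compatible with the application in Lemma~\ref{le:subandfast}, where $\sigma$ is fixed before the constant $c$ in the definition of a slow $M$-slab. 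Both routes are sound; yours is more transparent about where each hypothesis enters, the paper's keeps the body of the proof shorter by outsourcing the optimization.
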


\begin{proof}
Our first task is to find an upper bound for the number of possible $(1,\sigma,\gamma)$-restrictions of an anchored wave of a given height and time. To that end, let $W'=(D_1',\dots,D_{k'}')$ be a sequence of droplets such that there exists a wave of which $W'$ is the $(1,\sigma,\gamma)$-restriction. (The only reason we demand the existence of the wave is to limit the number of possibilities for the positions of the droplets $D_i'$.) Let $a$ be the number of single site droplets in $W'$, $b$ the number of $\sigma$-cells, and $c$ the number of $\gamma$-cells. Thus $k'=a+b+c$. Fix also the time $t$ of $W'$ and anchor $W'$ at $(0,0)$. The number of choices for the order of the three sizes of droplets is
\begin{equation}\label{eq:dropsizes}
\binom{a+b+c}{a,b,c} \leq 3^{a+b+c}.
\end{equation}
If $D_i'$ is a single site then there are two choices for the vertical displacement $b_i'-d_{i-1}'$; specifically, either $b_i'=d_{i-1}'+1$ or $b_i'=d_{i-1}'+2$. Similarly if $D_i'$ is a $\sigma$-cell there are $\sigma+1$ choices for $b_i'$, and if it is a $\gamma$-cell there are $\gamma+1$ choices. In total this gives
\begin{equation}\label{eq:dropvert}
2^a(\sigma+1)^b(\gamma+1)^c \leq C^{a+b+c}\sigma^b\gamma^c
\end{equation}
choices for the vertical displacements between the droplets, for some $C>0$. Next, recall that
\[
t(W') = \sum_{i=1}^{a+b+c} (t_i'-1),
\]
where $t_i'=\max\{a_{i+1}'-c_i',a_i'-c_{i+1}',1\}$. Thus there are $\binom{t}{a+b+c}$ choices for the $t_i'$ and a further $3^{a+b+c}$ choices for whether we have $a_{i+1}'-c_i'\geq 1$, or $a_i'-c_{i+1}'\geq 1$, or neither. If neither then the droplets overlap horizontally; in this case there are (crudely) at most $(3\sigma)^{2b}(3\gamma)^{2c}$ choices for $a_{i+1}'-a_i'$. (If the larger of the two cells is a $\gamma$-cell then there are at most $3\gamma$ choices for the displacement, while if it is a $\sigma$-cell then there are at most $3\sigma$ such choices. Furthermore, each cell can contribute to at most two displacements.) In total, there are at most
\begin{equation}\label{eq:drophoriz}
\binom{t}{a+b+c} 3^{a+b+c} (3\sigma)^{2b} (3\gamma)^{2c} \leq C^{a+b+c} \sigma^{2b} \gamma^{2c} \binom{t}{a+b+c}
\end{equation}
choices for the horizontal displacements $a_{i+1}'-a_i'$, for some $C>0$. Multiplying together the number of choices for the order of the sizes of the droplets \eqref{eq:dropsizes}, the vertical displacements \eqref{eq:dropvert}, and the horizontal displacements \eqref{eq:drophoriz}, we have that the number of possibilities for the sequence of droplets in the restricted wave $W'$ is at most
\[
C^{a+b+c} \sigma^{3b} \gamma^{3c} \binom{t}{a+b+c}
\]
for some constant $C>0$. Since $\sigma$ is also a constant and $\gamma=p^{-3}$, this is at most
\begin{equation}\label{eq:numwaves}
C^{a+b+c} p^{-9c} \binom{t}{a+b+c}.
\end{equation}

Our next task is to bound the probability that a given $W'$ is the $(1,\sigma,\gamma)$-restriction of a wave of internally spanned droplets. The probability that a $\sigma$-cell contains an internally spanned droplet of semi-perimeter between $3$ and $\sigma+1$ is $O(p^2)$, because the internally spanned droplet must contain at least two initially infected sites, and there are a constant number of choices for their positions. The probability that a $\gamma$-cell contains an internally spanned droplet $D$ with semi-perimeter between $\sigma+2$ and $\gamma$ is $O(p^{\sigma/2-5})$. The reason is as follows. The internally spanned droplet $D$ must itself contain another internally spanned droplet $D'$ having semi-perimeter between $\sigma+2$ and $2(\sigma+2)$, by Lemma \ref{le:AL}. The probability that $D'$ is internally spanned is $O(p^{\sigma/2+1})$, by Lemma \ref{le:phi}, and there are $O(\gamma^2)$ choices for $D'$, so the probability that a $\gamma$-cell contains an internally spanned droplet of semi-perimeter between $\sigma+2$ and $\gamma$ is at most
\[
O\big( p^{\sigma/2+1} \gamma^2 \big) = O(p^{\sigma/2-5}).
\]
Hence, the probability that there exists a wave $W$ of internally spanned droplets such that $W'$ is the $(1,\sigma,\gamma)$-restriction of $W$ is at most
\begin{equation}\label{eq:probwave}
p^a (Cp^2)^b (Cp^{\sigma/2-5})^c = C^{a+b} p^{a+2b+(\sigma/2-5)c}
\end{equation}
for some positive constant $C$.

The wave $W$ has height $h$ and time $t$, so its $(1,\sigma,\gamma)$-restriction has height at least $h$ and time at most $t$, by Lemma \ref{le:htrestricted}. Hence we have $2a+(\sigma+1)b+(\gamma+1)c\geq h$, or very crudely, $a+\sigma b+\gamma c\geq h/2$. Combining this with the bound in \eqref{eq:numwaves} for number of possibilities for $W'$ and the bound in \eqref{eq:probwave} for the probability that a given $W'$ is the $(1,\sigma,\gamma)$-restriction of a wave of internally spanned droplets gives
\begin{equation}\label{eq:Vht}
\P_p\big( V_{\Gamma^c}(h,t) \big) \leq \sum_{\substack{0\leq a,b,c\leq h \\ a+\sigma b+\gamma c\geq h/2 \\ c\leq hp/\gamma}} C^{a+b+c} \binom{t}{a+b+c} p^{a+2b+(\sigma/2-14)c}.
\end{equation}
Hence, by Stirling's formula and the assumption that $\sigma$ is sufficiently large, we have
\begin{equation}\label{eq:nasty}
\P_p\big( V_{\Gamma^c}(h,t) \big) \leq \sum_{\substack{0\leq a,b,c\leq h \\ a+\sigma b+\gamma c\geq h/2 \\ c\leq hp/\gamma}} \left(\frac{Ct}{a+b+c}\right)^{a+b+c} p^{a+2b+3c}.
\end{equation}
The calculation needed in order to bound the right-hand side of \eqref{eq:nasty} is routine and unenlightening, so it is deferred until Lemma \ref{le:calc} in the Appendix. (It is in this calculation that use the assumption that $c<hp/\gamma$.) The calculation implies that
\begin{equation}\label{eq:VGammac}
\P_p\big( V_{\Gamma^c}(h,t) \big) \leq \sum_{\substack{0\leq a,b,c\leq h \\ a+\sigma b+\gamma c\geq h/2 \\ c\leq hp/\gamma}} e^{-(a+b+c)}.
\end{equation}
Thus, using the observation that $a+b+c\geq h/\gamma$, we have
\[
\P_p\big( V_{\Gamma^c}(h,t) \big) \leq h^3 e^{-h/\gamma},
\]
which is the inequality claimed in the statement of the lemma.
\end{proof}

\begin{lemma}\label{le:manycrit}
Let $pt/h=O(1)$, let $p$ be sufficiently small, and suppose that $\sigma\geq 36$. Then
\[
\P_p\big(V_\Gamma(h,t)\big) \leq e^{-hp^4}.
\]
\end{lemma}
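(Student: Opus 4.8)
The plan is to follow the proof of Lemma~\ref{le:fewcrit}. Combining the bound \eqref{eq:numwaves} on the number of $(1,\sigma,\gamma)$-restrictions of an anchored wave of height $h$ and time $t$ with the bound \eqref{eq:probwave} on the probability that such a restricted wave comes from a wave of internally spanned droplets, one gets
\[
\P_p\big(V_\Gamma(h,t)\big)\leq\sum_{\substack{0\leq a,b,c\leq h\\ a+\sigma b+\gamma c\geq h/2\\ c\geq hp/\gamma}}C^{a+b+c}\binom{t}{a+b+c}\,p^{\,a+2b+(\sigma/2-14)c},
\]
the only change from \eqref{eq:Vht} being that the constraint $c\leq hp/\gamma$ is replaced by $c\geq hp/\gamma$, which is the defining property of $V_\Gamma(h,t)$. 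Since $\gamma=p^{-3}$ we have $hp/\gamma=hp^{4}$ and $\sigma/2-14\geq 4$. We may assume $h\geq\gamma$, since otherwise no $\gamma$-cell fits inside a wave of height $h$ and $V_\Gamma(h,t)$ is empty; in particular $h\geq p^{-3}$ is large, so factors polynomial in $h$ will be negligible against the bound $e^{-hp^{4}}$ we are after.

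To estimate the sum, put $k=a+b+c$, use $\binom{t}{k}\leq(et/k)^{k}$, and write $p^{a+2b+(\sigma/2-14)c}\leq p^{k}\,p^{(\sigma/2-15)c}$, so that each term is at most $(Cetp/k)^{k}\,p^{(\sigma/2-15)c}$. I would then split into two regimes. When $k$ is of order $h$, the factor $(Cetp/k)^{k}$ decays geometrically in $k$ (here one uses $pt/h=O(1)$, with the implicit constant taken small enough), so these terms are exponentially small in $h$. When $k$ is small compared with $h$, the height constraint $a+\sigma b+\gamma c\geq h/2$ is the essential input: a single site or a $\sigma$-cell contributes only a bounded amount to the vertical extent of a restricted wave whereas each $\gamma$-cell spans $\gamma$, so $k$ droplets can reach height $h$ only if $\gamma c$ is at least a constant multiple of $h$; hence in this regime $c$ is a constant fraction of $k$ and the restricted wave is $\gamma$-cell dominated. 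For such a wave the probability cost $p^{\sigma/2-5}$ of each $\gamma$-cell must pay for its positional multiplicity $\gamma^{3}=p^{-9}$ and for the horizontal freedom it enjoys in the wave, leaving a net factor which is a bounded power of $p$ per $\gamma$-cell — and it is exactly the hypothesis $\sigma\geq 36$ that keeps this net exponent nonnegative, so that the term is bounded by a constant to the power $c$ times a bounded power of $p$. In both regimes every surviving term is at most $p^{\,\rho c}$ for an absolute constant $\rho>0$ (once $p$ is small), with $c\geq hp^{4}$, so summing over the polynomially many triples $(a,b,c)$ gives
\[
\P_p\big(V_\Gamma(h,t)\big)\leq\mathrm{poly}(h)\cdot p^{\,\rho hp^{4}}=\mathrm{poly}(h)\cdot e^{-\rho hp^{4}\log(1/p)}\leq e^{-hp^{4}}
\]
for $p$ sufficiently small, since $\log(1/p)\to\infty$ and $h\geq p^{-3}$.

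The hard part is the sparse regime $k\ll h$. In Lemma~\ref{le:fewcrit} the number $c$ of $\gamma$-cells is small, which is precisely why the routine estimate of Lemma~\ref{le:calc} applies termwise; here $c$ may be a constant fraction of $k$, so that estimate is unavailable and the sum has to be estimated by hand. The two points needing care are the geometric claim that a restricted wave with few droplets but height $h$ is $\gamma$-cell dominated — which is what the constraint $a+\sigma b+\gamma c\geq h/2$ together with $k\leq h$ records — and the bookkeeping that the probability factor $p^{\sigma/2-5}$ of a $\gamma$-cell overcomes both its positional multiplicity $\gamma^{3}$ and the horizontal freedom of a wave, by a further factor of at least $p$; it is this surplus factor of $p$ per $\gamma$-cell, together with the hypothesis $c\geq hp^{4}$, that both damps the sum and produces the $e^{-hp^{4}}$ decay. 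The hypothesis $\sigma\geq 36$ is calibrated precisely so that this bookkeeping works with essentially no slack to spare.
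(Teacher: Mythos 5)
The approach does not work as stated, and the gap is substantive. You propose to reuse the bounds \eqref{eq:numwaves} and \eqref{eq:probwave} from the proof of Lemma~\ref{le:fewcrit} and simply flip the constraint from $c\leq hp/\gamma$ to $c\geq hp/\gamma$. But the resulting sum has terms that are not merely failing to decay --- some are exponentially \emph{large} in $h$, so no amount of ``bookkeeping'' can rescue it. The culprit is the single-site droplets, not the $\gamma$-cells. Take $b=0$, $c\approx h/\gamma$ (the smallest $c$ for which $\gamma c\geq h/2$ with no help from $a$ or $b$), and $a\approx \epsilon h/e$ where $\epsilon = C\,pt/h$. This triple satisfies both $a+\sigma b+\gamma c\geq h/2$ and $c\geq hp/\gamma$. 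Writing $k=a+b+c\approx\epsilon h/e$, the factor $\binom{t}{k}p^{k}\approx(\epsilon h/k)^{k}=e^{\epsilon h/e}$ is exponentially large in $h$, while the residual probability $p^{b+3c}=p^{3h/\gamma}=p^{3hp^{3}}=e^{-3hp^{3}\log(1/p)}$ tends to $1$ far too slowly to compensate, since $p^{3}\log(1/p)\to 0$. The point is that a single site costs probability $p$ but enjoys horizontal combinatorial freedom of order $t/k$, and in this regime $t/k\gg 1/p$, so each single site \emph{loses} more combinatorially than it gains probabilistically. In Lemma~\ref{le:fewcrit} the constraint $c\leq hp/\gamma$ prevents this pathology: there the height constraint forces $b$ to be proportional to $h$, producing a factor $p^{h/\sigma}$ which crushes $e^{\epsilon h/e}$. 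Once you allow $c$ up to order $h/\gamma$, that mechanism disappears, and your claim that ``every surviving term is at most $p^{\rho c}$'' fails. Your sentence asserting a net surplus factor of $p$ per $\gamma$-cell after paying for $\gamma^{3}$ and horizontal freedom is also not correct with these crude bounds: per $\gamma$-cell the combinatorial cost is $\gamma^{3}\cdot(et/c)\approx p^{-9}\cdot(pt/h)\gamma/p = (pt/h)p^{-13}$ and the probability gain is $p^{\sigma/2-5}=p^{13}$, so the net factor is $O(pt/h)$ --- a constant, not a power of $p$.

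The paper's proof of Lemma~\ref{le:manycrit} sidesteps all of this with a genuinely different union bound: it ignores the single sites and $\sigma$-cells entirely, both in the combinatorics and in the probability, and counts only the positions of the $\gamma$-cells. This costs nothing probabilistically (we were going to throw away the $p^{a}(Cp^{2})^{b}$ anyway in an upper bound) and gains enormously combinatorially: the vertical positions of $c$ $\gamma$-cells contribute only $\binom{h}{c}$, the sum of horizontal offsets between \emph{consecutive} $\gamma$-cells is shown to be $O(t)$ contributing $\binom{O(t)}{c}$, and horizontal overlaps contribute only $(9\gamma)^{c}$ rather than the $(3\gamma)^{2c}$ of \eqref{eq:drophoriz} (each overlap is now counted once, not attributed to both endpoints). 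The net factor per $\gamma$-cell becomes $C(pt/h)\gamma^{3}p^{\sigma/2-8}=C(pt/h)\,p$ for $\sigma=36$, a genuine surplus of $p$, and $\sum_{c\geq hp/\gamma}(Cp)^{c}\leq e^{-hp/\gamma}=e^{-hp^{4}}$. Your intuition about which quantities must balance is in the right neighbourhood, but the decision to keep tracking the small droplets is what breaks the argument, and that decision is precisely what the paper's proof is designed to avoid.
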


\begin{proof}
Suppose $V_\Gamma(h,t)$ occurs. Let $W$ be the associated subcritical wave and let $W'=(D_1',\dots,D_{k'}')$ be its $(1,\sigma,\gamma)$-restriction, with $D_i'=[(a_i',b_i'),(c_i',d_i')]$ for $1\leq i\leq k'$. Recall that the number of $\gamma$-cells in $W'$ must be at least $hp/\gamma$, by the definition of the event $V_\Gamma(h,t)$. Let the $\gamma$-cells in $W'$ be $D_{i_1}'\dots,D_{i_c}'$, where $c\geq hp/\gamma$. We would like to bound the number of possible positions for these $\gamma$-cells. First, no two droplets can have the same $b_i$ coordinate, so there are at most $\binom{h}{c}$ choices for $b_{i_1}',\dots,b_{i_c}'$. Next, recall that the time of $W'$ is defined to be
\[
t(W') = \sum_{i=1}^{k'-1} (t_i'-1),
\]
where $t_i' = \max\{ a_{i+1}'-c_i',a_i'-c_{i+1}',1 \}$ is the horizontal offset between droplets $D_i'$ and $D_{i+1}'$. Suppose $D_{i_{j+1}}'$ lies to the right of $D_{i_j}'$, so $a_{i_{j+1}}'-c_{i_j}'\geq 1$. Observe that $c_l'-a_l'\leq\sigma$ whenever $D_l'$ is not a $\gamma$-cell, and hence
\[
a_{i_{j+1}}'-c_{i_j}' = \sum_{l=i_j}^{i_{j+1}-1} (a_{l+1}'-c_l') + \sum_{l=i_j+1}^{i_{j+1}-1} (c_l'-a_l') \leq \sum_{l=i_j}^{i_{j+1}-1} t_l' + (i_{j+1}-i_j-1)\sigma.
\]
The same inequality holds for $a_{i_j}'-c_{i_{j+1}}'$ if $D_{i_{j+1}}'$ lies to the left of $D_{i_j}'$, so we have
\[
\max\{ a_{i_{j+1}}'-c_{i_j}',a_{i_j}'-c_{i_{j+1}}',0 \} \leq \sum_{l=i_j}^{i_{j+1}-1} t_l' + (i_{j+1}-i_j-1)\sigma.
\]
Summing over $\gamma$-cells we obtain
\[
\sum_{j=1}^c \max\{a_{i_{j+1}}'-c_{i_j}',a_{i_j}'-c_{i_{j+1}}',0\} \leq \sum_{i=1}^{k'} t_i' + (k'-c)\sigma = (t(W')+k') + (k'-c)\sigma = O(t),
\]
where here we have used $t(W')\leq t(W)=t$ from Lemma \ref{le:htrestricted} and $k'\leq t(W')$. Hence there are at most $\binom{O(t)}{c}$ choices for the absolute values of the non-zero horizontal offsets. There are a further $3^c$ choices for whether $a_{i_{j+1}}'-c_{i_j}'\geq 1$, or $a_{i_j}'-c_{i_{j+1}}'\geq 1$, or neither. When the offset is zero, there are at most $3\gamma$ choices for the value of $a_{i_{j+1}}'-a_{i_j}'$. The differences $a_{i_{j+1}}'-a_{i_j}'$ together with the $b_{i_j}'$ uniquely define the positions of the $\gamma$-cells. It follows from these observations that there are at most
\begin{equation}\label{eq:gammachoices}
\binom{h}{c} \binom{O(t)}{c} (9\gamma)^c 
\end{equation}
choices for the $\gamma$-cells.

By the same calculation as in the proof of Lemma \ref{le:fewcrit}, the probability that a $\gamma$-cell contains an internally spanned droplet with semi-perimeter between $\sigma+2$ and $\gamma$ is $O(p^{\sigma/2-5})$, and by Lemma \ref{le:isrestricted}, each $\gamma$-cell must contain such an internally spanned droplet. Combined with the bound for the number of choices of $\gamma$-cells in \eqref{eq:gammachoices}, this proves that
\[
\P_p\big( V_\Gamma(h,t) \big) \leq \sum_{c\geq hp/\gamma} \binom{h}{c} \binom{O(t)}{c} (C\gamma p^{\sigma/2-5})^c.
\]
Using Stirling's formula, we have
\[
\P_p\big( V_\Gamma(h,t) \big) \leq \sum_{c\geq hp/\gamma} \left(\frac{Cht\gamma p^{\sigma/2-5}}{c^2}\right)^c \leq \sum_{c\geq hp/\gamma} \left(C(pt/h)\gamma^3 p^{\sigma/2-8}\right)^c
\]
for some new constant $C$. Now, $pt/h=O(1)$, $\gamma=p^{-3}$, and $\sigma\geq 36$, so
\[
\P_p\big( V_\Gamma(h,t) \big) \leq \sum_{c\geq hp/\gamma} (Cp)^c \leq e^{-hp/\gamma},
\]
again for a new $C$, provided $p$ is sufficiently small.
\end{proof}

An $M$-slab $D$ is defined to be \emph{slow} if $[[D]]_{cM/p} \neq D$, where $c<1/2$ is a small positive constant, and otherwise it is \emph{fast}. We write $F(D)$ for the event that $D$ is fast.

In the following lemma and in the proof of Theorem \ref{th:small} we shall need to use Harris's Lemma \cite{Harris} (later generalized by Fortuin, Kasteleyn, and Ginibre \cite{FKG}, and now better known as the FKG inequality) to bound the probabilities of certain intersections of non-independent increasing and decreasing events. The definitions of \emph{increasing} and \emph{decreasing} are the usual percolation-theoretic definitions: an event $E$ is increasing if for all pairs of configurations $\omega$, $\omega'\in\{0,1\}^{[n]^2}$ such that $\omega\subset\omega'$, the implication $\omega\in F \; \Rightarrow \; \omega'\in F$ holds; it is \emph{decreasing} if the converse implication $\omega'\in F \; \Rightarrow \; \omega\in F$ holds. Harris's Lemma is as follows.

\begin{lemma}\label{le:harris}
Let $E$ and $F$ be increasing events and let $G$ be a decreasing event. Then
\[
\P_p(E\cap F) \geq \P_p(E) \cap \P_p(F)
\]
and
\[
\P_p(E\cap G) \leq \P_p(E) \cap \P_p(G) \tag*{\qed}
\]
\end{lemma}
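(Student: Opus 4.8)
The plan is to prove the first inequality by the standard induction on the number of sites and then deduce the second by passing to complements. Identify a configuration with an element $\omega\in\{0,1\}^N$, where $N=n^2$, under the product measure $\P_p$ in which the coordinates are independent with each equal to $1$ with probability $p$; write $f=\mathbf{1}_E$ and $g=\mathbf{1}_F$, so that $f,g$ are increasing $\{0,1\}$-valued functions on $\{0,1\}^N$ and the first claim reads $\E_p[fg]\geq\E_p[f]\,\E_p[g]$.

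First I would handle the case $N=1$. If $f,g\colon\{0,1\}\to\{0,1\}$ are both increasing, then for any $x,y\in\{0,1\}$ the numbers $f(x)-f(y)$ and $g(x)-g(y)$ have the same sign, so $(f(x)-f(y))(g(x)-g(y))\geq0$; taking $x,y$ to be independent copies of the single coordinate and expanding gives $2\E_p[fg]-2\E_p[f]\,\E_p[g]\geq0$, which is exactly what is wanted.

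For the inductive step, assume the inequality for product measures on $\{0,1\}^{N-1}$ and condition on the last coordinate $\omega_N$. For each fixed value of $\omega_N$ the conditional law is a product measure on the remaining $N-1$ coordinates, and the restricted functions $f(\cdot,\omega_N)$, $g(\cdot,\omega_N)$ are still increasing, so the induction hypothesis gives $\E_p[fg\mid\omega_N]\geq\E_p[f\mid\omega_N]\,\E_p[g\mid\omega_N]$. Setting $\tilde f(\omega_N)=\E_p[f\mid\omega_N]$ and $\tilde g(\omega_N)=\E_p[g\mid\omega_N]$, it is immediate from the definitions that $\tilde f$ and $\tilde g$ are increasing functions of the single variable $\omega_N$, so the $N=1$ case yields $\E_p[\tilde f\,\tilde g]\geq\E_p[\tilde f]\,\E_p[\tilde g]=\E_p[f]\,\E_p[g]$. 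Averaging the conditional inequality over $\omega_N$ and combining with this gives $\E_p[fg]\geq\E_p[\tilde f\,\tilde g]\geq\E_p[f]\,\E_p[g]$, which completes the induction and hence proves the first inequality.

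For the second inequality, observe that if $G$ is decreasing then $G^c$ is increasing, so applying the first inequality to $E$ and $G^c$ gives $\P_p(E\cap G^c)\geq\P_p(E)\,\P_p(G^c)$; since $\P_p(E\cap G^c)=\P_p(E)-\P_p(E\cap G)$ and $\P_p(G^c)=1-\P_p(G)$, rearranging gives $\P_p(E\cap G)\leq\P_p(E)\,\P_p(G)$. This is the classical proof of Harris's inequality, so I do not anticipate any genuine obstacle; the only step calling for (entirely routine) care is the verification that conditioning an increasing function on a subset of the coordinates leaves it increasing.
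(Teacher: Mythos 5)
Your proof is correct, and it is the standard inductive proof of Harris's inequality: establish the one-coordinate case by expanding the nonnegative quantity $\E\bigl[(f(x)-f(y))(g(x)-g(y))\bigr]$ for independent copies $x,y$, then induct by conditioning on one coordinate and checking that the conditional expectations $\tilde f, \tilde g$ remain increasing, and finally pass to complements for the decreasing event. The paper itself gives no proof of this lemma — it states it with a \qed and cites Harris and Fortuin--Kasteleyn--Ginibre as the source, treating it as a classical result — so there is nothing in the paper for your argument to diverge from. (Incidentally, the displayed statement in the paper contains a typo, writing $\P_p(E)\cap\P_p(F)$ for the product $\P_p(E)\,\P_p(F)$; you have correctly read it as a product.) One small point worth making explicit in the inductive step is the verification that $\tilde f(\omega_N)=\E_p[f\mid\omega_N]$ is increasing in $\omega_N$: for $\omega_N\leq\omega_N'$ and each fixed configuration of the other coordinates, monotonicity of $f$ gives $f(\cdot,\omega_N)\leq f(\cdot,\omega_N')$ pointwise, and averaging over the product measure on the remaining coordinates preserves the inequality. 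You flag this as routine, which it is, but since it is the step where the conditional structure is actually used it deserves to be written out.
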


\begin{lemma}\label{le:subandfast}
Let $D$ be an $M$-slab and let $p$ be sufficiently small. Then
\[
\P\big( F(D) \given \Gamma(D)^c \big) \leq \frac{1}{2}.
\]
\end{lemma}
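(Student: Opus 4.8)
The plan is to deduce from the event $F(D)\cap\Gamma(D)^c$ the existence of a tall, fast, subcritical wave of internally spanned droplets inside $D$, to strip off the conditioning on $\Gamma(D)^c$ by an application of Harris's Lemma, and then to bound the resulting sum of wave probabilities using Lemmas~\ref{le:fewcrit} and~\ref{le:manycrit}.

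First I would apply Lemma~\ref{le:wave} at the centre of the slab. Write $D=[(a,b),(c,d)]$ with $\lg(D)=M/p$ and $\sh(D)=M$, say with the short side vertical, and let $x$ be the centre of $D$, so that $w(x)\geq M/(2p)$ and $h(x)\geq M/2-1$. If $F(D)\cap\Gamma(D)^c$ occurs then, since $D$ is fast, every site of $D$ --- in particular $x$ --- has flood time at most $cM/p<M/(2p)\leq w(x)$, using $c<1/2$. Hence Lemma~\ref{le:wave} produces an up- or down-wave $W=(D_1,\dots,D_k)$ in $[[D]]_0$ with $h(W)\geq h(x)\geq M/2-1$ and $t(W)\leq cM/p$, whose droplets are internally spanned (they are maximal droplets of the span of $A\cap D$) and contained in $D$; as $\Gamma(D)^c$ holds, each $D_i$ has semi-perimeter at most $\gamma$, so $W$ is subcritical. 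Note also that $pt(W)/h(W)$ is at most a small constant multiple of $c$, which we may therefore make as small as the hypotheses of Lemmas~\ref{le:fewcrit} and~\ref{le:manycrit} demand.

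Thus $F(D)\cap\Gamma(D)^c$ is contained in the union, over all combinatorial waves $W=(D_1,\dots,D_k)$ inside $D$ with $\phi(D_i)\leq\gamma$ for every $i$, with $h(W)\in[M/2-1,M]$ and with $t(W)\leq cM/p$, of the events $\big(I(D_1)\cap\dots\cap I(D_k)\big)\cap\Gamma(D)^c$. For each fixed such $W$ the event $I(D_1)\cap\dots\cap I(D_k)$ is increasing and $\Gamma(D)^c$ is decreasing, so Harris's Lemma (Lemma~\ref{le:harris}) gives
\[
\P_p\Big(\big(I(D_1)\cap\dots\cap I(D_k)\big)\cap\Gamma(D)^c\Big) \leq \P_p\big(I(D_1)\cap\dots\cap I(D_k)\big)\cdot\P_p\big(\Gamma(D)^c\big).
\]
Summing over $W$ and dividing by $\P_p(\Gamma(D)^c)$ yields
\[
\P_p\big(F(D)\given\Gamma(D)^c\big) \leq \sum_W \P_p\big(I(D_1)\cap\dots\cap I(D_k)\big).
\]
The key point of this step is that it requires no lower bound on $\P_p(\Gamma(D)^c)$, which in the relevant range of parameters may be far too small for a cruder argument.

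It remains to bound the sum. Grouping the waves by their anchor (at most $|D|=M^2/p$ choices) and by $\big(h(W),t(W)\big)$ (at most $M\cdot(cM/p+1)$ further choices), one is reduced to showing that, for a fixed anchor, $h$ and $t$, the sum of $\P_p\big(I(D_1)\cap\dots\cap I(D_k)\big)$ over all subcritical waves with that anchor, height $h$ and time $t$ is small. This is precisely the quantity controlled by the union-bound computations in the proofs of Lemmas~\ref{le:fewcrit} and~\ref{le:manycrit}: splitting according to whether the $(1,\sigma,\gamma)$-restriction of $W$ contains at least $hp/\gamma$ $\gamma$-cells or fewer, and using Lemma~\ref{le:isrestricted} to keep the internally spanned droplets associated to the restriction disjoint, this sum is at most $h^3e^{-h/\gamma}+e^{-hp^4}\leq 2e^{-hp^4}$. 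Since $h\geq M/2-1$, $M=A\sqrt{p\log(n/K)}K$, and $K=e^{(\lambda+o(1))/p}$, we have $Mp^4\gg\log M$ for $p$ small, and hence
\[
\P_p\big(F(D)\given\Gamma(D)^c\big) \leq \frac{M^2}{p}\cdot M\cdot\Big(\frac{cM}{p}+1\Big)\cdot 2e^{-(M/2-1)p^4} \leq \frac12
\]
once $p$ is sufficiently small, as required.

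I expect the main obstacle to be this last step --- more precisely, checking that the $(1,\sigma,\gamma)$-restriction machinery of Lemmas~\ref{le:fewcrit}, \ref{le:manycrit} and~\ref{le:isrestricted} really does bound the \emph{sum} $\sum_W\P_p\big(I(D_1)\cap\dots\cap I(D_k)\big)$ over genuine (unrestricted) waves, rather than merely the probability of their union; this is exactly what the restriction to three droplet sizes is there to make possible. By comparison, the extraction of the wave from Lemma~\ref{le:wave} and the removal of the conditioning via Harris's Lemma are both quite short.
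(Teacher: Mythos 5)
Your overall strategy matches the paper's, but there is a genuine gap in the final step. You apply Harris's Lemma to the event $I(D_1)\cap\dots\cap I(D_k)$ for each individual combinatorial wave $W$ and then sum, arriving at $\sum_W\P_p\big(I(D_1)\cap\dots\cap I(D_k)\big)$; you then assert that this sum is ``precisely the quantity controlled'' by Lemmas~\ref{le:fewcrit} and~\ref{le:manycrit}. It is not: those lemmas bound $\P_p\big(V_{\Gamma^c}(h,t)\big)$ and $\P_p\big(V_\Gamma(h,t)\big)$, which are probabilities of \emph{unions} over waves, not sums of per-wave probabilities. In their proofs, the $(1,\sigma,\gamma)$-restriction collapses many waves onto a single restricted sequence precisely so that Lemma~\ref{le:isrestricted} can be invoked to bound the \emph{existence} probability per restriction by a product over disjoint cells; the droplets that the restriction algorithm removes contribute additional combinatorial and probabilistic factors to a per-wave sum that the existence bound never has to account for. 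You flag this yourself as ``the main obstacle,'' and it is left open in your write-up. The paper sidesteps the difficulty entirely by observing that $V_\Gamma(h,t)$ and $V_{\Gamma^c}(h,t)$ are themselves \emph{increasing} events, so Harris's Lemma can be applied directly to them, giving $\P_p\big(V_\Gamma(h,t)\given\Gamma(D)^c\big)\leq\P_p\big(V_\Gamma(h,t)\big)$ and likewise for $V_{\Gamma^c}$, after which Lemmas~\ref{le:fewcrit} and~\ref{le:manycrit} apply verbatim. Your argument closes cleanly once you group by the triple (anchor, $h$, $t$) \emph{before} applying Harris's Lemma rather than after.

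A smaller issue: you take $x$ to be the centre of $D$ when invoking Lemma~\ref{le:wave}, but that lemma requires $x$ to have \emph{strictly positive} flood time, and the centre may lie in $[[D]]_0$. The paper instead uses the subcriticality of $D$ to find a site $x$ with strictly positive flood time within $\ell_1$-distance $\gamma$ of the centre: the disjoint closed droplets comprising $[[D]]_0$ have semi-perimeter at most $\gamma$ and pairwise $\ell_1$-distance at least $3$, so they cannot cover a $\gamma\times\gamma$ neighbourhood of the centre. For such an $x$ one still has $h(x)\geq M/4$ and $w(x)\geq M/(4p)>cM/p$, which is all that is needed.
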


\begin{proof}
Suppose an $M$-slab $D$ is subcritical and fast. There must exist a site $x$ with strictly positive flood time $t\leq\tau:=cM/p$ such that $x$ has $\ell_1$ distance at most $\gamma$ from the centre of $D$. Certainly $t<w(x)$, so Lemma \ref{le:wave} implies that $[[D]]_0$ contains an up- or down-wave $W=(D_1,\dots,D_k)$ with height at least $h(x)\geq M/4$ and time at most $t$. There are $M/p$ choices for the anchor of $W$ and two choices for whether it is an up- or down-wave. Hence
\[
\P_p\big(F(D) \given \Gamma(D)^c\big) \leq 2 (M/p) \sum_{t\leq\tau} \sum_{M/4\leq h\leq M} \Big( \P_p\big(V_\Gamma(h,t)\given\Gamma(D)^c\big) + \P_p\big(V_{\Gamma^c}(h,t)\given\Gamma(D)^c\big) \Big).
\]

The events $V_\Gamma(h,t)$ and $V_{\Gamma^c}(h,t)$ are increasing for all $h$ and $t$, while the event $\Gamma(D)^c$ is decreasing. So Harris's Lemma implies that
\[
\P_p\big(V_\Gamma(h,t)\given\Gamma(D)^c\big) = \frac{\P_p\big(V_\Gamma(h,t)\cap\Gamma(D)^c\big)}{\P_p\big(\Gamma(D)^c\big)} \leq \frac{\P_p\big(V_\Gamma(h,t)\big)\P_p\big(\Gamma(D)^c\big)}{\P_p\big(\Gamma(D)^c\big)} = \P_p\big(V_\Gamma(h,t)\big),
\]
and similarly $\P_p\big(V_{\Gamma^c}(h,t)\given\Gamma(D)^c\big) \leq \P_p\big(V_{\Gamma^c}(h,t)\big)$, in both cases for all $h$ and $t$. So we have
\[
\P_p\big(F(D) \given \Gamma(D)^c\big) \leq 2 (M/p) \sum_{t\leq\tau} \sum_{M/4\leq h\leq M} \Big( \P_p\big(V_\Gamma(h,t)\big) + \P_p\big(V_{\Gamma^c}(h,t)\big) \Big).
\]

Now we use the inequality for $\P_p(V_{\Gamma^c}(h,t))$ from Lemma \ref{le:fewcrit} and the inequality for $\P_p(V_\Gamma(h,t))$ from Lemma \ref{le:manycrit}. These give
\begin{align}\label{eq:subandfast}
\P_p\big(F(D)\given\Gamma(D)^c\big) &\leq 2 (M/p) \sum_{t\leq\tau} \sum_{M/4\leq h\leq M} \big( h^3 e^{-hp^3} + e^{-hp^4} \big) \notag \\
&= O\left( M^6 p^{-2} \exp(-Mp^4 / 4) \right).
\end{align}

Recall that $M=A\sqrt{p\log(n/K)}K$ and $K=\exp\big(\mu(p)/p\big)$. Now, $1/p \ll M$ and $\log M = O(1/p) \ll Mp^4$, so we can reduce \eqref{eq:subandfast} to
\[
\P_p\big(F(D)\given\Gamma(D)^c\big) \leq \exp(-Mp^4 / 8).
\]
Now, we have
\[
Mp^4 = p^{7/2} (\log n)^{1/2} K(p) = \exp\left( \frac{\lambda}{p} + \frac{\log\log n}{2} - o\left(\frac{1}{p}\right) \right) \to \infty.
\]
so $\P_p\big(F(D)\given\Gamma(D)^c\big) \leq 1/2$ if $p$ is sufficiently small.
\end{proof}

\subsection{Proof of the lower bound in Theorem \ref{th:small}}\label{se:pflowersmall}

\begin{proof}[Proof of the lower bound in Theorem \ref{th:small}]
As remarked in the introduction to Section \ref{se:lowersmall}, the bound $T\geq c(\log n)/p$ (with high probability) follows from Lemma \ref{le:lb}, so we only have to prove that
\[
T \geq c\sqrt{\frac{\log(n/K)}{p}}K
\]
with high probability, for some constant $c>0$. (In fact, we shall prove this statement with the same constant $c$ as in definition of a slow $M$-slab.)

Our proof will show that $[n]^2$ contains a slow $M$-slab with high probability. This will be sufficient, because the percolation time of $[n]^2$ is certainly at least the flood time of any given $M$-slab, and an $M$-slab was defined to be slow if its flood time was at least $cL/p$.

Let $D$ be an $M$-slab. We have
\begin{equation}\label{eq:probfast}
\P_p\big(F(D)^c\big) \geq \P_p\big(F(D)^c\cap\Gamma(D)^c\big) = \P_p\big(F(D)^c\given\Gamma(D)^c\big) \P_p\big(\Gamma(D)^c\big).
\end{equation}
The first probability, that $D$ is slow given that it is subcritical, is at least $1/2$ by Lemma \ref{le:subandfast}. So it remains for us to bound the second probability, that $D$ is subcritical.

Tile $D$ with $K$-cells $D_1,D_2,\dots,D_k$ so that neighbouring $K$-cells overlap by a distance of $2\gamma$; this ensures that any critical droplet in $D$ is entirely contained in at least one of the $K$-cells. The number of $K$-cells is
\[
k = \left(1+\frac{2\gamma}{K}\right)^2 \frac{M^2/p}{K^2} = (1+o(1))\log\left(\frac{n}{K}\right)
\]

The events $\Gamma(D_1)^c,\dots,\Gamma(D_k)^c$ are decreasing, so we can apply Harris's Lemma (Lemma \ref{le:harris}) to show that the probability $D$ is subcritical is
\[
\P_p\big(\Gamma(D)^c\big) = \P_p\big(\Gamma(D_1)^c\cap\dots\cap \Gamma(D_k)^c\big) \geq \P_p\big(\Gamma(D_1)^c\big) \dots \P_p\big(\Gamma(D_k)^c\big).
\]
By Lemma \ref{le:mu}, the probability any one of the $K$-cells is critical is at most $3/4$, so we have $\P_p\big(\Gamma(D)^c\big) \geq 4^{-k}$. Together with \eqref{eq:probfast} and the inequality $\P\big(F(D)^c\given\Gamma^c(D)\big)\geq 1/2$ from Lemma \ref{le:subandfast}, this implies that
\[
\P_p\big(F(D)^c\big) \geq 4^{-k-1} = \exp\big(-(\log 4 + o(1))\log(n/K)\big).
\]

Let $E$ be the event that every $M$-slab contained in $[n]^2$ is fast. By dividing $[n]^2$ into disjoint $M$-slabs, we see that the probability there does not exist a slow $M$-slab is
\[
\P_p(E) \leq \Big(1 - \exp\big(-(\log 4 + o(1))\log(n/K)\big)\Big)^{\frac{n^2p}{M^2}} \leq \exp\left(\frac{-e^{-(\log 2 + o(1))\log(n/K)}n^2}{\log(n/K) K^2}\right).
\]
Hence
\[
\P_p(E) \leq \exp\Big(-\exp\big( (2 - \log 4 + o(1))\log(n/K) - \log\log(n/K) \big)\Big).
\]
Recall that $K=\exp\big(\mu(p)/p\big)$, where $\mu(p)=\lambda+o(1)$. This, combined with the condition $\liminf p\log n\geq (1+\epsilon)\lambda$ for some $\epsilon>0$, implies that $\log(n/K)=\Omega(\log n)$ provided $p$ is sufficiently small, and in particular this means that $\P_p(E)=o(1)$. This completes the proof of Theorem \ref{th:small}.
\end{proof}

\section{Open Problems}

We begin this final section by asking to what extent the condition $\liminf p\log n>\lambda$ in the statement of Theorem \ref{th:small} can be weakened. In particular, we ask whether the theorem holds under the weaker assumption that
\[
\liminf_{n\to\infty} \, (\log n)^{3/2}\big(p-p_c([n]^2,2)\big) > 0.
\]
Here, the $(\log n)^{3/2}$ renormalizing factor is motivated by the result of Morris \cite{MorrisSharp}, which states that
\[
p_c([n]^2,2) = \frac{\lambda}{\log n} - \frac{\Theta(1)}{(\log n)^{3/2}}.
\]
Weakening the assumption still further, we ask the following:

\begin{question}
Is the percolation time concentrated if we assume only that
\[
\P_p\big(\text{a $p$-random subset of $[n]^2$ percolates}) = 1-o(1)?
\]
\end{question}

Note that, as observed in the introduction, we do not expect the percolation time to be concentrated when the probability of percolation is bounded away from 0 and 1. However, it is interesting to ask what happens when $p$ is in the subcritical regime, \emph{conditioned on percolation occurring}.

\begin{question}
Let $p$ be such that $\limsup p\log n<\lambda$ (that is, $p$ is \emph{subcritical}) and let $A$ be a $p$-random subset of $[n]^2$ conditioned on percolation occurring. Is the percolation time of $A$ concentrated?
\end{question}

Finally, instead of taking a $p$-random subset, we select a uniformly random subset of fixed cardinality $m$. This allows us to ask much more precise questions, such as the following:


\begin{question}\label{qu:max}
Let $A$ be a subset of $[n]^2$ chosen uniformly at random from all percolating subsets of $[n]^2$ of size $m$. For which $m\geq n$ is the expected percolation time of $A$ maximized?
\end{question}

It is tempting to conjecture that the maximum should occur at $m=n$. (Note that $m=n$ is the minimum size of a percolating set.) However, recent results of Benevides and Przykucki \cite{BenPrz,BenPrz2} show that, somewhat surprisingly, in the deterministic setting, the maximum percolation time of \emph{any} percolating set, which is $\Theta(n^2)$, is achieved with an initial configuration of $cn$ sites, for some $c>1$. The question is thus to determine whether a similar phenomenon occurs \emph{on average}.

\section{Appendix}\label{se:app}

\begin{lemma}\label{le:calc}
Let $a,b,c,h$ be non-negative reals satisfying $a+\sigma b+\gamma c\geq h$ and $c\leq hp/\gamma$. Then
\[
\left(\frac{\epsilon h}{a+b+c}\right)^{a+b+c} p^{b+2c} \leq e^{-(a+b+c)},
\]
provided $p$ and $\epsilon$ are each at most absolute constants.
\end{lemma}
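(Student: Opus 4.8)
The plan is to take logarithms and reduce everything to a single inequality. Writing $s := a+b+c$ and assuming without loss of generality that $h>0$ and $s>0$ (if $h=0$ the left-hand side is $0$, and if $s=0$ the hypothesis $a+\sigma b+\gamma c\geq h$ forces $h=0$), the claim is equivalent, after multiplying through by $e^s$ and taking logarithms, to
\[
s\log\frac{\epsilon e h}{s} + (b+2c)\log p \leq 0.
\]
Since $p$ is bounded above by an absolute constant strictly less than $1$, we have $\log p<0$, so the second term equals $-(b+2c)\log(1/p)\leq -b\log(1/p)\leq 0$. The first term is positive only when $s$ is small compared to $h$, so the whole point is to show that a small value of $s$ forces $b$ to be large, using the two hypotheses.

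I would argue by a case split on the size of $s$. If $s\geq \epsilon e h$, then $\log(\epsilon e h/s)\leq 0$, so both terms above are $\leq 0$ and we are done. If instead $s<\epsilon e h$, I would first record the elementary bound $s\log(\epsilon e h/s)\leq \epsilon h$, valid for every $s>0$: as a function of $s$ the left-hand side is maximised at $s=\epsilon h$, where its value is $\epsilon h$. Next, from $a\leq s<\epsilon e h$ together with $a+\sigma b+\gamma c\geq h$ we get $\sigma b+\gamma c> (1-\epsilon e)h\geq h/2$, provided $\epsilon\leq 1/(2e)$. Now the hypothesis $c\leq hp/\gamma$ enters: it gives $\gamma c\leq hp\leq h/4$ once $p$ is below an absolute constant (say $p\leq 1/4$), and hence $\sigma b\geq h/4$, i.e.\ $b\geq h/(4\sigma)$. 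Substituting,
\[
s\log\frac{\epsilon e h}{s} + (b+2c)\log p \leq \epsilon h - \frac{h}{4\sigma}\log\frac1p = h\left(\epsilon - \frac{\log(1/p)}{4\sigma}\right),
\]
which is $\leq 0$ as soon as $\epsilon\leq \log(1/p)/(4\sigma)$; since $\sigma$ is a fixed constant and $p$ is at most a fixed constant, this holds for all sufficiently small absolute $\epsilon$.

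The argument involves no genuine analytic difficulty; the only care needed is the bookkeeping of constants. One fixes $\sigma$ first, then chooses the absolute upper bound for $p$ (e.g.\ $p\leq 1/4$) and for $\epsilon$ (e.g.\ $\epsilon\leq\min\{1/(2e),\,\log 4/(4\sigma)\}$) so that every inequality above goes through. The one conceptual remark — and the reason the hypothesis $c\leq hp/\gamma$ is present — is that it makes the $\gamma c$ contribution to the constraint $a+\sigma b+\gamma c\geq h$ negligible, so that whenever $a$ (and hence $s$) is small, the shortfall must be absorbed by $b$, which is exactly the exponent carrying the small factor $p$. I expect the main obstacle to be purely organisational: threading the case split so that the positive term $s\log(\epsilon e h/s)$ is controlled uniformly by $\epsilon h$ and then dominated by $b\log(1/p)$.
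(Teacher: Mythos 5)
Your proof is correct and takes a genuinely different route from the paper's. The paper treats the left-hand side after taking logarithms as a function $f(a,b,c)$ of three variables, computes the partial derivatives, and argues that the constrained infimum of $f$ over the region $\{a+\sigma b+\gamma c\geq h,\ a,b,c\geq 0,\ c\leq hp/\gamma\}$ is attained on the boundary (either $c=0$ or $a+\sigma b+\gamma c=h$); it then reduces to one-variable problems $g_1(b)$ and $g_2(b,c)$, shows the relevant partial derivatives are monotone under the smallness hypotheses on $p$, and explicitly evaluates at the corner $c=hp/\gamma$. Your argument avoids the calculus altogether: you observe that the only potentially problematic term is $s\log(\epsilon e h/s)$, bound it uniformly by its maximum value $\epsilon h$ (at $s=\epsilon h$), and then use the constraint $a+\sigma b+\gamma c\geq h$ together with $\gamma c\leq hp$ to force $b\gtrsim h/\sigma$ whenever $s<\epsilon e h$, so that the term $-b\log(1/p)$ dominates. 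The case split on $s\geq\epsilon e h$ versus $s<\epsilon e h$ cleanly disposes of the rest. Both proofs ultimately require $\epsilon$ and $p$ to be small relative to fixed quantities involving the constant $\sigma$ (the paper's proof uses $p<e^{-(\sigma-1)}$; yours uses $\epsilon\leq\log(1/p)/(4\sigma)$ and $p\leq 1/4$), which is consistent with $\sigma$ being chosen once and for all as an absolute constant. Your version is arguably the more transparent: the key intuition — that the hypothesis $c\leq hp/\gamma$ makes the $\gamma c$ contribution to the linear constraint negligible, so the slack must fall on $b$, which is the exponent of the small factor $p$ — is made explicit rather than emerging from a boundary analysis of critical points.
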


\begin{proof}
After rearranging and replacing $\epsilon$ by $\epsilon/e$ it is sufficient to show that
\[
f(a,b,c) := (b+2c)\log\left(\frac{1}{p}\right) - (a+b+c)\log\left(\frac{\epsilon h}{a+b+c}\right) \geq 0.
\]
We shall need the partial derivatives of $f$, which are
\begin{align*}
\frac{\partial f}{\partial a} &= 1 - \log\left(\frac{\epsilon h}{a+b+c}\right); \\
\frac{\partial f}{\partial b} &= \log\left(\frac{1}{p}\right) + 1 - \log\left(\frac{\epsilon h}{a+b+c}\right); \\
\frac{\partial f}{\partial c} &= 2\log\left(\frac{1}{p}\right) + 1 - \log\left(\frac{\epsilon h}{a+b+c}\right).
\end{align*}

Define
\[
Q = \inf \big\{ f(a,b,c) : a+\sigma b+\gamma c\geq h, \; a,b,c\geq 0 \big\}.
\]
The infimum exists because each of the partial derivatives is positive if the corresponding variable is sufficiently large, so we may restrict the domain to a compact set. Since $\partial f/\partial c > 0$ for all $c>0$, the infimum is achieved either when $a+\sigma b+\gamma c=h$ or when $c=0$.

First suppose $c=0$. In this case we have $\partial f/\partial b > 0$, so again either $a+\sigma b=h$ or $b=0$. If $b=c=0$ then $a\geq h$ and $f(a,0,0) = a\log (a/\epsilon h)$, which is non-negative if $C\leq 1$. If $c=0$ and $a+\sigma b=h$ then
\[
f(a,b,c) = f(h-\sigma b,b,0) = b\log\left(\frac{1}{p}\right) - \big(h-(\sigma-1)b\big)\log\left(\frac{\epsilon h}{h-(\sigma-1)b}\right) := g_1(b),
\]
say, and in this case
\[
Q = \inf \big\{ g_1(b) : 0\leq b\leq h/\sigma \big\}.
\]
But we have
\[
\frac{\partial g_1}{\partial b} = \log\left(\frac{1}{p}\right) + (\sigma-1)\left(\log\left(\frac{\epsilon h}{h-(\sigma-1)b}\right)-1\right) \geq \log\left(\frac{1}{p}\right)-(\sigma-1) > 0
\]
if $p<e^{-(\sigma-1)}$, so
\[
Q = \inf \big\{ g_1(b) : 0\leq b\leq h/\sigma \big\} = g_1(0) = 0,
\]
so the lemma holds in the case $c=0$.

Now suppose $a+\sigma b+\gamma c=h$. Here we have
\begin{align*}
f(a,b,c) &= f\big(h-\sigma b-\gamma c,b,c\big) \\
&= (b+2c)\log\left(\frac{1}{p}\right) - \big(h-(\sigma-1)b-(\gamma-1)c\big)\log\left(\frac{\epsilon h}{h-(\sigma-1)b-(\gamma-1)c}\right) \\
&:= g_2(b,c),
\end{align*}
say. This time,
\[
Q = \inf \big\{ g_2(b,c) : \sigma b+\gamma c\leq h, \; c\leq hp/\gamma, \; b,c\geq 0 \big\}.
\]
As with $g_1$, the partial derivative $\partial g_2/\partial b$ is strictly positive for all $b$, so
\[
Q = \inf \big\{ g_2(0,c) : 0\leq c\leq hp/\gamma \big\}.
\]
Observe that with $b=0$,
\[
\frac{\partial g_2}{\partial c} = 2\log\left(\frac{1}{p}\right) + (\gamma-1)\left(\log\left(\frac{\epsilon h}{h-(\gamma-1)c}\right)-1\right).
\]
Thus, $g_2(0,c)$ is decreasing when $c=0$, and the only zero of its partial derivative with respect to $c$ occurs at
\[
c_0 = \frac{h}{\gamma-1}\left(1-\exp\left(\frac{2\log 1/p}{\gamma-1}-1\right)\right) = \frac{h}{\gamma-1}\left(1-e^{-(1-o(1))}\right) \gg \frac{hp}{\gamma}.
\]
So $g_2(0,c)$ is decreasing on our entire range of $c$, which implies that $Q = g_2(0,hp/\gamma)$, and hence that
\[
\frac{Q}{h} = \frac{2p}{\gamma}\log\left(\frac{1}{p}\right) - \left(1-(\gamma-1)\frac{p}{\gamma}\right)\log\left(\frac{\epsilon}{1-(\gamma-1)p/\gamma}\right).
\]
Thus, $Q>0$ if $\epsilon<1$ and $p$ is sufficiently small.
\end{proof}

\section*{Acknowledgements}

P.B.~ and B.B.~ were partially supported by NSF grant DMS-1301614 and EU MULTIPLEX grant 317532, B.B.~ additionally by NSF grant DMS-0906634, and P.S.~ by a CNPq postdoctoral bolsa.

The majority of this research was carried out while the authors were visitors at the R\'enyi Institute, Budapest. It was continued while P.S.~ was a visitor at the University of Memphis, and again while all three authors were visitors at Microsoft Research, Redmond. The authors are grateful to the R\'enyi Institute and to MSR, Redmond for their kind hospitality, and P.S.~ is additionally grateful for the hospitality of the University of Memphis.

The authors would like to thank the two anonymous referees for many helpful comments and suggestions.

\bibliographystyle{amsplain}
\bibliography{bprefs}

\end{document}